\pgfplotsset{compat=newest}
\definecolor{link}{rgb}{0.18,0.25,0.63}
\definecolor{myred}{rgb}{0.7,0.25,0.2}
\definecolor{mygray}{rgb}{0.8,0.8,0.8}
\numberwithin{equation}{section}
\newcommand{\balpha}{\boldsymbol{\alpha}}
\newcommand{\cmmnt}[1]{}
\g@addto@macro{\endabstract}{\@setabstract}
\newcommand{\authorfootnotes}{\renewcommand\thefootnote{\@fnsymbol\c@footnote}}%
\begin{document}
\definecolor{link}{rgb}{0,0,0}
\definecolor{mygrey}{rgb}{0.34,0.34,0.34}
\def\blue #1{{\color{blue}#1}}

 \begin{center}
 \large
 % \textbf{ Structure of solutions and asymptotic analysis of image denoising models with infimal convolution of data discrepancy}. \par \bigskip \bigskip
   \textbf{Dualization and Automatic Distributed Parameter Selection of Total Generalized Variation via Bilevel Optimization} \par \bigskip \bigskip
   \normalsize
  \textsc{Michael Hinterm\"uller}\textsuperscript{$\,1,$$2$}, \textsc{Kostas Papafitsoros}\textsuperscript{$\,2$},  \textsc{Carlos N. Rautenberg}\textsuperscript{$\,3$},\\
   \textsc{Hongpeng Sun}\textsuperscript{$\,4$}
\let\thefootnote\relax\footnote{
\textsuperscript{$1$}Humboldt-Universit\"at zu Berlin, Unter den Linden 6, 10999 Berlin, Germany
}
\let\thefootnote\relax\footnote{
\textsuperscript{$2$}Weierstrass Institute for Applied Analysis and Stochastics (WIAS), Mohrenstrasse 39, 10117 Berlin, Germany}

\let\thefootnote\relax\footnote{
\textsuperscript{$3$}Department of Mathematical Sciences, George Mason University, Fairfax, VA 22030, USA}

\let\thefootnote\relax\footnote{
\textsuperscript{$4$}Institute for Mathematical Sciences, Renmin University of China, 100872 Beijing, People's Republic of China}

\let\thefootnote\relax\footnote{
%\hspace{3.2pt}Emails: \href{mailto:Hintermueller@wias-berlin.de}{\nolinkurl{hintermueller@wias-berlin.de}}
%
% \hspace{26pt}\href{mailto: Papafitsoros@wias-berlin.de}{\nolinkurl{papafitsoros@wias-berlin.de}}
% 
% \hspace{26pt}\href{mailto: Rautenberg@wias-berlin.de}{\nolinkurl{crautenb@gmu.edu}}
%
% \hspace{26pt}\href{mailto: hpsun@amss.ac.cn}{\nolinkurl{hpsun@amss.ac.cn}}}
\hspace{3.2pt}Emails: \href{mailto:Hintermueller@wias-berlin.de}{\nolinkurl{hintermueller@wias-berlin.de}},
 \href{mailto: Papafitsoros@wias-berlin.de}{\nolinkurl{papafitsoros@wias-berlin.de}},
 \href{mailto: Rautenberg@wias-berlin.de}{\nolinkurl{crautenb@gmu.edu}},
 \href{mailto: hpsun@amss.ac.cn}{\nolinkurl{hpsun@amss.ac.cn}}
}
\end{center}
\vspace{-0.8cm}

\begin{abstract}
Total Generalized Variation (TGV)  regularization in image reconstruction %an established high quality regularizer for a variety of variational image reconstruction tasks. By incorporating 
relies on an infimal convolution type combination of generalized first- and second-order derivatives. %in the regularization process it 
This helps to avoid the staircasing effect of Total Variation (TV) regularization, while still preserving sharp contrasts in images. The associated regularization effect crucially hinges on two parameters whose proper adjustment represents a challenging task. In this work, a bilevel optimization framework with a suitable statistics-based upper level objective is proposed in order to automatically select these parameters. The framework allows for spatially varying parameters, thus enabling better recovery in high-detail image areas. A rigorous dualization framework is established, and for the numerical solution, two Newton type methods for the solution of the lower level problem, i.e. the image reconstruction problem, and  two bilevel TGV algorithms are introduced, respectively. Denoising tests confirm that automatically selected distributed regularization parameters lead in general to improved reconstructions when compared to results for scalar parameters.
 \end{abstract}

%\tableofcontents

\definecolor{link}{rgb}{0.18,0.25,0.63}

\section{Introduction}
In this work we analyze and implement a bilevel optimization framework for automatically selecting spatially varying regularization parameters $\balpha:=(\alpha_{0}, \alpha_{1})^\top\in C(\overline{\om})^2$, $\balpha>0$, in the following image reconstruction problem:
\begin{equation}\label{weightedTGV_min_intro}
\text{minimize}\quad \frac{1}{2}\int_{\om}(Tu-f)^{2}dx+\tgv_{\balpha}^{2}(u)\quad\text{over }u\in\bv(\om),
\end{equation}
where the second-order Total Generalized Variation (TGV) regularization is given by
 \begin{equation}\label{weighted_TGV_def_intro}
 \begin{split}
\tgv_{\balpha}^{2}(u)=\sup \Big\{\int_{\om} u\,\di^{2} \phi\,dx:\phi\in C_{c}^{\infty}(\om,\mathcal{S}^{d\times d}),\; & |\phi(x)|_{r}\le \alpha_{0}(x), \\
& |\di\phi(x)|_{r}\le \alpha_{1}(x), \text{ for all } x\in\om  \Big\}.
\end{split}
\end{equation}
Here, $\om\subseteq \RR^{d}$ is a bounded, open image domain with Lipschitz boundary,  $\mathcal{S}^{d\times d}$ denotes the space of $d\times d$ symmetric matrices,  $T: L^{d/d-1}(\om)\to L^{2}(\om)$ is a bounded linear (output) operator, and $f$ denotes given data which satisfies
\begin{equation}
f=Tu_{true}+\eta.
\end{equation}
In this context, $\eta$ models a highly oscillatory (random) component with zero mean and known quadratic deviation (variance) $\sigma^{2}$ from the mean.  Further, $L^2(\om)$ and $L^{d/d-1}(\om)$ denote standard Lebesgue spaces \cite{Adams}, and $|\cdot|_r$, $1\leq r\leq+\infty$, represents the $\ell^r$ vector norm or its associated matrix norm. The space of infinitely differentiable functions with compact support in $\om$ and values in $\mathcal{S}^{d\times d}$ is denoted by $C_{c}^{\infty}(\om,\mathcal{S}^{d\times d})$. Further, we refer to Section \ref{sec:weighted_TV_functional} for the definition of the first- and second-order divergences $\operatorname{div}$ and $\operatorname{div}^2$, respectively.
 
Originally, the TGV functional was introduced for scalar parameters $\alpha_{0}, \alpha_{1}>0$ only; see \cite{TGV}. It serves as a higher order extension of the well-known Total Variation (TV) regularizer \cite{ChambolleLions, rudin1992nonlinear}, preserves edges (i.e., sharp contrast) \cite{Papafitsoros_Bredies, valkonen_jump_2}, and promotes piecewise affine reconstructions while avoiding the often adverse staircasing effect (i.e., piecewise constant structures) of TV \cite{poon_TV_geometric, Jalalzai2015jmiv, ring2000structural}; see Figure \ref{fig:tv_tgv_comp} for an illustration.
\begin{figure}[t!]
	%\centering
	\begin{subfigure}[t]{0.20\textwidth}\centering
	\includegraphics[height=3.05cm]{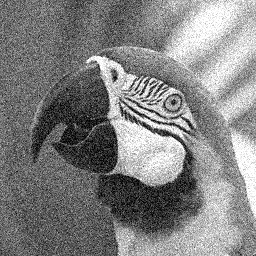}\\% BEST TV PSNR
	\tiny{Noisy}
	\end{subfigure}
	\begin{subfigure}[t]{0.20\textwidth}\centering
	\includegraphics[height=3.05cm]{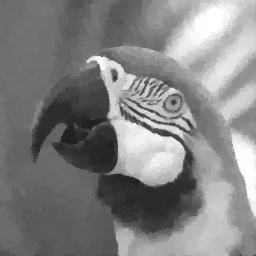}\\
	\tiny{TV}
	\end{subfigure} 
	\begin{subfigure}[t]{0.20\textwidth}\centering
	\includegraphics[height=3.05cm]{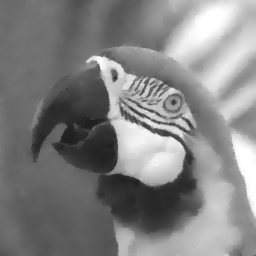}\\% BEST TGV PSNR
	\tiny{ TGV}
	\end{subfigure}
	\caption{Gaussian denoising: Typical difference between TV  (piecewise constant) and TGV reconstructions (piecewise affine)}
	\label{fig:tv_tgv_comp}
\end{figure}
These properties of TGV have made it a successful regularizer in variational image restoration for a variety of applications \cite{bredies2012artifact,   bredies2013tgv, TGV_decompression_part1, TGV_decompression_part2,TGV, BredValk, knoll2011second, diff_tens}. Extensions to manifold-valued data, multimodal and dynamic problems \cite{TGV_dynamic_PET, TGV_manifold, TGV_video, TGV_electron, TGV_MR_PET, TGV_dynamic_MRI} %\cite{tgvcolour, bredies2012artifact,   bredies2013tgv, TGV_decompression_part1, TGV_decompression_part2,TGV, BredValk, knoll2011second, diff_tens}. Extensions to manifold-valued data, multimodal and dynamic problems \cite{TGV_dynamic_PET, TGV_manifold, TGV_video, TGV_electron, TGV_MR_PET, TGV_dynamic_MRI} 
have been proposed, as well. In all of these works, the choice of the scalar parameters $\alpha_{0}, \alpha_{1}$ is  made ``manually'' via a direct grid search. Alternatively, selection schemes relying on a known ground truth $u_{true}$ have been studied; see \cite{bilevellearning, DELOSREYES2016464, TGV_learning2}. The latter approach, however, is primarily of interest when investigating the mere capabilities of TGV regularization.

While there exist automated parameter choice rules for TV regularization, see for instance \cite{hintermuellerPartI} and the references therein, analogous techniques and results for the TGV parameters are very scarce. One of the very few contributions is \cite{TGV_weightedfid} where, however, a spatially varying fidelity weight rather then regularization parameter is computed. Compared to the choice of the regularization weight in TV-based models, the infimal convolution type regularization incorporated into the TGV functional significantly complicates the selection; compare the equivalent definition \eqref{TGV_definition_min} below. Further difficulties arise when these parameters are spatially varying as in \eqref{weighted_TGV_def_intro}. In that case, by appropriately choosing $\balpha=(\alpha_0,\alpha_1)^\top$, one wishes to smoothen homogeneous areas in the image while preserving fine scale details. The overall target is then to not only  select the parameters in order to reduce noise while avoiding oversmoothing, as in the TV case, but also to ensure that the interplay of $\alpha_0$ and $\alpha_1$ will not produce any staircasing.  %In that case the regularization parameters need to adapt to the specific image itself.

% The aim would be here, not only to select them in order for the noise to be reduced while oversmoothing is avoided, as it is in the case of TV, but also to ensure that their combined values will not produce any staircasing. We mention here the works \cite{TGV_learning2, DELOSREYES2016464} where the authors, with means of a bilevel scheme and a training set, learn the optimal parameters for Gaussian denoising with respect to different image quality measures. While this approach is suitable for denoising images of the same noise level with the training set, these training sets, being pairs of ground truths and noisy versions are not always available. Further challenges arise, when one wishes these parameters to be spatially distributed, in order to smooth out large homogeneous areas in the image while preserving fine scale details. In that case the regularization parameters need to adapt to the specific image itself.

For this delicate selection task and inspired by  \cite{hintermuellerPartI, hintermuellerPartII} for TV, in this work we propose a bilevel minimization framework for an automated selection of $\balpha$ in the TGV case. Formally, the setting can be characterized as follows: 
  \begin{equation}\label{bilevel_framework}\left \{
 \begin{aligned}
 &\text{\emph{minimize} a  statistics-based (upper level) objective over } (u,\balpha)\\
  %&\text{\emph{over }} u \text{ and regularization parameters, }  (\alpha_{0},\alpha_{1}),\\[2pt]
  &\text{\emph{subject to }} u \text{ solving \eqref{weightedTGV_min_intro} for a regularization weight } \balpha=(\alpha_{0},\alpha_{1}).
 \end{aligned}\right.
\end{equation}
Note here that the optimization variable $\balpha$ enters the lower level minimization problem \eqref{weightedTGV_min_intro} as a parameter, thus giving rise to $u=u(\balpha)$.
We also mention that this optimization format falls into the general framework which is discussed in our review paper \cite{bilevel_handbook} where the general opportunities and mathematical as well as algorithmic aspects of bilevel optimization in generating structured non-smooth regularization functionals are discussed in detail.

As our statisical set-up parallels the one in \cite{hintermuellerPartI, hintermuellerPartII}, here we resort to the upper level objective proposed in that work. It is based on localized residuals $\mathrm{R}: L^{d/d-1}(\om)\to L^{\infty}(\om)$ with
\begin{equation}\label{loc_res_intro}
\mathrm{R}u(x)=\int_{\om}w(x,y)(Tu-f)^{2}(y)\,dy,
\end{equation}
where $w\in L^{\infty}(\om\times \om)$ with $\int_{\om}\int_{\om}w(x,y)dxdy=1$.
Note that $\mathrm{R}u(x)$ can be interpreted as a local variance keeping in mind that, assuming Gaussian noise of variance $\sigma^{2}$, we have that $\int_{\om}(Tu_{true}-f)^{2}\,dx=\int_{\om}\eta^{2}\,dx=\sigma^{2}|\om|$.  Consequently, if a reconstructed image $u$ is close to $u_{true}$ then it is expected that for every $x\in\om$ the value of $\mathrm{R}u(x)$ will be close to $\sigma^{2}$. Hence it is natural to consider an upper level objective which aims to approximately keep $\mathrm{R}u$ within a corridor $\underline{\sigma}^{2}\leq\sigma^{2}\leq\overline{\sigma}^{2}$ with positive bounds $\underline{\sigma}^{2}$, $\overline{\sigma}^{2}$. This can be achieved by minimizing $F:L^{2}(\om)\to \RR$ with
\begin{equation}\label{upper_level_intro}
F(v):=\frac{1}{2} \int_{\om} \max (v-\overline{\sigma}^{2},0)^{2}dx+\frac{1}{2} \int_{\om} \min (v-\underline{\sigma}^{2},0)^{2}dx.
\end{equation}
%Hence by minimizing the term $F(\mathrm{R}u)$ one aims to force the value of $\mathrm{R}u$ to be close to the variance $\sigma^{2}$ which for our purposes will be considered a known quantity. 
The function $F(\mathrm{R}\cdot)$ is indeed suitable as an upper level objective. This is demonstrated in Figure \ref{fig:psnr_ul_comp}, where we show (in the middle and right plots) the objective values for a series of scalar TGV denoising results and for a variety of parameters $(\alpha_{0},\alpha_{1})$ for the image depicted on the left. Regarding the choices of $\underline{\sigma}, \overline{\sigma}, w$ we refer to Section \ref{sec:numerics}.
  \begin{figure}
 \begin{minipage}[c]{0.2\textwidth}\vspace{1em}
	\begin{minipage}[c]{0.95\textwidth}
	 \includegraphics[width=0.65\textwidth]{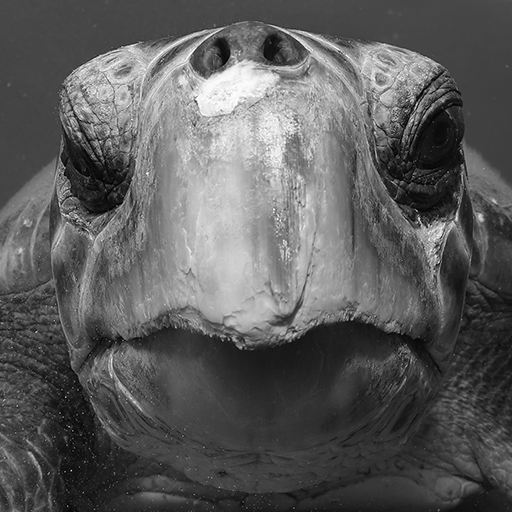}\\[-3pt]\small{clean image}
	\end{minipage}
	
	\begin{minipage}[c]{0.95\textwidth}
	 \includegraphics[width=0.65\textwidth]{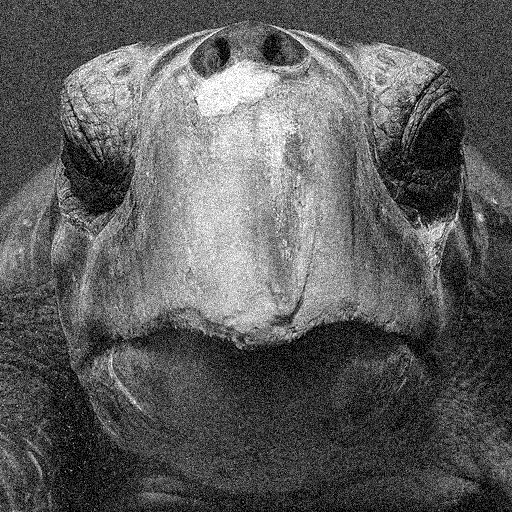}\\[-3pt]\small{noisy image}
	\end{minipage}
\end{minipage}
\begin{minipage}[c]{0.65\textwidth}
\vspace{10pt}
\resizebox{0.49\textwidth}{!}{
\input{psnr_tgv_aniso_v3.tex}}
\resizebox{0.49\textwidth}{!}{
\input{upperlevel_tgv_aniso.tex}}
\end{minipage}
\caption{Suitability of the functional $F( \mathrm{R}\cdot)$ as an upper level objective. Evaluation of $F( \mathrm{R}u)$ where $u$ solves the  TGV  denoising problem \eqref{weightedTGV_min_intro} ($T=Id$), for a variety of  scalar  parameters $(\alpha_{0},\alpha_{1})$ %The functional is minimized  close to the points that maximize the PSNR. The minimum (for $F( \mathrm{R}\cdot)$) and maximum (for PSNR) points are denoted with a bullet.
%We used the values $\underline{\sigma}=0.00798$, $\overline{\sigma}=0.01202$ and $w$ a spatially invariant averaging filter of size $7\times 7$ in the formulas for \eqref{loc_res_intro} and \eqref{upper_level_intro}
}
\label{fig:psnr_ul_comp}		
\end{figure}
Upon inspection of Figure \ref{fig:psnr_ul_comp} we find that the functional $F(\mathrm{R}\cdot)$ is minimized for a pair of scalar parameters $(\alpha_{0},\alpha_{1})$ that is close to the one maximizing the peak-signal-to-noise-ratio (PSNR). Note, however, that in order to truly optimize the PSNR, one would need the ground truth image $u_{true}$, which is course typically not available. In contrast to this, we emphasize that $F(\mathrm{R}\cdot)$ does {\it not} involve any ground truth information. Rather, it only relies on statistical properties of the noise. 

%In order to fully take advantage of the concept of localized residuals, one must allow the regularization parameters to be (non-constant) functions, an approach that will follow here. Hence the specific form of the image reconstruction problem \eqref{variational_intro} which we will consider  is 
%\begin{equation}\label{weightedTGV_min_intro}
%\min_{u\in\bv(\om)} \frac{1}{2}\int_{\om}(Tu-f)^{2}dx+\tgv_{\balpha}^{2}(u)
%\end{equation}
%where 
% \begin{equation}\label{weighted_TGV_def_intro}
%\tgv_{\balpha}^{2}(u)=\sup \left \{\int_{\om} u\,\di^{2} \phi\,dx:\phi\in C_{c}^{\infty}(\om,\mathcal{S}^{d\times d}),\; |\phi(x)|_{r}\le \alpha_{0}(x),\; |\di\phi(x)|_{r}\le \alpha_{1}(x), \text { for all }x\in\om  \right\}
%\end{equation}
%where $\alpha_{0},\alpha_{1}\in C(\overline{\om})$ are bounded away from zero.
%Here $|\cdot|_{r}$ denotes the standard finite dimensional norm, see the next section for details.

For analytical and numerical reasons, rather than having \eqref{weightedTGV_min_intro} as the lower level problem for the bilevel minimization framework \eqref{bilevel_framework}, we
use its Fenchel predual. This yields a bilevel problem which is expressed in terms of dual variables and is equivalent to the one stated in terms of the primal variable $u$. A similar approach was taken in \cite{hintermuellerPartI, hintermuellerPartII} for TV models.
%and substitute it by its corresponding Fenchel predual and in particular a regularized version of it, see Sections \ref{sec:dual_TGV_problem} and \ref{sec:regularized_problems} for more details. 
In this way, one has to treat a more amenable variational inequality of the first kind rather than one of second kind in the primal setting in the constraint system of the resulting bilevel optimization problem. Numerically, one may then utilize very efficient and resolution independent, function space based solution algorithms, like (inexact) semismooth Newton methods \cite{hint_kun}. 
The other option that will also consider here, is to minimize the upper level objective subject to the primal-dual optimality conditions, for which Newton methods can also be applied for their solution, 
see for instance \cite{stadler} for an inexact semismooth Newton solver which operates on the primal-dual optimality conditions for TV regularization.

%The reason for this, is that as the predual problem becomes a constraint of the overall minimization problem, we can handle a problem of obstacle type resulting on a variational inequality of the first kind, instead of the second kind representing the first order optimality condition of the primal problem \eqref{weightedTGV_min_intro}. Moreover as in the case of TV,  the predual problem  is more amenable to efficient, resolution independent, function space solution algorithms, like semismooth Newton methods \cite{hint_kun}, that can solve the problem up to a very high accuracy, in a relatively short amount of time. The predual approach was the one that was also adopted in \cite{hintermuellerPartI, hintermuellerPartII}.
% The other option that will also consider here, is to minimize the upper level objective subject to the primal-dual optimality conditions, for which Newton methods can also be applied for their solution, see for instance \cite{stadler} for the TV case. In both cases, rigorous dualization frameworks need to be established. 
 
Summarizing, this work provides not only a user-friendly and novel hierarchical variational framework for automatic selection of the TGV regularization parameters, but by making these parameters spatially dependent it leads to an overall performance improvement; compare, e.g., the results in Section \ref{sec:numerics}. 

\subsection*{The structure of the paper}
Basic facts on the TGV functional with spacially varying parameters along with functional analytic foundations needed for (pre)dualization are the subjects of Section \ref{sec:weighted_TV_functional}.
%, we recall some basic facts about the TGV functional. We are particularly interested in the role of the parameters at least in the scalar case and their asymptotics. We also remind the reader of the definition and the basic properties of the space $W_{0}^{q}(\di^{2};\om)$ which plays a central role in the dualization. In particular we show that, as in the scalar case, the weighted  $\tgv$ functional can be defined equivalently as in \eqref{weighted_TGV_def_intro} with the dual variables belonging to $W_{0}^{q}(\di^{2};\om)$ instead of $C_{c}^{\infty}(\om,\mathcal{S}^{d\times d})$, something that facilitates the dualization. The proofs here are simple adaptations of the corresponding results for the scalar case.
 %Having established this result, in  
Section \ref{sec:dual_TGV_problem} is concerned with the derivation of the predual problem of \eqref{weightedTGV_min_intro} and the corresponding primal-dual optimality conditions.
 Regularized versions of the primal problem \eqref{weightedTGV_min_intro} and its predual are in the focus of Section \ref{sec:regularized_problems}. %, we introduce a series of regularized problems both for the primal problem \eqref{weightedTGV_min_intro} and its predual, and we derive the corresponding 
Besides respective primal-dual optimality conditions, we study the asymptotic behavior of these problems and their associated solutions under vanishing regularization. It is also argued that every regularized instance can be solved efficiently by employing an (inexact) semismooth Newton method. 
 %
% The regularization of the primal problem consists of $H^{1}$ smoothings of the primal variables as well as Huberization of the corresponding Radon norms, in the spirit of \cite{stadler}. 
%We show convergence of the regularized problem to the original one in a suitable sense. The  regularization of the predual problem, consists of a higher order smoothing of the dual variable as well as a Moreau-Yosida penalization of the corresponding box constraints for that variable and its divergence. This makes the regularized predual problem amenable to be solved using  Newton methods. 
 %
 Section \ref{sec:Bilevel} introduces two bilevel TGV problems %, \emph{Bilevel dual}  and  \emph{Bilevel primal-dual}, where 
 for which the first-order optimality conditions of the predual problem and the first-order primal-dual optimality conditions serve as constraints, respectively. For these problems, based on Karush-Kuhn-Tucker theory in Banach space associated first-order optimality conditions are derived. %We also discuss two Newton-type methods for the solutions of these two sets of optimality conditions as well as aspects of their discretization.
 The numerical solution of the proposed bilevel problems is the subject of Section \ref{sec:numerics}. Finally, the paper ends by a report on extensive numerical tests along with conclusions drawn from theses computational results.
 
% , we  describe the  algorithms that solve the two bilevel TGV problems, based on a projected gradient descent scheme as it was done for TV in \cite{ hintermuellerPartII} and we provide numerical examples for the two versions of bilevel TGV  algorithms in the case of image denoising.

\section{The dual form of the weighted TGV functional}\label{sec:weighted_TV_functional}
\subsection{Total Generalized Variation}
We recall here some basic facts about the TGV functional \eqref{weighted_TGV_def_intro} with constant parameters $\alpha_0,\alpha_1$ and assume throughout that the reader is familiar with the basic concepts of functions of bounded variation (BV); see \cite{AmbrosioBV} for a detailed account.
%We write once  again the definition of TGV for scalar parameters
% \begin{equation}\label{TGV_definition}
% \tgv_{\balpha}^{2}(u)=\sup \left \{\int_{\om} u\,\di^{2} \phi\,dx:\phi\in C_{c}^{\infty}(\om,\mathcal{S}^{d\times d}),\; \|\phi\|_{2,\infty}\le \alpha_{0},\; \|\di\phi\|_{2,\infty}\le \alpha_{1} \right\}.
% \end{equation}
%
For a function  $\phi\in C_{c}^{\infty}(\om,\mathcal{S}^{d\times d})$ the first- and second-order divergences are respectively given by
\[(\di \phi)_{i}=\sum_{j=1}^{d} \frac{\partial \phi_{ij}}{\partial x_{j}}, \;i=1,\ldots,d,\quad \text{and}\quad \di^{2}\phi=\sum_{i=1}^{d}\frac{\partial ^{2} \phi_{ii}}{\partial x_{i}^{2}} +2\sum_{i<j} \frac{\partial ^{2} \phi_{ij}}{\partial x_{i} \partial x_{j}}. \]
When $r=2$ in \eqref{weighted_TGV_def_intro} then we obtain the isotropic version of the TGV functional; otherwise the functional is anisotropic. Among all anisotropic versions, $r=+\infty$ is of particular interest to us, primarily for computational reasons.
  
%Similarly to TV, the anisotropic version of TGV can be defined by substituting the finite dimensional norms $\|\cdot\|_{2,\infty}$ by the corresponding $\|\cdot\|_{\infty,\infty}$. 
%%see for instance Figure \ref{fig:tv_tgv_square} for some comparison.

In \cite{BredValk} it was shown that a function $u\in L^{1}(\om)$ has finite $\tgv$ value if and only if it belongs to $\bv(\om)$. Here $\bv(\om)$ denotes the Banach space of function of bounded variation over $\om$ with associated norm $\|\cdot\|_{\bv(\om)}$. Moreover,
 the bounded generalized variation norm $\|\cdot\|_{\mathrm{BGV}}:=\|\cdot\|_{L^{1}(\om)}+\tgv_{\balpha}^{2}(\cdot)$ is equivalent to $\|\cdot\|_{\bv(\om)}$. Similarly to TV, TGV is a convex functional which is lower semicontinuous with respect to the strong $L^{1}$ convergence. In \cite{bredies2014regularization, BredValk} it is demonstrated that the TGV functional can be equivalently written as 
 \begin{equation}\label{TGV_definition_min}
\tgv_{\balpha}^{2}(u)=\min_{w\in\mathrm{BD}(\om)} \alpha_{1} |Du-w|(\om) +\alpha_{0}|\mathcal{E}w|(\om),
\end{equation}
%\begin{figure}[t!]
%	%\centering
%	\begin{subfigure}[t]{0.20\textwidth}\centering
%	\includegraphics[height=3.05cm]{figures/square/noisy.png}\\% BEST TV PSNR
%	\tiny{Noisy}
%	\end{subfigure}
%	\begin{subfigure}[t]{0.20\textwidth}\centering
%	\includegraphics[height=3.05cm]{figures//square/TGV_01_026_aniso.png}\\
%	\tiny{Anisotropic TGV}
%	\end{subfigure} 
%	\begin{subfigure}[t]{0.20\textwidth}\centering
%	\includegraphics[height=3.05cm]{figures/square/TGV_012_042_iso.png}\\% BEST TGV PSNR
%	\tiny{Isotropic TGV}
%	\end{subfigure}
%	\caption{Comparison between  the isotropic and anisotropic versions of TGV. The (scalar) regularization parameters have been manually adjusted for optimal SSIM}
%	\label{fig:tv_tgv_square}
%\end{figure}
where $\mathrm{BD}(\om)$ is the space of functions of bounded deformation, with $\mathcal{E}$ denoting the distributional symmetrized gradient \cite{bd}.
The asymptotical behavior of  the TGV model in image restoration with respect to  scalars $\alpha_{0},\alpha_{1}$ was studied in \cite{tgv_asymptotic}; see also in \cite{valkonen_jump_2}. %\cite{bredies2014regularization, valkonen_jump_2}. 
For instance, when  $T=Id$ and either $\alpha_{0}$ or $\alpha_{1}$ converges to zero, then the corresponding solutions of \eqref{weightedTGV_min_intro} converge (weakly$^\ast$ in $\bv(\om)$) to $f$. When both of the parameters are sent to infinity, then the solutions converge weakly$^\ast$ to the $L^{2}$-linear regression solution for $f$.
We further note that the set of affine functions constitutes the kernel of the TGV functional.

There exist combinations of $\alpha_{0},\alpha_{1}$ such that $\tgv_{\balpha}(u)=\alpha_{1}\tv(u)$. This happens for specific functions $u$, and in general one can show that there exists a constant $C>0$ such that if $\alpha_{0}/\alpha_{1}>C$, then the $\tgv$ value does not depend on $\alpha_{0}$ and, up to an affine correction, it is equivalent to $\tv$. In that case the reconstructed images still suffer from a kind of (affine) staircasing effect  \cite{tgv_asymptotic}.
%We summarize in the following, see also \cite{Papafitsoros_Bredies, tgv_asymptotic}.
%
% \newtheorem{Wdiv}{Theorem}[section]
% \begin{Wdiv}\label{lbl:Wdiv}
%There exists a constant $C$ depending only on the domain $\om$ such that if with $\balpha=(\alpha_{0},\alpha_{1})$ are such that  $\alpha_{0}/\alpha_{1}>C$ then
%\begin{equation}\label{TGV_almostTV}
%\tgv_{\balpha}^{2}(u)= \alpha_{1} |Du-m_{\mathcal{E}}(\nabla u)|(\om),\quad \text{for all }u\in \bv(\om),  
%\end{equation}
%where for a function $g\in L^{1}(\om,\RR^{d})$ we define
%\begin{equation}\label{mE}
%m_{\mathcal{E}}(g):=\underset{w\in \mathrm{Ker}\mathcal{E}}{\operatorname{argmin}}\; \|g-w\|_{L^{1}(\om,\RR^{d})}.
%\end{equation}
%\end{Wdiv}
%Note here that the kernel of the symmerized gradient $ \mathrm{Ker}\mathcal{E}$ consists exactly of all the functions of the form $r(x)=Ax+b$ where $b\in \RR^{d}$ and $A\in \RR^{d\times d}$ is a skew symmetric matrix. Theorem \ref{lbl:Wdiv} thus says that for a large ratio $\alpha_{0}/\alpha_{1}$ $\tgv$ is almost equivalent to $\tv$ up to an affine correction. In practice this means that for such a combination of parameters the reconstructed images still suffer from a type of (affine) staircasing effect, see \cite{tgv_asymptotic}. 
	
The fine structure of TGV reconstructions has been studied analytically mainly in dimension one in \cite{TGVbregman, BrediesL1, Papafitsoros_Bredies, poschl2013exact}. Under some additional regularity assumptions (compare \cite{valkonen_jump_2}) it can be shown that for TGV denoising the jump set of the solution is essentially contained in the jump set of the data; see \cite{caselles2007discontinuity} for the TV case.
%, i.e., whether the jump set of the solution is essentially contained in the jump set of the data, also holds for TGV denoising is rather unknown but nevertheless it holds 
\subsection{The space $W_{0}^{q}(\di^{2};\om)$}
Next we introduce several function spaces which will be useful in our subsequent development.
\newtheorem{Wdiv}{Definition}[section]
%\begin{Wdiv}\label{lbl:Wdiv}
For this purpose, let $1\le q\le \infty$ and $p\in L^{q}(\om,\RR^{d})$. Recall that $\di p\in L^{q}(\om)$ if there exists $w\in L^{q}(\om)$ such that
\[\int_{\om} \nabla \phi \cdot p\,dx = -\int_{\om}\phi w\,dx, \quad \text{for all }\phi\in C_{c}^{\infty}(\om).\]
Based on this first-order divergence, we define the Banach space
\[W^{q}(\di;\om):= \left \{p\in L^{q}(\om,\RR^{d}):\;\di p\in L^{q}(\om) \right \},\]
endowed with the norm $\|p\|_{W^{q}(\di;\om)}^{q}:=\|p\|_{L^{q}(\om,\RR^{d})}^{q}+\|\di p\|_{L^{q}(\om)}^{q}$.  Similarly one obtains the Banach space $W^{q}(\di^{2};\om)$ as the space of all functions $p\in L^{q}(\om,\mathcal{S}^{d\times d})$ whose first- and second-order divergences, $\di p$ and $\di^{2} p$, respectively, belong to $L^{q}(\om)$. Note that $\di^{2}p\in L^{q}(\om)$ if there exists a function $v\in L^{q}(\om)$ such that
\[\int_{\om} \nabla \phi\cdot \di p\,dx=-\int_{\om} \phi v\,dx, \quad \text{for all }\phi\in C_{c}^{\infty}(\om).\]
This space is equipped with the norm $\|p\|_{W^{q}(\di^{2};\om)}^{q}:=\|p\|_{L^{q}(\om)}^{q}+\|\di p\|_{L^{q}(\om,\RR^{d})}^{q}+\|\di^{2} p\|_{L^{q}(\om)}^{q}$.
%\end{Wdiv}
%
%It can be shown using density of $C_{c}^{\infty}$ functions in $L^{q}$ that the first and second order divergences are unique. Moreover both spaces above are Banach under these norms. 
We refer to \cite{TGV_decompression_part1} for a more general definition of these spaces. Note that when $q=2$ these spaces are Hilbertian and then the standard notation is $H(\di;\om)$ and $H(\di^{2};\om)$; see \cite{Girault}. The Banach spaces $W_{0}^{q}(\di;\om)$ and $W_{0}^{q}(\di^{2};\om)$ are defined as 
%\begin{align*}
\[
W_{0}^{q}(\di;\om)= \overline{C_{c}^{\infty}(\om,\RR^{d})}^{\|\cdot\|_{W^{q}(\di;\om)}},\qquad
W_{0}^{q}(\di^{2};\om)= \overline{C_{c}^{\infty}(\om,\mathcal{S}^{d\times d})}^{\|\cdot\|_{W^{q}(\di^{2};\om)}}.
\]
%\end{align*}
Using the definitions above, the following integration by parts formulae hold true:
\begin{align}
\int_{\om} \nabla \phi\cdot p\,dx&=-\int_{\om} \phi \, \di p \, dx,\quad \text{for all }p\in W_{0}^{q}(\di;\om),\; \phi\in C^{\infty}(\overline{\om},\RR),\label{ibp_1}\\
\int_{\om} E \phi\cdot p\,dx&=-\int_{\om} \phi \cdot \, \di p \, dx,\quad \text{for all }p\in W_{0}^{q}(\di^{2};\om),\; \phi\in C^{\infty}(\overline{\om},\RR^{d}),\label{ibp_2}\\
\int_{\om} \nabla \phi\cdot \di p\,dx&=-\int_{\om} \phi  \, \di^{2} p \, dx,\quad \text{for all }p\in W_{0}^{q}(\di^{2};\om),\; \phi\in C^{\infty}(\overline{\om},\RR),\label{ibp_3}
\end{align}
with $E\phi$ denoting the symmetrized gradient of $\phi$.

\subsection{Weighted TGV}
Throughout the remainder of ths work we use the weighted TGV functional \eqref{weighted_TGV_def_intro} with
%
%We define the weighted TGV functional as follows: 
%\begin{equation}\label{weighted_TGV_def}
%\tgv_{\balpha}^{2}(u)=\sup \left \{\int_{\om} u\,\di^{2} \phi\,dx:\phi\in C_{c}^{\infty}(\om,\mathcal{S}^{d\times d}),\; |\phi(x)|_{r}\le \alpha_{0}(x),\; |\di\phi(x)|_{r}\le \alpha_{1}(x), \text { for all }x\in\om  \right\}.
%\end{equation}
%Here  $\balpha=(\alpha_{0},\alpha_{1})$ with 
$\alpha_{0},\alpha_{1}\in C(\overline{\Omega})$ and $\alpha_{0}(x),\alpha_{1}(x)>\underline{\alpha}>0$, $\underline{\alpha}\in \RR$, $x\in\om$.  %Note that 
Concerning $|\cdot|_{r}$ %denotes the finite dimensional $r$-norm for $1\le r\le \infty$, and 
let $r^{\ast}$ with $1/r+1/r^{\ast}=1$ and the obvious definitions for $r=1,\infty$.

We will show that the space $C_{c}^{\infty}(\om,\mathcal{S}^{d\times d})$ in \eqref{weighted_TGV_def_intro} can be substituted by  $W_{0}^{d}(\di^{2};\om)$. This fact will be instrumental when deriving the predual of the TGV minimization problem.
For this we need  the following result, which involves the Banach space of functions of bounded deformation here denoted by $\mathrm{BD}(\om)$; see, e.g., \cite{temam1985mathematical} for more details.
\newtheorem{prop_weighted_TGV_min}[Wdiv]{Proposition}
\begin{prop_weighted_TGV_min}\label{lbl:prop_weighted_TGV_min}
Then weighted $\tgv_{\balpha}^{2}$ functional \eqref{weighted_TGV_def_intro} admits the equivalent expression
\begin{equation}\label{weighted_TGV_min}
\tgv_{\balpha}^{2}(u)=\min_{w\in\mathrm{BD}(\om)} \int_{\om} \alpha_{1}\,d |Du-w|_{r^{\ast}}+ \int_{\om}\alpha_{0}\, d|\mathcal{E}w|_{r^{\ast}}.
\end{equation}
\end{prop_weighted_TGV_min}
\begin{proof}
The proof is analogous to the one for the scalar TGV functional; see for instance \cite[Proposition 2.8]{TGV_decompression_part1} or \cite[Theorem 3.5]{bredies2014regularization}. Here, we highlight only the significant steps. Indeed, given $u\in L^{1}(\om)$, the idea is to define
\begin{align*}
U&=C_{0}^{1}(\om,\RR^{d})\times C_{0}^{2}(\om;\mathcal{S}^{d\times d}),\quad V=C_{0}^{1}(\om,\RR^{d}),\\
\Lambda&:U\to V,\quad \Lambda (u_{1},u_{2})=-u_{1}-\di u_{2},\\
F_{1}&: U\to \overline{\RR},\quad F_{1}(u_{1},u_{2})=-\int_{\om} u\,\di u_{1}+ \mathcal{I}_{\{|\cdot(x)|_{r}\le \alpha_{1}(x)\}}(u_{1})+ \mathcal{I}_{\{|\cdot(x)|_{r}\le \alpha_{0}(x)\}}(u_{2}),\\
F_{2}&: V\to \overline{\RR},\quad F_{2}(v)=\mathcal{I}_{\{0\}}(v).
\end{align*}
Now, after realizing that 
\begin{equation}\label{primal_weighted_TGV}
\tgv_{\balpha}^{2}(u)=\sup_{(u_{1},u_{2})\in U} - F_{1}(u_{1},u_{2})-F_{2}(\Lambda (u_{1},u_{2})),
\end{equation}
the proof proceeds by next showing that the dual problem of \eqref{primal_weighted_TGV} is equivalent to \eqref{weighted_TGV_min} and then applying the Fenchel duality result \cite{ekeland1999convex}. The only subtle point is the following density result which is required in order to show that \eqref{primal_weighted_TGV} is indeed equal to \eqref{weighted_TGV_def_intro}. In fact, it suffices to show that 
\begin{equation}\label{density_Cc_C0_weighted}
\begin{aligned}
&\overline{\left \{\phi\in C_{c}^{\infty}(\om,\mathcal{S}^{d\times d}):\; |\phi(x)|_{r}\le \alpha_{0}(x),\; |\di\phi(x)|_{r}\le \alpha_{1}(x), \text { for all }x\in\om \right\}}^{\|\cdot\|_{C_{0}^{2}}}\\
&=\left \{\psi\in C_{0}^{2}(\om,\mathcal{S}^{d\times d}):\; |\psi(x)|_{r}\le \alpha_{0}(x),\; |\di\psi(x)|_{r}\le \alpha_{1}(x), \text { for all }x\in\om \right\}.
\end{aligned}
\end{equation}
Indeed let $\psi$ belong to the second set in \eqref{density_Cc_C0_weighted}, and let $\epsilon>0$. Choose $0<\lambda_{\epsilon}<1$ such that 
\begin{equation}\label{lambdaeps_psi}
\|\psi-\lambda_{\epsilon}\psi\|_{C_{0}^{2}}<\epsilon/2.
\end{equation}
 Since $\alpha_{0}$ and $\alpha_{1}$ are continuous and bounded away from zero there exists $\alpha_{\epsilon}>0$,  smaller than the minimum of $\alpha_{0},\alpha_{1}$, such that 
 \[|\lambda_{\epsilon}\psi(x)|_{r}\le \alpha_{0}(x)-\alpha_{\epsilon},\quad |\di \lambda_{\epsilon}\psi(x)|_{r}\le \alpha_{1}(x)-\alpha_{\epsilon},\quad \text{ for all }x\in\om.\]
From  standard density properties there exists a function $\phi_{\epsilon}\in C_{c}^{\infty}(\om,\mathcal{S}^{d\times d})$ such that the following conditions hold for all $x\in\om$:
 \begin{equation}\label{phieps}
 \|\phi_{\epsilon}-\lambda_{\epsilon}\psi\|_{C_{0}^{2}}<  \epsilon/2,\quad |\phi_{\epsilon}(x)-\lambda_{\epsilon}\psi(x)|_{r}\le \alpha_{\epsilon}/2, \quad |\di \phi_{\epsilon}(x)-\di \lambda_{\epsilon}\psi(x)|_{r}\le \alpha_{\epsilon}/2,
 \end{equation}
 which implies
 \begin{equation}\label{a_epsi_over2}
 |\phi_{\epsilon}(x)|_{r}\le \alpha_{0}(x)-\alpha_{\epsilon}/2,\quad |\di \phi_{\epsilon}(x)|_{r}\le \alpha_{1}(x)-\alpha_{\epsilon}/2,\quad \text{ for all }x\in\om.
 \end{equation}
 Then, from \eqref{a_epsi_over2} it follows that $\phi_{\epsilon}$ belongs to the first set in \eqref{density_Cc_C0_weighted} and from \eqref{lambdaeps_psi} and \eqref{phieps} we get that $\|\psi-\phi_{\epsilon}\|_{C_{0}^{2}}< \epsilon$.
\end{proof}

Now we are ready to establish the density result needed for dualization. For the sake of the flow of presentation we defer the proof, which parallels the one of \cite[Proposition 3.3]{TGV_decompression_part1}, to the appendix; see Appendix \ref{sec:app.proof}. Below ``a.e.'' stands for ``almost every'' with respect to the Lebesgue measure.
\newtheorem{weighted_TGV_Wdiv}[Wdiv]{Proposition}
\begin{weighted_TGV_Wdiv}\label{lbl:weighted_TGV_Wdiv}
Let $u\in L^{d/d-1}(\om)$, $\balpha=(\alpha_{0},\alpha_{1})$ with $\alpha_{0},\alpha_{1}\in C(\overline{\Omega})$ and $\alpha_{0},\alpha_{1}>\underline{\alpha}>0$. Then  the weighted TGV functional \eqref{weighted_TGV_def_intro}
%\begin{equation}\label{weighted_TGV_phi}
%\tgv_{\balpha}^{2}(u)=\sup \left \{\int_{\om} u\,\di^{2} \phi\,dx:\phi\in C_{c}^{\infty}(\om,\mathcal{S}^{d\times d}),\; |\phi(x)|_{r}\le \alpha_{0}(x),\; |\di\phi|_{r}\le \alpha_{1}(x), \text { for all }x\in\om  \right\}
%\end{equation}
can be equivalently written as 
\begin{equation}\label{weighted_TGV_p}
\begin{split}
\mathrm{TGV}_{\balpha}^{2}(u)=\sup \Big\{\int_{\om} u\,\di^{2} p\,dx:p  \in  W_{0}^{d}(\di^{2};\om),\; &|p(x)|_{r}\le \alpha_{0}(x),\; \\&|\di p (x)|_{r}\le \alpha_{1}(x), \text { for a.e. }x\in\om  \Big\}.\end{split}
\end{equation}
\end{weighted_TGV_Wdiv}

\noindent
\emph{Remark}: By slightly amending the proof of Proposition \ref{lbl:weighted_TGV_Wdiv} one can also show that 
\begin{equation}\label{density_weighted_TGV_H0}
\overline{C_{\balpha}}^{L^{2}(\om)}=K_{\balpha},
\end{equation}
where $K_{\balpha}$ is defined over $H_{0}(\di^{2};\om)$ rather than $W_{0}^{d}(\di ^{2};\om)$.

\section{The predual weighted TGV problem}\label{sec:dual_TGV_problem}

%While the predual problem for TV regularization has been studied \cite{hint_kun}, this is not the case for   TGV.  We do that  for the 

Now we study the predual problem for the weighted TGV model with continuous weights, i.e.,  we use the regularization functional \eqref{weighted_TGV_def_intro} or equivalently  \eqref{weighted_TGV_p}. For $T\in \mathcal{L}(L^{d/d-1}(\om),L^{2}(\om))$ we assume for simplicity that $B:=T^{\ast}T$ is invertible and define $\|v\|_{B}^{2}=\int_{\om} v B^{-1}v$, which induces a norm in $L^{d}(\om)$; compare \cite{hint_kun}.

\newtheorem{quadfid_duality_TGV}[Wdiv]{Proposition}
\begin{quadfid_duality_TGV}\label{lbl:quadfid_duality_TGV}
Let $f\in L^{2}(\om)$, $\balpha=(\alpha_{0},\alpha_{1})$, $\alpha_{0},\alpha_{1}\in C(\overline{\om})$ with $\alpha_{0},\alpha_{1}>\underline{\alpha}>0$ and  $T\in \mathcal{L}(L^{d/d-1}(\om),L^{2}(\om))$ with $T^{\ast}T$ invertible. Then there exists a solution to the primal problem
\begin{equation}\label{duality_TGV_primal}
\text{minimize}\quad \frac{1}{2}\|Tu -f\|_{L^{2}(\om)}^{2}+\tgv_{\balpha}^{2}(u)\quad\text{over }u\in\bv(\om),
\end{equation}
as well as to its predual problem
\begin{equation}\label{duality_TGV_predual}
\begin{split}
&\text{minimize}\quad \frac{1}{2}\|T^{\ast}f-\di^{2}p\|_{B}^{2}-\frac{1}{2}\|f\|_{L^{2}}^{2}\quad\text{over }p\in W_{0}^{d}(\di^{2};\om)\\
&\text{subject to}\quad |p(x)|_{r}\le\alpha_{0}(x),\; |\di p(x)|_{r}\le \alpha_{1}(x),\text{ for a.e. }x\in\om, 
\end{split}
\end{equation}
and there is no duality gap, i.e., the primal and predual optimal objective values are equal.
Moreover, the solutions $u$ and $p$ of these problems satisfy
\begin{align}
&Bu=T^{\ast}f-\di^{2}p.\label{TGV_duality_opt1}
\end{align}
\end{quadfid_duality_TGV}

\begin{proof}
We set $U=W_{0}^{d}(\di^{2};\om)$, $V=L^{d}(\om)$, $\Lambda: U\to V$ with $\Lambda p=\di ^{2}p$, and also $F_{1}:U\to \overline{\RR}$ and $F_{2}: V\to \overline{\RR}$ with 
\begin{align}
F_{1}(p)&=\mathcal{I}_{\{|\cdot(x)|_{r}\le\alpha_{0}(x),\text{ for a.e. }x\}}(p)+\mathcal{I}_{\{|\di \cdot(x)|_{r}\le\alpha_{1}(x),\text{ for a.e. }x\}}(p),\\
F_{2}(\psi)&= \frac{1}{2} \|T^{\ast}f-\psi\|_{B}^{2}-\frac{1}{2} \|f\|_{L^{2}(\om)}^{2}.
\end{align}
Here, $\mathcal{I}_S(\cdot)$ denotes the indicator function of a set $S$.
Immediately one gets that
\begin{equation}\label{equality_TGV_predual}
\inf_{p\in U} F_{1}(p)+F_{2}(\Lambda p)=\inf_{\substack{p\in W_{0}^{d}(\di^{2};\om)\\ |p(x)|_{r}\le\alpha_{0}(x) \\  |\di p(x)|_{r}\le \alpha_{1}(x)}} \frac{1}{2} \|T^{\ast}f-\di^{2}p\|_{B}^{2}-\frac{1}{2} \|f\|_{L^{2}(\om)}^{2}.
\end{equation}
The problem in \eqref{equality_TGV_predual} admits a solution. Indeed, first observe that the objective is bounded from below. Then note that since $\frac{1}{2}\|T\cdot-f\|_{L^{2}(\om)}^{2}$ is continuous at $0\in L^{d/d-1}(\om)$, its convex conjugate (see \cite{ekeland1999convex} for a general definition) which is equal to $ \frac{1}{2} \|T^{\ast}f+\cdot\|_{B}^{2}-\frac{1}{2} \|f\|_{L^{2}(\om)}^{2}$ is coercive in $L^{d}(\om)$; see \cite[Theorem 4.4.10]{Borwein}. Hence, any infimizing sequence $(p_{n})_{n\in\NN}$ is bounded in $W_{0}^{d}(\di^{2};\om)$, and thus there exist an (unrelabeled) subsequence and $p\in W^{d}(\di^{2};\om)$ such that $p_{n}\rightharpoonup p$, $\di p_{n} \rightharpoonup \di p$ and $\di^{2} p_{n} \rightharpoonup \di^{2} p$ weakly in $L^{d}$. We also have that $p$ is a feasible point since the set 
\[\left \{(h,\di h,\di^{2}h):\; h\in W_{0}^{d}(\di^{2};\om), \; |h(x)|_{r}\le\alpha_{0}(x),\; |\di h(x)|_{r}\le \alpha_{1}(x),\text{ for a.e. }x\in\om  \right \},\]
is  weakly closed. Then  $p$ is a minimizer of \eqref{equality_TGV_predual} as $ \frac{1}{2} \|T^{\ast}f-\cdot\|_{B}^{2}$ is weakly lower semicontinuous in $L^{d}(\om)$. 

We now calculate the expression $F_{1}^{\ast}(\Lambda^{\ast} u)+F_{2}^{\ast}(-u)$ for $u\in Y^{\ast}=L^{d/d-1}(\om)$. As before one verifies by direct computation that $F_{2}^{\ast}(-u)=\frac{1}{2}\|Tu-f\|_{L^{2}(\om)}^{2}$. Moreover,
\begin{align*}
F_{1}^{\ast}(\Lambda^{\ast} u)
&=\sup_{p\in X} \{\langle \Lambda^{\ast} u,p \rangle_{X^{\ast},X}- F_{1}(p)\}
=\sup_{p\in X}  \{\langle u,\Lambda p \rangle_{L^{d/d-1}(\om),L^{d}(\om)} -F_{1}(p)\}\\
&=\sup_{\substack{p\in W_{0}^{d}(\di^{2};\om)\\ |p(x)|_{r}\le\alpha_{0}(x) \\  |\di p(x)|_{r}\le \alpha_{1}(x)}} \int_{\om}\di^{2}p\,dx
=\tgv_{\balpha}^{2}(u).
\end{align*}
In order to prove that there is no duality gap, it suffices to show that the set $\bigcup_{\lambda\ge 0} \lambda (\mathrm{dom}(F_{2})-\Lambda(\mathrm{dom}(F_{1})))$ is a closed subspace of $V$. Then the so-called Attouch-Brezis condition is satisfied; see \cite{attouch1986duality}. It is immediate to see that $\mathrm{dom}(F_{2})=L^{d}(\om)$, and hence the condition holds true. Thus, we also get existence of a solution for the primal problem \eqref{duality_TGV_primal}.
  Finally \eqref{TGV_duality_opt1} follows from the optimality condition (Euler-Lagrange system) that corresponds to $\Lambda p \in \partial F_{2}^{\ast}(-u)$.
\end{proof}

The assumptions on $T$ in the above proposition are invoked throughout the rest of this work. In the special case when $T=Id$ (corresponding to image denoising), then we can only get existence of a solution to the predual problem in the Hilbert space $H_{0}(\di^{2};\om)$. The proof of this fact is similar to the one above.

The primal-dual optimality conditions for the problems \eqref{duality_TGV_primal} and \eqref{duality_TGV_predual} read
\begin{align}
p&\in \partial F_{1}^{\ast}(\Lambda^{\ast} u),\label{opt_cond_1}\\
\Lambda p&\in \partial F_{2}^{\ast}(-u),\label{opt_cond_2}
\end{align}
and we note once again that \eqref{TGV_duality_opt1} corresponds to \eqref{opt_cond_2} with $F_{2}$ and $\Lambda$ as in the proof of Proposition \ref{duality_TGV_primal}. Instead of making the optimality condition that corresponds to \eqref{opt_cond_1} explicit, we are interested in the analogous optimality conditions written in the variables $u$ and $w$ of the equivalent primal weighted TGV problem
\begin{equation}\label{TGV_primal_u_w}
\min_{\substack{u\in\bv(\om)\\ w\in \mathrm{BD}(\om)}} \frac{1}{2} \|Tu-f\|_{L^{2}(\om)}^{2}+
\int_{\om}\alpha_{1}\, d|Du-w|_{r^{\ast}} + \int_{\om} \alpha_{0}\, d|\mathcal{E}w|_{r^{\ast}}.
\end{equation}
For this purpose note first that the predual problem \eqref{duality_TGV_predual} can be equivalently written as
\begin{equation}\label{TGV_dual_q_p}\left\{
\begin{split}
&\text{minimize}\quad \frac{1}{2} \|T^{\ast}f+\di q\|_{B}^{2}-\frac{1}{2}\|f\|_{L^{2}(\om)}^{2}\quad \text{ over } (q,p)\in W_{0}^{d}(\di;\om)\times W_{0}^{d}(\di^{2},\om),\\
&\text{subject to} -\di p=q,
\, |p(x)|_{r}\le \alpha_{0}(x), \, |q(x)|_{r}\le \alpha_{1}(x), \, \text{for a.e. }x\in\om.
\end{split}\right.
\end{equation}
Then the solutions of the above two problems can be characterized as follows.
\newtheorem{optimality_u_w_q_p}[Wdiv]{Proposition}
\begin{optimality_u_w_q_p}\label{lbl:optimality_u_w_q_p}
The pair $(p,q)\in W_{0}^{d}(\di^{2};\om)\times W_{0}^{d}(\di;\om)$ is a solution to \eqref{TGV_dual_q_p}, and $(w,u)\in  \mathrm{BD}(\om)\times\bv(\om) $ is a solution to \eqref{TGV_primal_u_w} if and only if the following optimality conditions are satisfied:
\begin{align}
&Bu=T^{\ast}f+\di q,\label{opt_1}\\
&q=-\di p, \label{opt_2}\\
&|q(x)|_{r}\le \alpha_{1}(x) \text{ for a.e. } x\in\om \label{opt_3}\\
& \text{and }  \langle Du-w, \tilde{q}-q\rangle \le 0 \text{ for every } \tilde{q}\in W_{0}^{d}(\di;\om), \text{ with } |\tilde{q}(x)|_{r}\le \alpha_{1}(x) \text{ for a.e. } x\in\om,\nonumber\\
&|p(x)|_{r}\le \alpha_{0}(x)  \text{ for a.e. } x\in\om \label{opt_4}\\
&\text{ and } \langle \mathcal{E}w,\tilde{p}-p\rangle \le 0 \text{ for every }  \tilde{p}\in W_{0}^{d}(\di^{2};\om)  \text{ with } |\tilde{p}(x)|_{r}\le \alpha_{0}(x) \text{ for a.e. } x\in\om.\nonumber
\end{align}
\end{optimality_u_w_q_p}

\begin{proof}
Define $X=(X_{1},X_{2})=W_{0}^{d}(\di^{2},\om)\times W_{0}^{d}(\di,\om)$, $Y=(Y_{1},Y_{2})=W_{0}^{d}(\di;\om)\times L^{d}(\om)$, $\Lambda: X\to Y$ with $\Lambda(p,q)=(q+\di p,\di q)$, and $F_{1}: X\to \overline{\RR}$, $F_{2}: Y\to \overline{\RR}$ with 
\begin{align}
F_{1}(p,q)&=\mathcal{I}_{\{|\cdot(x)|_{r}\le\alpha_{0}(x),\text{ for a.e. }x\}}(p)+\mathcal{I}_{\{| \cdot(x)|_{r}\le\alpha_{1}(x),\text{ for a.e. }x\}}(q),\\
F_{2}(\phi,\psi)&=\mathcal{I}_{\{0\}}(\phi)+ \frac{1}{2} \|T^{\ast}f+\psi\|_{B}^{2}-\frac{1}{2} \|f\|_{L^{2}(\om)}^{2}.
\end{align}
One checks immediately that $\min_{(p,q)\in X} F_{1}(p,q)+F_{2}(\Lambda (p,q))$ corresponds to \eqref{TGV_dual_q_p}
%\begin{equation}\label{equality_TGV_predual_q_p}
%\min_{(p,q)\in X} F_{1}(p,q)+F_{2}(\Lambda (p,q))
%=\min_{\substack{p\in W_{0}^{d}(\di^{2};\om)\\q\in W_{0}^{d}(\di;\om),\,q=-\di p \\ |p(x)|_{r}\le\alpha_{0}(x) \\  |q (x)|_{r}\le \alpha_{1}(x)}} \frac{1}{2} \|T^{\ast}f+\di q\|_{B}^{2}-\frac{1}{2} \|f\|_{L^{2}(\om)}^{2}.
%\end{equation}
with the dual problem reading
%\begin{equation}\label{dual_TGV_w_v}
$
\min_{(w,u)\in Y^{\ast}} F_{1}^{\ast}(-\Lambda^{\ast}(w,u))+ F_{2}^{\ast}(w,u).
$
%\end{equation}
Observe that since
\[-\langle \Lambda^{\ast}(w,u), (p,q) \rangle_{X^{\ast},X}=-\langle (w,u),\Lambda(p,q)\rangle_{Y^{\ast},Y}=- \langle w,\di p \rangle_{Y_{1}^{\ast},Y_{1}} - \langle w,q \rangle_{Y_{1}^{\ast}, Y_{1}} -\langle u,\di q \rangle_{Y_{2}^{\ast}, Y_{2}},\]
 we have 
\[F_{1}^{\ast}(-\Lambda^{\ast}(w,u))
= \sup_{\substack{p\in W_{0}^{d}(\di^{2};\om) \\ |p(x)|_{r}\le \alpha_{0}(x)}} -\langle w,\di p \rangle_{Y_{1}^{\ast},Y_{1}}
+\sup_{\substack{q\in W_{0}^{d}(\di;\om) \\ |q(x)|_{r}\le \alpha_{1}(x)}}
- \langle w,q \rangle_{Y_{1}^{\ast}, Y_{1}} -\langle u,\di q \rangle_{Y_{2}^{\ast}, Y_{2}}.\]

Note that the suprema above are always greater or equal to the corresponding suprema over $C_{c}^{\infty}(\om,\mathcal{S}^{d\times d})\subset W_{0}^{d}(\di^{2};\om)$ and $C_{c}^{\infty}(\om,\RR^{d})\subset W_{0}^{d}(\di;\om)$.
Moreover, as we focus on a minimization problem, we are interesing in those $(w,u)\in Y^{\ast}$ that render the suprema finite. This implies in particular that $w$ has a distributional derivative $\mathcal{E}w$ with bounded Radon norm, and hence it is a Radon measure. It follows that $w\in L^{1}(\om,\RR^{d})$ yielding $w\in \mathrm{BD}(\om)$; see \cite{bredies2014regularization}. This also implies $\langle w, \di p \rangle_{Y_{1}^{\ast}, Y_{1}}=\langle w, \di p \rangle_{L^{d}(\om)^{\ast}, L^{d}(\om)}$ and similarly   $\langle w, q \rangle_{Y_{1}^{\ast}, Y_{1}}=\langle w, q \rangle_{L^{d}(\om)^{\ast}, L^{d}(\om)}$. Using now density results analogous to \eqref{density_weighted_TGV} we have
\begin{align*}
F_{1}^{\ast}(-\Lambda^{\ast}(w,u))
&= \sup_{\substack{p\in W_{0}^{d}(\di^{2};\om) \\ |p(x)|_{r}\le \alpha_{0}(x)}} -\langle w,\di p \rangle_{L^{d}(\om)^{\ast}, L^{d}(\om)}\\&\qquad
+\sup_{\substack{q\in W_{0}^{d}(\di;\om) \\ |q(x)|_{r}\le \alpha_{1}(x)}}
- \langle w,q \rangle_{L^{d}(\om)^{\ast}, L^{d}(\om)} -\langle u,\di q \rangle_{L^{d}(\om)^{\ast}, L^{d}(\om)}\\
&= \sup_{\substack{\phi\in C_{c}^{\infty}(\om,\mathcal{S}^{d\times d}) \\ |\phi(x)|_{r}\le \alpha_{0}(x)}} -\langle w,\di \phi \rangle_{L^{d}(\om)^{\ast}, L^{d}(\om)}\\&\qquad
+\sup_{\substack{\psi\in C_{c}^{\infty}(\om,\RR^{d})\\ |\psi(x)|_{r}\le \alpha_{1}(x)}}
- \langle w,\psi \rangle_{L^{d}(\om)^{\ast}, L^{d}(\om)} -\langle u,\di \psi \rangle_{L^{d}(\om)^{\ast}, L^{d}(\om)}\\
&=\sup_{\substack{\phi\in C_{c}^{\infty}(\om,\mathcal{S}^{d\times d}) \\ |\phi(x)|_{r}\le \alpha_{0}(x)}}
\langle \mathcal{E}w, \phi\rangle 
+\sup_{\substack{\psi\in C_{c}^{\infty}(\om,\RR^{d})\\ |\psi(x)|_{r}\le \alpha_{1}(x)}}\langle Du-w,\psi \rangle,  \\
&=\int_{\om} \alpha_{0}\, d|\mathcal{E}w|_{r^{\ast}} + \int_{\om} \alpha_{1}\, d|Du-w|_{r^{\ast}}.
\end{align*}
Here we used the fact that since the distribution $Du-w$ has a finite Radon norm, it can be represented by an $\RR^{d}$-valued finite Radon measure and in particular by $u\in\bv(\om)$. Furthermore, as in the proof of Proposition \ref{lbl:quadfid_duality_TGV} we have $F_{2}^{\ast}(w,u)=\frac{1}{2}\|Tu-f\|_{L^{2}(\om)}^{2}$.
%\begin{align*}
%F_{2}^{\ast}(w,u)
%&=\sup_{(\phi,\psi)\in Y} \langle w,\phi \rangle_{Y_{1}^{\ast}, Y_{1}} + \langle u,\psi \rangle _{Y_{2}^{\ast}, Y_{2}}- F_{2}(\phi,\psi)\\
%&= \langle u,\psi \rangle _{L^{d}(\om)^{\ast}, L^{d}(\om)}-\left (\frac{1}{2} \|T^{\ast}f+\psi\|_{B}^{2}-\frac{1}{2} \|f\|_{L^{2}(\om)}^{2} \right )\\
%&=\frac{1}{2}\|Tu-f\|_{L^{2}(\om)}^{2},
%\end{align*}
%similarly to the proof of Proposition \ref{lbl:quadfid_duality_TGV}. 

The fact that there is no duality gap is ensured by Propositions \ref{lbl:prop_weighted_TGV_min}, \ref{lbl:weighted_TGV_Wdiv} and \ref{lbl:quadfid_duality_TGV}. We now turn our attention to the optimality conditions 
\begin{align}
(p,q)&\in \partial  F_{1}^{\ast} (-\Lambda^{\ast}(w,u)),\label{opt_cond_1_pqwu}\\
\Lambda(p,q) &\in \partial F_{2}^{\ast}((w,u)).\label{opt_cond_2_pqwu}
\end{align}
It can be checked again that \eqref{opt_cond_2_pqwu} gives \eqref{opt_1} and \eqref{opt_2}. We now expand on \eqref{opt_cond_1_pqwu}. We have that $(p,q)\in   \partial  F_{1}^{\ast} (-\Lambda^{\ast}(w,u))$ which is equivalent to $-\Lambda^{\ast}(w,u)\in \partial F_{1}(p,q)$, that is $F_{1}(p,q)=0$ and 
\begin{alignat*}{3}
%& &F_{1}(p,q)+\langle -\Lambda^{\ast}(w,u), (\tilde{p}-p, \tilde{q}-q) \rangle_{X^{\ast},X} &\le F_{1}(\tilde{p},\tilde{q})\\
%\iff  
& &\langle -\Lambda^{\ast}(w,u), (\tilde{p}-p, \tilde{q}-q) \rangle_{X^{\ast},X} &\le  F_{1}(\tilde{p},\tilde{q})\\
\iff & & -\langle w, \di (\tilde{p}-p) \rangle
- \langle w, \tilde{q}-q \rangle- \langle u,\di  \tilde{q}-\di q \rangle & \le  F_{1}(\tilde{p},\tilde{q})\\
\iff & & \langle \mathcal{E}w, \tilde{p}-p \rangle & \le  \mathcal{I}_{\{|\cdot(x)|_{r}\le\alpha_{0}(x),\,\text{f.a.e.}\,x\}}(\tilde{p})\\
&&\langle Du-w, \tilde{q}-q \rangle & \le  \mathcal{I}_{\{|\cdot(x)|_{r}\le\alpha_{1}(x),\,\text{f.a.e.}\,x\}}(\tilde{q})\\
\iff & & \langle \mathcal{E}w, \tilde{p}-p \rangle & \le  0\\
&&\langle Du-w, \tilde{q}-q \rangle & \le 0,
\end{alignat*}
with the last two inequalities holding for any $\tilde{p}\in W_{0}^{d}(\di^{2};\om)$ with $|\tilde{p}(x)|_{r}\le \alpha_{0}(x)$ for a.e. $x\in\om$ and for any $\tilde{q}\in W_{0}^{d}(\di;\om)$ with $|\tilde{q}(x)|_{r}\le \alpha_{1}(x)$ for a.e. $x\in\om$. Hence we obtain \eqref{opt_3} and \eqref{opt_4}.
\end{proof}

Note that in the proof above we made use of the following density results:
\begin{align*}
\overline{C_{\alpha_{0}}}^{L^{d}(\om)}=K_{\alpha_{0}},\quad
\overline{C_{\alpha_{1}}}^{W_{0}^{d}(\di;\om)}=K_{\alpha_{1}},
\end{align*}
where
\begin{align}
C_{\alpha_{0}}&:=\left \{\di \phi:\; \phi\in C_{c}^{\infty}(\om,\mathcal{S}^{d\times d}),\; |\phi(x)|_{r}\le \alpha_{0}(x),\text{ for all }x\in\om \right \},\\
K_{\alpha_{0}}&:=\left \{\di p:\; p\in W_{0}^{d}(\di^{2};\om),\; |p(x)|_{r}\le \alpha_{0}(x),\text{ for a.e. }x\in\om \right \},\\
C_{\alpha_{1}}&:= \left\{ \psi: \psi\in C_{c}^{\infty}(\om,\RR^{d}),\; |\psi(x)|_{r}\le \alpha_{1}(x), \text{ for all }x\in\om \right \},\\
K_{\alpha_{1}}&:= \left \{q: q\in W_{0}^{d}(\di;\om),\; |q(x)|_{r}\le \alpha_{1}(x), \text{ for a.e. }x\in\om \right \}.
\end{align}
These results can be proven by using the duality arguments of the proof of Proposition \ref{lbl:weighted_TGV_Wdiv}, which originate from \cite{TGV_decompression_part1}, or with the use of mollification techniques; see \cite{sing_mol, Hint_Rau_density, hint_rau_ros}.

\section{A series of regularized problems}\label{sec:regularized_problems}

\subsection{Regularization of the primal problem}
With the aim of lifting the regularity of $u$ and $w$ to avoid measure-valued derivatives, we next consider the following regularized version of the primal weighted TGV problem \eqref{TGV_primal_u_w}:
\begin{equation}\label{TGV_primal_u_w_reg1}
\begin{split}
\text{minimize}\quad %_{\substack{ u\in H^{1}(\om) \\ w\in H^{1}(\om,\RR^{d})}}
\frac{1}{2} \|Tu-f\|_{L^{2}(\om)}^{2}
 &+\int_{\om}\alpha_{1}|\nabla u-w|_{r^{\ast}}dx+
  \int_{\om}\alpha_{0}|Ew|_{r^{\ast}}dx\\
&+\frac{\mu}{2}\|\nabla u\|_{L^{2}(\om)}^{2}+ \frac{\alpha}{2} \|w\|_{H^{1}(\om,\RR^{d})}^{2}\quad\text{over }(u,w)\in H^{1}(\om) \times H^{1}(\om,\RR^{d}),
\end{split}
\end{equation}
for some  constants $0< \mu,\alpha \ll 1$. Existence of solutions for \eqref{TGV_primal_u_w_reg1} follows from standard arguments.

Observe that \eqref{TGV_primal_u_w_reg1} is equivalent to $\min_{(w,u)\in \hat{X}} Q_{1}(w,u)+ Q_{2}(R(w,u))$ where  $\hat{X}=H^{1}(\om,\RR^{d})\times H^{1}(\om)$, $\hat{Y}= L^{2}(\om,\mathcal{S}^{d\times d})\times L^{2}(\om,\RR^{d})$, $R:\hat{X}\to \hat{Y}$ with $R(w,u)=(Ew,\nabla u-w)$, $Q_{1}: X\to \RR$, $Q_{2}: Y\to \RR$ with $Q(w,u)=\frac{1}{2}\|Tu-f\|_{L^{2}(\om)}^{2}+\frac{\mu}{2}\|\nabla u\|^2_{L^{2}(\om,\RR^{d})}+\frac{\alpha}{2}\|w\|^2_{H^{1}(\om,\RR^{d})}$
and $Q_{2}(\psi,\phi)=\int_{\om}\alpha_{1}|\phi|_{r^{\ast}}dx+\int_{\om}\alpha_{0}|\psi|_{r^{\ast}}dx$. %Here we are not interested in the corresponding predual problem but  rather in the primal-dual optimality conditions. 
Note that the Attouch-Brezis condition is satisfied since $\mathrm{dom}(Q_{2})=Y$.

\newtheorem{optimality_reg1}[Wdiv]{Proposition}
\begin{optimality_reg1}\label{lbl:optimality_reg1}
The pairs $(w,u)\in H^{1}(\om,\RR^{d})\times H^{1}(\om)$ and $(p,q)\in L^{2}(\om,\RR^{d\times d})\times L^{2}(\om,\RR^{d})$ are solutions to \eqref{TGV_primal_u_w_reg1}  and its  predual problem, respectively, if and only if the following optimality conditions are satisfied:
\begin{align}
&\text{$Bu-\mu \Delta u+\nabla^{\ast}q -T^{\ast}f=0\;$ in $ H^{1}(\om) ^{\ast}$,}\label{optimality_reg1_1}\\
&\text{$\alpha w- \alpha \Delta w -q+E^{\ast}p=0\;$ in $ H^{1}(\om,\RR^{d}) ^{\ast}$,} \label{optimality_reg1_2}\\
&\begin{cases}
\alpha_{1}(\nabla u -w)-q|\nabla u -w|=0 & \text{ if}\quad |q(x)|_{r}=\alpha_{1}(x), \\
\nabla u-w=0 						& \text{ if}\quad |q(x)|_{r}<\alpha_{1}(x),
 \end{cases}\label{optimality_reg1_3}\\
 &\begin{cases}
\alpha_{0}Ew-p|Ew|=0 & \text{ if}\quad |p(x)|_{r}=\alpha_{0}(x), \\
Ew=0 						& \text{ if}\quad |p(x)|_{r}<\alpha_{0}(x).
 \end{cases}\label{optimality_reg1_4}
\end{align}
\end{optimality_reg1}
\begin{proof}
The proof follows again easily by calculating the corresponding primal-dual optimality conditions.
\end{proof}

%One can compare the optimality conditions \eqref{optimality_reg1_1}--\eqref{optimality_reg1_4} to the ones of the unregularized problem \eqref{opt_1}--\eqref{opt_4}.

Next we study the relationship between the solutions of \eqref{TGV_primal_u_w} and \eqref{TGV_primal_u_w_reg1} as the parameters $\mu$, $\alpha$ tend to zero.
% then the solution pair of \eqref{TGV_primal_u_w_reg1} converge in an appropriate sense to a solution pair of \eqref{TGV_primal_u_w}. For that we need a further assumption on the forward operator $T$.
\newtheorem{limit_mu_a}[Wdiv]{Proposition}
\begin{limit_mu_a}\label{lbl:limit_mu_a}
In addition to the standing assumptions on $T$, let $T$ also be injective on the set of affine functions. Further, let $\mu_{n},\alpha_{n}\to 0$ and let $(w_{n},u_{n})_{n\in\NN}$ be a sequence of  solution pairs of the problem \eqref{TGV_primal_u_w_reg1}. 
Then $u_{n}\stackrel{\ast}{\rightharpoonup}  u^{\ast}$ and $w_{n}\stackrel{\ast}{\rightharpoonup}  w^{\ast}$ in $\bv(\om)$ and $\mathrm{BD}(\om)$ respectively, where $(w^{\ast},u^{\ast})$ is a solution pair for  \eqref{TGV_primal_u_w}. The convergence is up to subsequences.
\end{limit_mu_a}

\begin{proof}
For convenience of notation, define the energies 
\begin{align*}
E_{n}(w,u)
&= \frac{1}{2}\|Tu-f\|_{L^{2}(\om)}^{2}+\int_{\om}\alpha_{1} |\nabla u-w|_{r^{\ast}}dx+\int_{\om}\alpha_{0}| Ew|_{r^{\ast}}dx+ \frac{\mu_{n}}{2} \|\nabla u\|_{L^{2}(\om)}^{2}+ \frac{\alpha_{n}}{2}\|w\|_{H^{1}(\om,\RR^{d})}^{2},\\
E(w,u)&=\frac{1}{2}\|Tu-f\|_{L^{2}(\om)}^{2}+\int_{\om}\alpha_{1}d|Du-w|_{r^{\ast}}+\int_{\om}\alpha_{0}d|\mathcal{E} w|_{r^{\ast}}.
\end{align*}
We have
\begin{equation}\label{E_bound}
\begin{split}
\frac{1}{2}\|Tu_{n}-f\|_{L^{2}(\om)}^{2}+\int_{\om}\alpha_{1}|\nabla u_{n}-w_{n}|_{r^{\ast}}dx+\int_{\om}\alpha_{0}| Ew_{n}|_{r^{\ast}}dx
&\le E_{n}(w_{n},u_{n})\\
& \le E_{n}(0,0)\le \frac{1}{2} \|f\|_{L^{2}(\om)}^{2}.
\end{split}
\end{equation}
Thus, the sequences $(u_{n})_{n\in\NN}$ and $(w_{n})_{n\in\NN}$ are bounded in $\bv(\om)$ and $\mathrm{BD}(\om)$, respectively. In order to see this, note that by setting $\underline{\alpha}_{i}:= \min_{x\in\overline{\om}} \alpha_{i}(x)$, $i=0,1$, we get
\begin{align*}
\tgv_{\underline{\alpha}_{0},\underline{\alpha}_{1}}^{2}(u_{n})&=\min_{w\in\mathrm{BD}(\om)} \underline{\alpha}_{1} \|\nabla u_{n}-w\|_{\cM}+\underline{\alpha}_{0} \|\mathcal{E}w\|_{\cM}\\
&\le \int_{\om}\alpha_{1}|\nabla u_{n}-w_{n}|_{r^{\ast}}dx+\int_{\om}\alpha_{0}| Ew_{n}|_{r^{\ast}}dx
\le \frac{1}{2}\|f\|_{L^{2}(\om)}^{2}.
\end{align*}
Hence,  $(u_{n})_{n\in\NN}$ is bounded in the sense of second-order TGV. Using the fact that $T$ is injective on the set of affine functions, one can further derive a uniform $L^{1}$ bound on $(u_{n})_{n\in\NN}$; see for instance \cite[Theorem 4.2]{BredValk}. This implies further that this sequence is bounded on $\bv(\om)$. The bound on $(w_{n})_{n\in\NN}$ in $\mathrm{BD}(\om)$ then follows from \eqref{E_bound}.

 From  compactness theorems in those spaces (for $\mathrm{BD}(\om)$ see for instance \cite{bd}) we have that there exist $u^{\ast}\in\bv(\om)$ and $w^{\ast}\in \mathrm{BD}(\om)$ such that $u_{n_{k}}\stackrel{\ast}{\rightharpoonup}  u^{\ast}$ and $w_{n_{k}}\stackrel{\ast}{\rightharpoonup}  w^{\ast}$ in $\bv(\om)$ and $\mathrm{BD}(\om)$ respectively along suitable subsequences. Due to the lower semicontinuity of the functional $E$ with respect to these convergences, we have for any pair $(\tilde{w},\tilde{u})\in H^{1}(\om,\RR^{d})\times  H^{1}(\om)$
\begin{align}
E(w^{\ast},u^{\ast})\le \liminf_{k\to\infty} E(w_{n_{k}},u_{n_{k}})\le \liminf_{k\to\infty} E_{n_{k}}(w_{n_{k}},u_{n_{k}})\le \liminf_{k\to\infty} E_{n_{k}}(\tilde{w},\tilde{u})=E(\tilde{w},\tilde{u}). \label{opt_H}
\end{align}
Recall now that $\mathrm{LD}(\om)=\{w\in L^{1}(\om,\RR^{d}):\; Ew\in L^{1}(\om,\RR^{d\times d}) \}$ is a Banach space endowed with the norm $\|w\|_{\mathrm{LD}(\om)}=\|w\|_{L^{1}(\om,\RR^{d})}+ \|Ew\|_{L^{1}(\om,\RR^{d\times d})}$ and that $C^{\infty}(\overline{\om},\RR^{d})$ is dense in that space; see \cite{temam1985mathematical}. From this, in combination with the fact that $C^{\infty}(\overline{\om})$ is dense in $W^{1,1}(\om)\subset L^{d/d-1}(\om)$ we have that for every $(\hat{w},\hat{u})\in \mathrm{LD}(\om)\times  W^{1,1}(\om)$ there exists a sequence 
\[(\hat{w}_{h},\hat{u}_{h})_{h\in\NN}\in C^{\infty}(\overline{\om},\RR^{d})\times C^{\infty}(\overline{\om})\subseteq H^{1}(\om,\RR^{d})\times  H^{1}(\om),\]
such that
$E(\hat{w}_{h},\hat{u}_{h})\to E(\hat{w},\hat{u})$.
Hence, since \eqref{opt_H} holds we have that
\begin{equation}\label{opt_LD_W}
E(w^{\ast},u^{\ast})\le E(\hat{w},\hat{u}),\quad \text{ for all }(\hat{w},\hat{u})\in \mathrm{LD}(\om)\times W^{1,1}(\om).
\end{equation}
Finally, by following similar steps as in the proof of \cite[Thm. 3]{valkonen_jump_2}, we can show that  for every $(w,u)\in \mathrm{BD}(\om)\times \bv(\om)$ there exists a sequence $(w_{h},u_{h})_{h\in\NN}\in \mathrm{LD}(\om)\times W^{1,1}(\om)$ such that
\[\|u_{h}-u\|_{L^{d/d-1}(\om)}\to 0,\;\;
 \int_{\om}\alpha_{1}|\nabla u_{h}-w_{h}|_{r^{\ast}}dx\to \int_{\om}\alpha_{1}d|Du-w|_{r^{\ast}},\;\; 
 \int_{\om}\alpha_{0}|Ew_{h}|_{r^{\ast}}dx\to \int_{\om}\alpha_{0}d|\mathcal{E}w|_{r^{\ast}}, \]
which implies again  that
$E(w_{h},u_{h})\to E(w,u)$.
This, together with \eqref{opt_LD_W} yields
\[E(w^{\ast},u^{\ast})\le E(w,u),\quad \text{ for all }(w,u)\in \mathrm{BD}(\om)\times \bv(\om).\]
This yields that $(w^{\ast},u^{\ast})$ is a solution pair for \eqref{TGV_primal_u_w}.
% Finally from the uniqueness of the solution $u^{\ast}$ for \eqref{TGV_primal_u_w}, we have that for the unitial sequence we have $u_{n}\stackrel{\ast}{\rightharpoonup}  u^{\ast}$.
\end{proof}

Note that if the solution  $u^{\ast}$ of \eqref{TGV_primal_u_w} is unique, then we have $u_{n}\stackrel{\ast}{\rightharpoonup}  u^{\ast}$ along the entire sequence.

We now proceed to the second level of regularization of the problem \eqref{TGV_primal_u_w_reg1}, which, in addition to lifting the regularity of $u$ and $w$, respectively, also smoothes the non-differentiable constituents. For this purpose, we define the following primal problem which will also be treated numerically below:
\begin{equation}\label{tgv_primal_reg2}
\begin{split}
\text{minimize}\quad  \frac{1}{2}\|Tu-f\|_{L^{2}(\om)}^{2}&+\int_{\om}\alpha_{1}\varphi_{\gamma,r^{\ast}}(\nabla u-w)dx+\int_{\om}\alpha_{0} \varphi_{\gamma,r^{\ast}}(Ew)dx\\&+ \frac{\mu}{2} \|\nabla u\|_{L^{2}(\om)}^{2}+ \frac{\alpha}{2}\|w\|_{H^{1}(\om,\RR^{d})}^{2}\text{ over }(u,w)\in H^{1}(\om)\times H^{1}(\om,\RR^{d}).%\tag{$P_{\gamma}$}.
\end{split}
\tag{$P_{\gamma}$}
\end{equation}
Here $\varphi_{\gamma,r^{\ast}}$ denotes the Huber-regularized version of the $|\cdot|_{r^{\ast}}$ norm. In what follows, for notational convenience we will focus on $\varphi_{\gamma}:=\varphi_{\gamma,2}$, i.e.,
for a vector $v\in X$, $S=\RR^{d}$ or $\RR^{d\times d}$ and $\gamma>0$ we use  
\begin{equation}\label{Huber}
\varphi_{\gamma}(v)(x)=
\begin{cases}
|v(x)|-\frac{1}{2}\gamma         & \text{ if}\quad |v(x)|\ge \gamma,\\
\frac{1}{2\gamma} |v(x)|^{2}  & \text{ if}\quad |v(x)|< \gamma,
\end{cases}
\end{equation}
with $|\cdot|$ denoting either the Euclidean norm in $\RR^{d}$ or the Frobenius norm in $\RR^{d\times d}$. 
We mention that this type of Huber regularization of $\tv$-type terms in the primal problem corresponds to an $L^{2}$ regularization of the dual variables in the predual  \cite{journal_tvlp, stadler}. In order to illustrate this consider the following denoising problem \eqref{tgv_primal_reg2} without any $H^{1}$ regularization:
\begin{equation}\label{tgv_primal_Huber}
\text{minimize}\quad  \frac{1}{2}\|u-f\|_{L^{2}(\om)}^{2}+\int_{\om}\alpha_{1}d|Du-w|_{\gamma_{1}}+\int_{\om}\alpha_{0}d|\mathcal{E}w|_{\gamma_{2}}\quad\text{over }(u,w)\in \bv(\om)\times \mathrm{BD}(\om) ,
\end{equation}
%Here $\|Du-w\|_{\gamma_{1},\cM}=\|\nabla u-w\|_{\gamma_{1}, L^{1}(\om,\RR^{d})}+\|D^{s}u\|_{\cM}$ and $\|\mathcal{E}w\|_{\gamma_{2},\cM}=\|Ew\|_{\gamma_{2}, L^{1}(\om,\RR^{d\times d})}+\|\mathcal{E}^{s}w\|_{\cM}$.
where
\begin{align*}
\int_{\om}\alpha_{1}d|Du-w|_{\gamma_{1}}
&=\int_{\om} \alpha_{1} \varphi_{\gamma_{1}}(\nabla u -w)dx + \int_{\om}\alpha_{1}d|D^{s}u|,\\
\int_{\om}\alpha_{0}d|\mathcal{E}w|_{\gamma_{2}}
&=\int_{\om}\alpha_{0} \varphi_{\gamma_{2}}(Ew)dx + \int_{\om} \alpha_{0}d|\mathcal{E}^{s}w|.
\end{align*}
Its corresponding predual problem is given by
\begin{equation}\label{tgv_predual_Huber}
\begin{split}
&\text{maximize}\quad
%_{\substack{p\in W_{0}^{d}(\di^{2};\om) \\ q\in W_{0}^{d}(\di;\om),\, q=-\di p  \\ |p(x)|\le \alpha_{0}(x),\, | q(x)|\le \alpha_{1}(x) }} 
-\frac{1}{2} \|f+\di q\|_{L^{2}(\om)}^{2} - \frac{\gamma_{0}}{2} \int_{\om}\frac{1}{\alpha_{0}}|p|^{2}dx-\frac{\gamma_{1}}{2} \int_{\om}\frac{1}{\alpha_{1}}|q|^{2}dx+\frac{1}{2}\|f\|_{L^{2}(\om)}^{2},\\
&\text{over }(p,q)\in W_{0}^{d}(\di^{2};\om)\times  W_{0}^{d}(\di;\om),\\
&\text{subject to }q=-\di p,\, |p(x)|\le \alpha_{0}(x),\, | q(x)|\le \alpha_{1}(x) .
\end{split}
\end{equation}
The proof is similar to the one of Proposition \ref{lbl:optimality_u_w_q_p} with
\[F_{1}(p,q)=\mathcal{I}_{\{|\cdot(x)|\le \alpha_{0}(x)\}}(p)+\mathcal{I}_{\{|\cdot(x)|\le\alpha_{1}(x)\}}(q)
 - \frac{\gamma_{0}}{2} \int_{\om}\frac{1}{\alpha_{0}}|p|^{2}dx-\frac{\gamma_{1}}{2} \int_{\om}\frac{1}{\alpha_{1}}|q|^{2}dx,\]
and in the dualization process we use the fact that for an $S$-valued measure $\mu$ we have,
\begin{align*}
\int_{\om}\alpha d\varphi_{\gamma}(\mu)
%&=\sup_{\phi\in C_{c}^{\infty}(\om,S)} \int_{\om}\phi\,d\mu-\int_{\om}\Phi_{\gamma,\alpha}^{\ast}(\phi)\,dx\\
&=\sup \left\{\int_{\om}\phi\,d\mu-\mathcal{I}_{\{|\cdot(x)|\le \alpha(x)\}}(\phi)-\frac{\gamma}{2}\|\phi\|_{L^{2}(\om)}^{2}:\phi\in C_{c}^{\infty}(\om,S)\right\};
\end{align*}
%where $\Phi_{\gamma,\alpha}: \RR^{d}\to \RR$, $\Phi_{\gamma,\alpha}(v)=\alpha(x)$
see for instance \cite{Dem}.

Returning to the (doubly) regularized primal problem \eqref{tgv_primal_reg2}, we are primarily interested in its associated first-order optimality conditions. 

\newtheorem{optimality_reg2}[Wdiv]{Proposition}
\begin{optimality_reg2}\label{lbl:optimality_reg2}
We have that the pairs $(w,u)\in H^{1}(\om,\RR^{d})\times H^{1}(\om)$ and $(p,q)\in L^{2}(\om,\RR^{d\times d})\times L^{2}(\om,\RR^{d})$ are solution to \eqref{tgv_primal_reg2} and its predual problem, respectively, if and only if the following optimality conditions are satisfied:
\begin{align}
&\text{$Bu-\mu \Delta u+\nabla^{\ast}q -T^{\ast}f=0\;$ in $ H^{1}(\om) ^{\ast}$,} \label{reg_opt1}\tag{$Opt_{1}$}\\
&\text{$\alpha w- \alpha \Delta w -q+E^{\ast}p=0\;$ in $ H^{1}(\om,\RR^{d}) ^{\ast}$,} \label{reg_opt2}\tag{$Opt_{2}$}\\
&\text{$ \max (|\nabla u -w|,\gamma_{1})q-\alpha_{1}(\nabla u-w)=0\;$ in $L^{2}(\om,\RR^{d})$, } \label{reg_opt3}\tag{$Opt_{3}$} \\
&\text{$\max(|Ew|,\gamma_{0})p-\alpha_{0}Ew=0\;$ in $L^{2}(\om,\mathcal{S}^{d\times d})$.} \label{reg_opt4}\tag{$Opt_{4}$} 
\end{align}
\end{optimality_reg2}

The proof of Proposition \ref{lbl:optimality_reg2} follows from calculating the corresponding primal-dual optimality conditions as in Proposition \ref{lbl:optimality_reg1}.
The analogous approximation result follows, where we have set $\gamma_{0}=\gamma_{1}=\gamma$ and $T=Id$ for simplicity.

\newtheorem{gamma_approximation}[Wdiv]{Proposition}
\begin{gamma_approximation}\label{lbl:gamma_approximation}
Let $(w,u,q,p)$ and $(w_{\gamma},u_{\gamma}, p_{\gamma},q_{\gamma})$ satisfy the optimality conditions \eqref{optimality_reg1_1}--\eqref{optimality_reg1_4} and \eqref{reg_opt1}--\eqref{reg_opt4}, respectively. Then, as $\gamma\to 0$, we have $u_{\gamma}\to u$ strongly in $H^{1}(\om)$, $w_{\gamma}\to w$ strongly in $H^{1}(\om,\RR^{d})$ as well as $\di q_{\gamma} \to \di q$ and $q_{\gamma}+\di p_{\gamma}\to q+ \di p$ weakly$^\ast$ in $H^{1}(\om)^{\ast}$ and $H^{1}(\om,\RR^{d})^{\ast}$, respectively.
\end{gamma_approximation}

\begin{proof}
By subtracting first two equations of the optimality system of Proposition \ref{lbl:optimality_reg1} and \ref{lbl:optimality_reg2}, respectively, we get for all $v\in H^{1}(\om)$, $\omega\in H^{1}(\om,\RR^{d})$
\begin{align}
&\int_{\om}(u-u_{\gamma})v\, dx+ \mu\int_{\om}\nabla (u-u_{\gamma})\nabla v\, dx=\int_{\om}(q_{\gamma}-q)\nabla v\,dx,\label{weak_u}\\
&\alpha\int_{\om} (w-w_{\gamma})\omega\,dx+ \alpha\int_{\om} \nabla (w-w_{\gamma})\nabla \omega\,dx=\int_{\om}(q-q_{\gamma})\omega\,dx+\int_{\om}(p_{\gamma}-p)E\omega\, dx.\label{weak_w}
\end{align}
When using $v=u-u_{\gamma}$ and $\omega=w-w_{\gamma}$ in the equations above and adding them up we get
\begin{equation}\label{add_u_w}
\|u-u_{\gamma}\|_{L^{2}(\om)}^{2}+\mu \|\nabla u-\nabla u_{\gamma}\|_{L^{2}(\om,\RR^{d})}^{2}
+\alpha \|w-w_{\gamma}\|_{H^{1}(\om,\RR^{d})}^{2}=R_{1}+R_{2},
\end{equation}
where 
\[R_{1}:=\int_{\om} (q_{\gamma}-q)^\top [\nabla u-w-(\nabla u_{\gamma}-w_{\gamma})]\,dx,\qquad
R_{2}:=\int_{\om}(p_{\gamma}-p)^\top E(w-w_{\gamma})\,dx.
\]
We now estimate $R_{1}$ and $R_{2}$. Consider the partitions of $\om$ into disjoint sets (up to sets of measure zero) $\om=\mathcal{A}\cup \mathcal{I}=\mathcal{A}_{\gamma}\cup \mathcal{I}_{\gamma}$, where
\begin{alignat*}{3}
\mathcal{A}&=\{x\in\om: \;|\nabla u-w|>0\}, \qquad &&\mathcal{I}&&=\om\setminus \mathcal{A},\\
\mathcal{A}_{\gamma}&=\{x\in\om: \;|\nabla u_{\gamma}-w_{\gamma}|>\gamma\},\qquad &&\mathcal{I}_{\gamma}&&=\om\setminus \mathcal{A}_{\gamma}.
\end{alignat*}
We estimate $R_{1}$ separately on the  disjoint sets $\mathcal{A}_{\gamma}\cap \mathcal{A}$, $\mathcal{A}_{\gamma}\cap \mathcal{I}$, $\mathcal{I}_{\gamma}\cap\mathcal{A}$ and $\mathcal{I}_{\gamma}\cap \mathcal{I}$.  Recall that  $|q(x)|\le \alpha_{1}(x)$, $|q_{\gamma}(x)|\le \alpha_{1}(x)$ for almost every $x\in\om$. Starting from $\mathcal{A}_{\gamma}\cap \mathcal{A}$ and noticing that 
\[q=\alpha_{1}\frac{\nabla u-w}{|\nabla u -w|},\quad q_{\gamma}=\alpha_{1}\frac{\nabla u_{\gamma}-w_{\gamma}}{|\nabla u_{\gamma}-w_{\gamma}|},\]
it follows  that pointwise on $\mathcal{A}_{\gamma}\cap \mathcal{A}$ (with argument $x$ left off for ease of notation) we have
\begin{align*}
 (q_{\gamma}-q)^\top[\nabla u-w-(\nabla u_{\gamma}-w_{\gamma})]
 &=q_{\gamma} (\nabla u-w)-\alpha_{1}|\nabla u_{\gamma} -w_{\gamma} |- \alpha_{1}|\nabla u-w|+q(\nabla u_{\gamma}-w_{\gamma})\\
 &\le \alpha_{1}|\nabla u-w|-\alpha_{1}|\nabla u_{\gamma} -w_{\gamma} |- \alpha_{1}|\nabla u-w|+\alpha_{1}|\nabla u_{\gamma} -w_{\gamma} |\\
 &=0.
\end{align*}
Turning now to the set $\mathcal{A}_{\gamma}\cap \mathcal{I}$ and recalling $\nabla u-w=0$ we have
\begin{align*}
 (q_{\gamma}-q)^\top [\nabla u-w-(\nabla u_{\gamma}-w_{\gamma})]\le -\alpha_{1}|\nabla u_{\gamma}-w_{\gamma}|+|q||\nabla u_{\gamma}-w_{\gamma}|\le 0.
\end{align*}
For the set $\mathcal{I}_{\gamma}\cap\mathcal{A}$, note that 
\[|\nabla u_{\gamma}-w_{\gamma}|\le \gamma,\quad \nabla u_{\gamma} - w_{\gamma}=\frac{\gamma}{\alpha_{1}}q_{\gamma}. \]
Thus, we can estimate
\begin{align*}
 (q_{\gamma}-q)^\top [\nabla u-w-(\nabla u_{\gamma}-w_{\gamma})]
 &\le q_{\gamma}(\nabla u -w)-\alpha_{1}|\nabla u - w| -q_{\gamma}(\nabla u_{\gamma}-w_{\gamma})+q(\nabla u_{\gamma}-w_{\gamma})\\
 &\le \alpha_{1}|\nabla u-w|-\alpha_{1}|\nabla u - w| -q_{\gamma}(\nabla u_{\gamma}-w_{\gamma})+q(\nabla u_{\gamma}-w_{\gamma})\\
 &\le -\frac{\gamma}{\alpha_{1}}|q_{\gamma}|^{2}+\alpha_{1}\frac{\gamma}{\alpha_{1}}|q_{\gamma}|=\gamma|q_{\gamma}|\left (1-\frac{|q_{\gamma}|}{\alpha_{1}} \right )\le \gamma\alpha_{1}.
\end{align*}
Similarly, for the set $\mathcal{I}_{\gamma}\cap\mathcal{I}$ we get
\begin{align*}
 (q_{\gamma}-q)^\top [\nabla u-w-(\nabla u_{\gamma}-w_{\gamma})]\le \gamma\alpha_{1}.
 \end{align*}
Combining the above estimates we have
\[R_{1}\le \int_{\om}\gamma\alpha_{1}\,dx\to 0\]
and for $R_{2}$ we get 
\[R_{2}\le \int_{\om}\gamma\alpha_{0}\,dx\to 0.\]
Hence, from \eqref{add_u_w} we obtain the desired convergences for $u_{\gamma}$ and $w_{\gamma}$.  From this result and using \eqref{weak_u} and \eqref{weak_w} we get that for every $v\in H^{1}(\om)$ and for every $\omega\in H^{1}(\om,\RR^{d})$ we have \begin{align*}
\int_{\om}v\di q_{\gamma} \,dx &\to \int_{\om} v\di q \,dx\quad \text{ and }\quad 
\int_{\om}\omega(q_{\gamma}+\di p_{\gamma})\,dx\to \int_{\om}\omega(q+\di p)\,dx,
\end{align*}
as $\gamma\to 0$. This completes the proof.
\end{proof}

Finally, the following approximation result holds true, when $\alpha$, $\mu$ and $\gamma$ tend to zero. 
\newtheorem{full_approximation}[Wdiv]{Proposition}
\begin{full_approximation}\label{lbl:full_approximation}
Let $T=Id$,  $\mu_{n},\alpha_{n},\gamma_{n}\to 0$, and denote by $u_{\mu_{n},\alpha_{n},\gamma_{n}}\in H^{1}(\om)$ the solution of  \eqref{tgv_primal_reg2} with $(\mu,\alpha,\gamma)=(\mu_{n},\alpha_{n},\gamma_{n})$. Then $u_{\mu_{n},\alpha_{n},\gamma_{n}}\stackrel{\ast}{\rightharpoonup}  u^{\ast}$ in $\bv(\om)$, where $u^{\ast}$ solves \eqref{TGV_primal_u_w}.
\end{full_approximation}

\begin{proof}
It is easy to show that $u_{\mu_{n},\alpha_{n},\gamma_{n}}\to u^{\ast}$ in $L^{1}(\om)$. Indeed, we have 
\[\|u_{\mu_{n},\alpha_{n},\gamma_{n}}-u^{\ast}\|_{L^{1}(\om)}
\le \|u_{\mu_{n},\alpha_{n},0}-u^{\ast}\|_{L^{1}(\om)}+ \|u_{\mu_{n},\alpha_{n},\gamma_{n}}-u_{\mu_{n},\alpha_{n},0}\|_{L^{1}(\om)}.\]
According to Proposition \ref{lbl:limit_mu_a} it holds that $ \|u_{\mu_{n},\alpha_{n},0}-u^{\ast}\|_{L^{1}(\om)}\to 0$. The other term tends to zero according to equation \eqref{add_u_w} of Proposition \ref{lbl:gamma_approximation}. There, the estimates for $R_{1}, R_{2}$ are not affected if we substitute $u$ and $u_{\gamma}$ by $u_{\mu_{n},\alpha_{n},0}$ and $u_{\mu_{n},\alpha_{n},\gamma_{n}}$, respectively. In other words, the estimate
\[\|u_{\mu_{n},\alpha_{n},0}-u_{\mu_{n},\alpha_{n},\gamma_{n}}\|_{L^{2}(\om)}^{2}\le \gamma_{n}|\om|\|\alpha_{0}+\alpha_{1}\|_{\infty}\]
holds and hence $\|u_{\mu_{n},\alpha_{n},\gamma_{n}}-u_{\mu_{n},\alpha_{n},0}\|_{L^{1}(\om)}\to 0$. 

To finish the proof and show that the convergence is weak$^{\ast}$ in $\bv(\om)$, it suffices to establish that $\int_{\om}|\nabla u_{\mu_{n},\alpha_{n},\gamma_{n}}\,dx |$ is uniformly bounded in $n$; see \cite[Prop. 3.13]{AmbrosioBV}. Observe first that as in the proof of Proposition \ref{lbl:limit_mu_a} we get 
\begin{equation}\label{tgv_gamma_bd}
\int_{\om}\alpha_{1}\varphi_{\gamma}(\nabla u_{\mu_{n},\alpha_{n},\gamma_{n}}-w_{\mu_{n},\alpha_{n},\gamma_{n}})dx +\int_{\om}\alpha_{0}\varphi_{\gamma}(Ew_{\mu_{n},\alpha_{n},\gamma_{n}})dx\le \frac{1}{2}\|f\|_{L^{2}(\om)}^{2}.
\end{equation}
From \eqref{Huber} we have that $\varphi_{\gamma}(\cdot)\ge |\cdot|-\frac{1}{2}\gamma$, and hence we obtain
\begin{equation}\label{tgv_nogamma_bd}
\int_{\om}\alpha_{1}|\nabla u_{\mu_{n},\alpha_{n},\gamma_{n}}-w_{\mu_{n},\alpha_{n},\gamma_{n}}|dx +\int_{\om}\alpha_{0}|Ew_{\mu_{n},\alpha_{n},\gamma_{n}}| dx\le \frac{1}{2} \|f\|_{L^{2}(\om)}^{2}+\frac{(\|\alpha_{1}\|_{\infty}+\|\alpha_{0}\|_{\infty})|\om|\gamma_{n}}{2}\le K,
\end{equation}
for some constant $K>0$.
Then, as in the proof of Proposition \ref{lbl:limit_mu_a}, we get that  $(u_{\mu_{n},\alpha_{n},\gamma_{n}})_{n\in\NN}$ is bounded in TGV which, together with the $L^{1}$ bound, gives the desired bound in TV.
\end{proof}

\subsection{Regularization of the predual problem}

We now consider the following regularization of the predual problem \eqref{duality_TGV_predual} for $\epsilon>0$: 
\begin{equation}\label{tgv_predualE}%\tag{$P_\epsilon^*$}
\min_{p\in H_0^2(\Omega, \mathcal{S}^{d\times d})}\frac{\epsilon}{2}\|\Delta p\|_{L^2(\Omega, \mathcal{S}^{d\times d})}^2+\frac{\epsilon}{2}\|p\|_{L^{2}(\om,\mathcal{S}^{d\times d})}^{2}+\frac{1}{2} \|T^{\ast}f-\di^{2} p\|_{B}^{2}+\frac{1}{\epsilon}\mathrm{M}(p),
\end{equation}
%where
%\begin{align*}
%H_0^2(\Omega; \mathcal{S}^{d\times d})&:=\overline{C_{c}^{\infty}(\om,\mathcal{S}^{d\times d})}^{\|\cdot\|_{H^2(\om; \mathbb{R}^{d\times d})}}=\{q\in H^2(\Omega, \mathcal{S}^{d\times d}): q=\frac{\partial q}{\partial n}=0 \:\text{ on }\: \partial \Omega\},
%\end{align*}
where $H_0^2(\Omega, \mathcal{S}^{d\times d})$ denotes the usual Sobolev space with homogeneous first-order trace on the boundary \cite{Adams}, and the  map $\mathrm{M}:H_0(\di^{2};\Omega) \to \mathbb{R}^+_0$ is convex and continuous, with $\mathrm{M}(p)=0$ if and only if $|p(x)|_{r}\le \alpha_{0}(x)$ and $|\di p (x)|_{r}\le \alpha_{1}(x)$ for almost every $x\in\om$. We also assume that $M$ is coercive in the sense that $M(p_{n})\to \infty$ if $\max\{\|p_{n}\|_{L^{2}(\om)},\|\di p_{n}\|_{L^{2}(\om)}\}\to\infty$ for some sequence $(p_{n})_{n\in\NN}$. Further, $\Delta$ denotes the vector Laplacian operator, which is the standard Laplacian applied component-wise. For the sake of discussion, we mention that more sophisticated regularizations securing $\di p\in L^r(\om)$ with $r>2$ for the subsequent application of (function space versions of) generalized Newton methods for solving this problem are possible as well.

\newtheorem{consistency}[Wdiv]{Proposition}
\begin{consistency}\label{lbl:consistency}
Problem \eqref{tgv_predualE} admits a unique solution $p_\epsilon\in H_0^2(\Omega, \mathcal{S}^{d\times d})$, and \begin{equation}\label{conv}
\di^{2}p_\epsilon\to \di^{2}p, 
\end{equation}
in $L^2(\Omega)$ as $\epsilon\to 0$, up to subsequences, where $p$ solves  problem \eqref{duality_TGV_predual}.
\end{consistency}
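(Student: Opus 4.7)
\emph{Plan.} I split the argument into three steps: (i) existence and uniqueness of $p_\epsilon$ together with $\epsilon$-uniform a priori bounds and extraction of a weak limit; (ii) identification of the weak limit as a solution of \eqref{duality_TGV_predual}, which is the main obstacle and requires a recovery-sequence construction; (iii) upgrade from weak to strong convergence in $L^2(\Omega)$.

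\emph{Step 1 (Existence, a priori bounds, weak limit).} Denote the objective in \eqref{tgv_predualE} by $F_\epsilon$. The quadratic terms $\tfrac{\epsilon}{2}\|\Delta p\|_{L^2}^2+\tfrac{\epsilon}{2}\|p\|_{L^2}^2$ render $F_\epsilon$ strictly convex and coercive on $H_0^2(\Omega,\mathcal{S}^{d\times d})$, since $\|\Delta p\|_{L^2}^2+\|p\|_{L^2}^2$ is equivalent to the $H_0^2$-norm by standard elliptic regularity for the biharmonic operator; the convex continuous map $\mathrm{M}$ and the fidelity $\tfrac12\|T^{\ast}f-\di^2 p\|_B^2$ are both weakly lower semicontinuous. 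The direct method of the calculus of variations yields a minimizer, unique by strict convexity. Since $p=0$ is admissible with $\mathrm{M}(0)=0$, testing gives
\[
F_\epsilon(p_\epsilon)\;\leq\;F_\epsilon(0)\;=\;\tfrac{1}{2}\|T^{\ast}f\|_B^2,
\]
so $\mathrm{M}(p_\epsilon)\leq C\epsilon\to 0$ and $\{\di^2 p_\epsilon\}$ is bounded (via the $B$-norm equivalence on $L^d$, and hence in $L^2$). The coercivity hypothesis on $\mathrm{M}$ then forces $\{p_\epsilon\}$ and $\{\di p_\epsilon\}$ to be bounded in $L^2(\Omega)$. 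Along a subsequence, $p_\epsilon\rightharpoonup\bar p$, $\di p_\epsilon\rightharpoonup\di\bar p$, $\di^2 p_\epsilon\rightharpoonup\di^2\bar p$ weakly in $L^2(\Omega)$, so $\bar p\in H_0(\di^2;\Omega)$; weak lower semicontinuity of $\mathrm{M}$ yields $\mathrm{M}(\bar p)\leq \liminf\mathrm{M}(p_\epsilon)=0$, meaning $\bar p$ satisfies the pointwise constraints and is feasible for \eqref{duality_TGV_predual}.

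\emph{Step 2 (Optimality of $\bar p$ --- the main obstacle).} Let $\tilde p\in W_0^d(\di^2;\Omega)$ be any feasible competitor for \eqref{duality_TGV_predual}. I construct a recovery sequence $\{\tilde p_\epsilon\}\subset H_0^2(\Omega,\mathcal{S}^{d\times d})$ that is still feasible (so $\mathrm{M}(\tilde p_\epsilon)=0$) and satisfies $\di^2\tilde p_\epsilon\to\di^2\tilde p$ in $L^2(\Omega)$ and $\epsilon(\|\Delta\tilde p_\epsilon\|_{L^2}^2+\|\tilde p_\epsilon\|_{L^2}^2)\to 0$. Adapting the scaling-plus-mollification argument used in the proof of Proposition \ref{lbl:prop_weighted_TGV_min}, with $\lambda_k\uparrow 1$ the rescaled $\lambda_k\tilde p$ satisfies the pointwise bounds with uniform slack $(1-\lambda_k)\underline\alpha>0$, so a mollification at small enough width $\delta_k$ produces $\phi_k\in C_c^\infty(\Omega,\mathcal{S}^{d\times d})\subset H_0^2(\Omega,\mathcal{S}^{d\times d})$ that remains feasible and converges to $\tilde p$ in $W^d(\di^2;\Omega)$. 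A diagonal selection $k=k(\epsilon)$, slow enough that $\epsilon\|\phi_{k(\epsilon)}\|_{H^2}^2\to 0$, defines $\tilde p_\epsilon:=\phi_{k(\epsilon)}$. The comparison $F_\epsilon(p_\epsilon)\leq F_\epsilon(\tilde p_\epsilon)$, weak lower semicontinuity of the fidelity on the left, and the vanishing of the remaining terms on the right give
\[
\tfrac{1}{2}\|T^{\ast}f-\di^2\bar p\|_B^2 \;\leq\; \tfrac{1}{2}\|T^{\ast}f-\di^2\tilde p\|_B^2.
\]
Since $\tilde p$ was arbitrary, $\bar p$ solves \eqref{duality_TGV_predual}, and I take $p:=\bar p$.

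\emph{Step 3 (Strong convergence).} The chain of inequalities in Step 2, with $\tilde p = \bar p$ as the test competitor, forces $\|T^{\ast}f-\di^2 p_\epsilon\|_B\to \|T^{\ast}f-\di^2 p\|_B$. Combined with the weak $L^2$-convergence $\di^2 p_\epsilon\rightharpoonup\di^2 p$, the Radon--Riesz property in the Hilbert space $L^2(\Omega)$ (equipped with the equivalent $B$-inner product) delivers strong convergence $\di^2 p_\epsilon\to \di^2 p$ in $L^2(\Omega)$, which is precisely \eqref{conv}.
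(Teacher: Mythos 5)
Your overall architecture coincides with the paper's proof: existence and uniqueness by coercivity (using that $\|p\|_{L^2}+\|\Delta p\|_{L^2}$ is an equivalent norm on $H_0^2$) and strict convexity; the comparison $F_\epsilon(p_\epsilon)\le F_\epsilon(0)$ giving $\mathrm{M}(p_\epsilon)\to 0$ and boundedness in $H_{0}(\di^{2};\om)$ via the coercivity of $\mathrm{M}$ and the $B$-term; feasibility of the weak limit by weak lower semicontinuity; a liminf/limsup comparison against smooth feasible competitors; and, finally, weak convergence plus convergence of the fidelity term yielding strong $L^2$ convergence of $\di^{2}p_\epsilon$. Steps 1 and 3 are sound (modulo the same implicit use, also present in the paper, of the compatibility of the $B$-norm with the $L^2$ topology).

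The genuine gap is in Step 2, in the construction of the recovery sequence. You assert that the scaling argument of Proposition \ref{lbl:prop_weighted_TGV_min} combined with mollification produces, for an arbitrary feasible competitor $\tilde p\in W_0^{d}(\di^{2};\om)$, functions $\phi_k\in C_c^{\infty}(\om,\mathcal{S}^{d\times d})$ that remain feasible and converge to $\tilde p$ in $W^{d}(\di^{2};\om)$. This is precisely the nontrivial density statement of Proposition \ref{lbl:weighted_TGV_Wdiv} and its $L^2$ variant \eqref{density_weighted_TGV_H0}, which the paper establishes by a duality argument in Appendix \ref{sec:app.proof}; it does not follow by ``adapting'' the proof of Proposition \ref{lbl:prop_weighted_TGV_min}, since that argument approximates $C_0^{2}$ functions in the uniform $C_0^{2}$ topology, whereas here the competitor has only integrable first- and second-order divergences, and mollifying the (scaled) zero extension of such a function does not produce compactly supported functions in $\om$, nor functions with the homogeneous first-order trace required for membership in $H_0^{2}(\om,\mathcal{S}^{d\times d})$; constraint- and support-preserving smoothing up to the boundary needs the additional constructions of the cited mollification literature or the duality route. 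The repair is immediate and brings you exactly onto the paper's path: invoke \eqref{density_weighted_TGV_H0} to obtain feasible approximants in $C_c^{\infty}(\om,\mathcal{S}^{d\times d})\subset H_0^{2}(\om,\mathcal{S}^{d\times d})$ with $\di^{2}\tilde p_k\to\di^{2}\tilde p$ in $L^2(\om)$; your diagonal selection and the comparison inequality then reproduce the chain \eqref{ineq}, and this version of the density result also legitimizes testing with the weak limit itself in Step 3, which is only known to lie in $H_{0}(\di^{2};\om)$ rather than $W_0^{d}(\di^{2};\om)$.
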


\begin{proof}
By $J(\cdot)$ we denote the optimal objective of \eqref{duality_TGV_predual}, where we ignore the term $\frac{1}{2}\|f\|_{L^{2}(\om)}$, and let $K_{\balpha}$ be the corresponding constraint set. Let $\epsilon_{n}\to 0$.
Note that  $\|\cdot\|_{L^2}+ \|\Delta \cdot\|_{L^2}$ is a norm on $H^2(\om, \mathcal{S}^{d\times d})$ \cite{grisvard2011elliptic}. Thus, the minimizing functional in \eqref{tgv_predualE}, denoted by $J_{n}(\cdot)$, is coercive over $H_0^2(\Omega, \mathcal{S}^{d\times d})$ for every $n\in\mathbb{N}$. Hence, any infimizing sequence of  \eqref{tgv_predualE} has a weakly convergent subsequence in $H_0^2(\Omega, \mathcal{S}^{d\times d})$.  
%since the non-negative functional $J_\epsilon(p):=J(p)+ G_\epsilon(p)$, with $J(p):=\frac{1}{2} \|f-\di^{2} p\|_{L^{2}(\om)}^{2}$, and $G_\epsilon(p):=\frac{\epsilon}{2}\|\Delta p\|_{L^2(\Omega; \mathcal{S}^{d\times d})}^2+\frac{1}{\epsilon}\mathrm{M}(p)$, is coercive.
  Further, $J_n$ is weakly lower semicontinuous and, thus,    \eqref{tgv_predualE} has a solution $p_n$, which is unique due to strict convexity.

%A standard optimization technique for penalty methods (see \cite[\S 10.11. Lemma 1]{MR0238472}) shows that $ G_\epsilon(p_\epsilon)\to 0$, and 

Since $J_n(p_n)\leq J_{n}(0)$ for all $n\in\mathbb{N}$, by using the coercivity assumptions on $M$, we have that $(p_n)_{n\in\NN}$ is bounded in $H_{0}(\di^{2};\om)$. Hence, there exists $p^{\ast}\in H_{0}(\di^{2};\om)$ and an (unrelabeled) subsequence of $(p_{n})_{n\in\NN}$ converging weakly to $p^{\ast}$. We then have
\begin{equation}\label{ineq}
J(p^{\ast})\leq \liminf_{n\to\infty} J (p_n)\leq \liminf_{n\to\infty} J_n(p_n)\leq \limsup_{n\to\infty} J_n(p_n)\leq \limsup_{n\to\infty} J_n(\tilde{p})= J(\tilde{p}),
\end{equation}
for all $\tilde{p}\in H_0^2(\Omega, \mathcal{S}^{d\times d})$ with $\tilde{p}\in K_{\balpha}$. Note that necessarily $p^{\ast}\in K_{\balpha}$ as well since
\[M(p^{\ast})\le \liminf_{n\to\infty} M(p_{n})\le \liminf_{n\to\infty} \frac{\epsilon_{n}}{2} \|T^{\ast}f\|_{B}^{2}=0.\]
%This together with \eqref{ineq} holds,  implies 
%\begin{equation*}
%J(p)\leq J(\tilde{p}),\qquad \forall \tilde{p}\in H_0^2(\Omega, \mathcal{S}^{d\times d}), \text{ such that } p\in K_{\balpha}.
%\end{equation*}
We claim that $p^{\ast}$ actually solves \eqref{duality_TGV_predual}. 
Indeed, for every $p\in H_{0}(\di^{2};\om)$ with $p\in K_{\balpha}$, we get
 from the density \eqref{density_weighted_TGV_H0} that there exists $(\tilde{p}_{n})_{n\in\NN}\subset C_{c}^{\infty}(\om,\mathcal{S}^{d\times d})\subset H_0^2(\Omega, \mathcal{S}^{d\times d})$ and $\tilde{p}_{n} \in K_{\balpha}$, such that $\di ^{2}\tilde{p}_{n}\to \di^{2} p$ in $L^{2}(\om)$. Hence, from the continuity of $J$ we get
$J(p^{\ast})\leq \lim_{n\to \infty} J(\tilde{p}_{n})=J(p)$,
and thus $p^{\ast}$ solves \eqref{duality_TGV_predual}.
%  Further, every weak limit point   $p$ of $(p_n)_{n\in\NN}$ satisfies $\di^{2}p=\di^{2}\tilde{p}$, for the entire sequence we have
%\begin{equation*}
%\di^{2}p_n\rightharpoonup\di^{2}\tilde{p},
%\end{equation*}
%in $L^2(\Omega)$.
 Finally, from \eqref{ineq} we observe that $\|\di^{2}p_n\|_{L^2(\Omega)}\to \|\di^{2}p\|_{L^2(\Omega)}$, and hence \eqref{conv} holds.% (\textbf{MORE JUSTIFICATION HERE}).
\end{proof}

For this problem we take $r=\infty$ leading to the anisotropic version of TGV and use
%\begin{equation}\label{Q_P}
$M(p)=\mathcal{Q}_{\delta}(p,\alpha_{0})+ \mathcal{P}_{\delta}(\di p, \alpha_{1})$,
%\end{equation}
where
\begin{align}
\mathcal{P}_{\delta}(q,\alpha_{1})
&=\int_{\om} \sum_{i=1}^{d}(G_{\delta}(-(q_{i}+\alpha_{1}))+ G_{\delta}(q_{i}-\alpha_{1}))dx, \label{mathcal_P}\\
\mathcal{Q}_{\delta}(p,\alpha_{0})&=\int_{\om} \sum_{\substack{i,j=1\\i\le j}} (G_{\delta}(-(p_{ij}+\alpha_{0}))+G_{\delta}(p_{ij}-\alpha_{0}))dx, \label{mathcal_Q}
\end{align}
with $G_{\delta}:\RR\to \RR$ acting component-wise and defined by
\begin{equation}\label{G_d}
G_{\delta}(t)=
\begin{cases}
\frac{1}{2}t^{2}-\frac{\delta}{2}t+\frac{\delta^{2}}{6}, & \text{ if }t\ge \delta,\\
\frac{t^{3}}{6\delta}, & \text{ if } 0<t<\delta,\\
0, &  \text{ if } t\le 0, 
\end{cases}
\end{equation}
for $\delta>0$. 
%We define similarly $\mathcal{Q}_{\delta}$
%\begin{equation}\label{mathcal_Q}
%\mathcal{Q}_{\delta}(p,\alpha_{0})=\int_{\om} \sum_{\substack{i,j=1\\i\le j}} (G_{\delta}(-(p_{ij}+\alpha_{0}))+G_{\delta}(p_{ij}-\alpha_{0}))dx
%\end{equation} 
Summarizing and allowing for different regularization weights $\beta>0$, $\gamma>0$ (rather than $\beta=\gamma=\epsilon>0$),  \eqref{tgv_predualE} takes the form
\begin{equation}\label{TGV_predual_Newton}
\min_{p\in H_{0}^{2}(\om,\mathcal{S}^{d\times d})} \frac{\beta}{2} \|\Delta p\|_{L^{2}(\om,\mathcal{S}^{d\times d})}^{2} +\frac{\gamma}{2} \|p\|_{L^{2}(\om,\mathcal{S}^{d\times d})}^{2} + \frac{1}{2} \|T^{\ast}f-\di^{2}p\|_{B}^{2}+\frac{1}{\epsilon_{0}} \mathcal{Q}_{\delta}(p,\alpha_{0})+\frac{1}{\epsilon_{1}} \mathcal{P}_{\delta}(\di p,\alpha_{1}),
\end{equation}
where, for greater flexibility, we also use $\frac{1}{\epsilon_{0}}>0$  and  $\frac{1}{\epsilon_{1}}>0$, respectively, in front $\mathcal{Q}_{\delta}$ and $\mathcal{P}_{\delta}$. Note that for sufficiently small $\epsilon_{0}$, $\epsilon_{1}$, the quantities $\mathcal{Q}_{\delta}(p,\alpha_{0})$ and $\mathcal{P}_{\delta}(\di p,\alpha_{1})$ get small as well and $p$ and $\di p$ are expected to ``approximately'' satisfy the box constraints in \eqref{duality_TGV_predual}. 

The Euler-Lagrange equation for \eqref{TGV_predual_Newton} reads
\begin{equation}\label{TGV_EL}
g_{\mathrm{d}}(p,\alpha_{0},\alpha_{1}):=\beta \Delta^{2}p+\gamma p +\nabla^{2}B^{-1}\di^{2}p-\nabla^{2}B^{-1}T^{\ast}f+\frac{1}{\epsilon_{0}}Q_{\delta}(p,\alpha_{0})-\frac{1}{\epsilon_{1}}\nabla P_{\delta}(\di p,\alpha_{1})=0,
\end{equation}
in $[H_{0}^{2}(\om,\mathcal{S}^{d\times d})]^{\ast}$. 
Here, $P_{\delta}$  denotes
$P_{\delta}(q,\alpha_{1}):= G_{\delta}'(q-\alpha_{1})- G_{\delta}'(-q-
\alpha_{1})$, and
%\begin{equation}
%G_{\delta}'(t)=
%\begin{cases}
%t-\frac{\delta}{2}, & \text{ if } t\ge \delta,\\
%\frac{t^{2}}{2\delta}, & \text{ if } 0<t<\delta,\\
%0,& \text{ if } t\le 0,
%\end{cases}
%\end{equation}
$Q_{\delta}$ is analogous.% to $P_{\delta}$ with the use of $G_{\delta}'$.

\section{Two bilevel optimization schemes}\label{sec:Bilevel}

In this section we will adapt the bilevel optimization framework developed in \cite{hintermuellerPartI, hintermuellerPartII} in order to automatically select the regularization functions $\alpha_{0}$ and $\alpha_{1}$.  The main idea is to minimize a suitable upper level objective over both the image $u$ and the regularization parameters $\alpha_{0}$, $\alpha_{1}$ subject to $u$ being a solution to a (regularized)  TGV-based reconstruction problem with these regularization weights. 
%As we have already discussed one can instead  use equivalently the predual problem as a lower level problem.

It is useful to recall the definitions of  the localized residual $\mathrm{R}$ and the function $F$ as  stated  in the introduction:
 \begin{equation}\label{loc_res}
 \mathrm{R}u(x)=\int_{\om} w(x,y) (Tu-f)^{2}(y)dy,
 \end{equation}
 where $w\in L^{\infty}(\om\times \om)$ with $\int_{\om}\int_{\om}w(x,y)\,dxdy=1$ and
 \begin{equation}\label{ul_F}
 F(v):=\frac{1}{2} \int_{\om} \max(v-\overline{\sigma}^{2},0)^{2}dx+\frac{1}{2} \int_{\om} \min(v-\underline{\sigma}^{2},0)^{2}dx,
 \end{equation}
for some appropriately chosen $\underline{\sigma}^{2}, \overline{\sigma}^{2}$.
 We  next describe two bilevel schemes each one based on the two regularized TGV problems studied in the previous sections.

\subsection{Bilevel dual}
Noting that the {\it localized} residual $\mathrm{R}u$ can also be written in terms of the dual variable $p$ yielding 
 \begin{align}
\mathrm{R}u(x)&=R(\di^{2} p)(x):=\int_{\om} w(x,y)\left (TB^{-1}\di^{2} p -(TB^{-1}T^{\ast}-I)f \right )^{2}dy.\label{upper_level_div2p}
 \end{align}
The duality based bilevel TGV problem is defined as follows: 
  \begin{equation}\label{bilevelTGV}\tag{$\mathbb{P}_{\tgv}$-d}\left \{
 \begin{aligned}
 &\min\; J_{\mathrm{d}}(p,\alpha_{0},\alpha_{1}):= F(R(\di^{2} p))+\frac{\lambda_{0}}{2}\|\alpha_{0}\|_{H^{1}(\om)}^{2}
 +\frac{\lambda_{1}}{2}\|\alpha_{1}\|_{H^{1}(\om)}^{2},\\
 &\qquad \text{over } (p,\alpha)\in H_{0}^{2}(\om,\mathcal{S}^{d\times d})\times \mathcal{A}_{ad}^{0}\times \mathcal{A}_{ad}^{1},\\
& \text{subject to}\quad  p= \underset{p\in H_{0}^{2}(\om,\mathcal{S}^{d\times d})}{\operatorname{argmin}}  \frac{\beta}{2} \|\Delta p\|_{L^{2}(\om,\mathcal{S}^{d\times d})}^{2} +\frac{\gamma}{2} \|p\|_{L^{2}(\om,\mathcal{S}^{d\times d})}^{2} + \frac{1}{2} \|T^{\ast}f-\di^{2}p\|_{B}^{2}\\
 &\hspace{2.5cm}+\frac{1}{\epsilon_{0}} \mathcal{Q}_{\delta}(p,\alpha_{0})+\frac{1}{\epsilon_{1}} \mathcal{P}_{\delta}(\di p,\alpha_{1}).
 \end{aligned}\right.
 \end{equation}
Here, box constraints on $\alpha_i$ are contained in
 \begin{align}
 %\begin{split}
 \mathcal{A}_{ad}^{i}:=\{\alpha_{i}\in H^{1}(\om):\; \underline{\alpha}_{i}\le \alpha_{i} \le \overline{\alpha}_{i}\},\quad i=0,1,\label{A01_ad}
 % \mathcal{A}_{ad}^{1}&=\{\alpha_{1}\in H^{1}(\om):\; \underline{\alpha}_{1}\le \alpha_{1} \le \overline{\alpha}_{1}\},
% \end{split}
 \end{align}
with $\underline{\alpha}_{i},\overline{\alpha}_{i}\in L^{2}(\om)$ and $0<\underline{\epsilon}\le \underline{\alpha}_{i}(x)<\overline{\alpha}_{i}(x)-\overline{\epsilon}$ in $\om$ for some $\underline{\epsilon},\overline{\epsilon}>0$, $i=0,1$. Note that the $H^{1}$ regularity on the parameter functions $\alpha_{0}, \alpha_{1}$ facilitates the existence and differential sensitivity analysis as established in \cite{hintermuellerPartI, hintermuellerPartII} for the TV case. Note, however, that this setting does not guarantee a priori that these functions belong to $C(\overline{\om})$, the regularity required for applying the dualization results of the previous sections. Nevertheless, under mild data assumptions, one can make use of a regularity result of the  $H^{1}$--projection  onto the sets $ \mathcal{A}_{ad}^{0}$ and $ \mathcal{A}_{ad}^{1}$; see  \cite[Corollary 2.3]{hintermuellerPartII}. In particular, if $\underline{\alpha}_{0}$, $\overline{\alpha}_{0}$, $\underline{\alpha}_{1}$, $\overline{\alpha}_{1}$ as well as the initializations for $\alpha_{1}$ and $\alpha_{0}$ are constant functions, then along the projected gradient  iterations, compare Algorithms \ref{alg:bilevelTGV} and \ref{alg:bilevelTGV_pd},  the weights are guaranteed to belong to $H^{2}(\om)$ which (for dimension $d\leq 2$) embeds into $C(\overline{\om})$.

 We briefly note that in the TV case it can be shown \cite{structuralTV, bilevel_handbook} that $W^{1,1}$ regularity for the regularization parameter $\alpha$ suffices to establish a dualization framework. A corresponding result is not yet known for TGV, even though one expects that it could be shown by similar arguments. Hence, here we will also make use of the $H^{1}$--projection regularity result as described above.
 % In fact as we will see in the  numerics for TGV, we will treat only $\alpha_{1}$ as a spatially varying function and $\alpha_{0}$ a constant.
 
Regarding the box constraints \eqref{A01_ad} in \cite{DELOSREYES2016464} it was shown that for a PSNR-optimizing upper level objective $\tilde{J}(u,\alpha)=\|u(\balpha)-f\|_{L^{2}(\om)}^{2}$ subject to $H^{1}$ and Huber regularized TV and TGV  denoising problems, under some mild conditions on the data $f$, the optimal scalar solutions $\alpha$ and $(\alpha_{0},\alpha_{1})$ are strictly positive. As depicted in Figure \ref{fig:psnr_ul_comp} the upper level objective discussed here appears close to optimizing the PSNR, keeping the parameters strictly positive via  \eqref{A01_ad} seems, however, necessary for the time being. 

We now briefly discuss how to treat the bilevel problem \eqref{bilevelTGV}. Let $(\alpha_{0},\alpha_{1})\mapsto p(\alpha_{0},\alpha_{1})$ denote the solution map for the lower level problem, equivalently of the optimality condition \eqref{TGV_EL}. Then the problem \eqref{bilevelTGV} admits the following reduced version
\begin{alignat}{4}
&\min \;\hat{J}_{\mathrm{d}}(\alpha_{0},\alpha_{1})&&:=J_{\mathrm{d}}(p(\alpha_{0},\alpha_{1}),\alpha_{0},\alpha_{1}) \quad &&\text{over }\alpha_{0}\in \mathcal{A}_{ad}^{0},\; \alpha_{1}\in \mathcal{A}_{ad}^{1}.\label{bilevelTGV_reduced}
\end{alignat}
Similarly to the  TV case  \cite{hintermuellerPartI}, one can show that the reduced functional $\hat{J}_{\tgv}: H^{1}(\om)\times H^{1}(\om) \to \RR$ is differentiable.  
%We note that from now on we will consider $\alpha_{0}$ be a scalar, that is, in this case $ \mathcal{A}_{ad}^{0}=\{\alpha_{0}\in \RR:\; \underline{\alpha}_{0}\le \alpha_{0} \le \overline{\alpha}_{0}\}$ where $0<\underline{\alpha}_{0}, \overline{\alpha}_{0}\in \RR$ and also we set $\lambda_{0}=0$ in \eqref{bilevelTGV}.
We can then apply the KKT framework in Banach space \cite{Zowe1979}: 
\begin{equation}\label{con_min_Banach}\left \{
\begin{aligned}
&\text{minimize}\quad  T(\mathrm{x})\quad\text{over }\mathrm{x}\in X,\\
&\text{subject to}\quad \mathrm{x}\in \mathcal{C} \text{ and } g(\mathrm{x})=0,
\end{aligned}
\right.
\end{equation} 
where $V,\mathcal{A}, Z$ are Banach spaces, $X=V\times \mathcal{A}$,  $T:X\to \RR$ and $g:X\to Z$ are Fr\'echet differentiable and continuous differentiable functions, respectively, and $\mathcal{C}\subset X$ is a non-empty, closed convex set. In the bilevel TGV problem \eqref{bilevelTGV} we have $V=H_{0}^{2}(\om,\mathcal{S}^{d\times d})$, $\mathcal{A}=H^{1}(\om)\times H^{1}(\om)$, $Z=V^{\ast}$, $\mathcal{C}=V\times \mathcal{A}_{ad}^{0}\times \mathcal{A}_{ad}^{1}$, $\mathrm{x}=(p,\alpha_{0},\alpha_{1})$, $T(\mathrm{x})=J_{\tgv}(p,\alpha_{0},\alpha_{1})$ and
\[g(\mathrm{x})=g_{\mathrm{d}}(\mathrm{x}):=\beta \Delta^{2}p+\gamma p +\nabla^{2}B^{-1}\di^{2}p-\nabla^{2}B^{-1}T^{\ast}f+\frac{1}{\epsilon_{0}}Q_{\delta}(p,\alpha_{0})-\frac{1}{\epsilon_{1}}\nabla P_{\delta}(\di p,\alpha_{1}).\]

Similarly to \cite{hintermuellerPartI}, for an optimal triplet $(\tilde{p},\tilde{\alpha}_{0},\tilde{\alpha}_{1})$ we can further show that there exists an adjoint variable $q\in H_{0}^{2}(\om,\mathcal{S}^{d\times d})$ (Lagrange multiplier) satisfying the following: 
\begin{align}
\begin{split}
\langle (\di^{2})^{\ast} J_{0}'(\di^{2}\tilde{p},p) \rangle_{V^{\ast},V}
+\langle \beta \Delta q +\gamma q+ \nabla^{2}B^{-1}\di^{2}q
+\frac{1}{\epsilon_{0}} D_{1} Q_{\delta}(\tilde{p},\tilde{\alpha}_{0})q&\\
-\frac{1}{\epsilon_{1}}D_{1}\nabla P_{\delta}(\tilde{p},\tilde{\alpha}_{1})q,p
 \rangle_{V^{\ast},V}&=0,\label{TGV_as_2a}
 \end{split}\\
 \langle  \lambda_{1} (-\Delta+I)\tilde{\alpha}_{1}-\frac{1}{\epsilon_{1}}(D_{2}\nabla P_{\delta}(\tilde{p},\tilde{\alpha}_{1}))^{\ast}q,\alpha_{1}-\tilde{\alpha}_{1}\rangle_{H^{1}(\om)^{\ast}, H^{1}(\om)}&\ge 0,\label{TGV_as_2b}\\
 \langle \lambda_{0}(-\Delta+I)\tilde{\alpha}_{0}+ \frac{1}{\epsilon_{0}} (D_{2} Q_{\delta}(\tilde{p},\tilde{\alpha}_{0}))^{\ast}q,\alpha_{0}-\tilde{\alpha}_{0} \rangle_{H^{1}(\om)^{\ast}, H^{1}(\om)}&\ge 0,\label{TGV_as_2c}
\end{align}
for all $p\in V$, $\alpha_{0}\in \mathcal{A}_{ad}^{0}$ and $\alpha_{1}\in\mathcal{A}_{ad}^{1}$. Here we have used the notation $J_{0}:=F(R\cdot)$, and $D_{1}$ as well as $D_{2}$ denote derivatives with respect to the first and second arguments, respectively. The derivative of the reduced objective is then computed 
as 
\begin{align}
\begin{split}
\hat{J}_{\mathrm{d}}'(\alpha_{0},\alpha_{1})
&=(\lambda_{1}(-\Delta+I)\alpha_{1},\lambda_{0}(-\Delta+I)\alpha_{0})\\
 &\;\quad+ \left(\frac{1}{\epsilon_{0}} (D_{2} Q_{\delta}(\tilde{p},\alpha_{0})), -\frac{1}{\epsilon_{1}}(D_{2}\nabla P_{\delta}(\tilde{p},\alpha_{1}))\right)^{\ast}q(\alpha_{0},\alpha_{1}),\label{TGV_reduced}
 \end{split}
\end{align}
where again $q(\alpha_{0},\alpha_{1})$ solves \eqref{TGV_as_2a} for $\tilde{\alpha}_{0}=\alpha_{0}$, $\tilde{\alpha}_{1}=\alpha_{1}$ and $\tilde{p}=p(\alpha_{0},\alpha_{1})$.

We have $\hat{J}_{\mathrm{d}}'(\alpha_{0},\alpha_{1})\in (H^{1}(\om)\times H^{1}(\om))^{\ast}$.  In order to obtain the gradient of this functional we apply the inverse Riesz map as follows:
\begin{align}
\nabla \hat{J}_{\mathrm{d}}(\alpha_{0},\alpha_{1})&:=\left(\mathcal{R}_{H^{1}}^{-1} P_{1}\hat{J}_{\mathrm{d}}'(\alpha_{0},\alpha_{1}),\mathcal{R}_{H^{1}}^{-1} P_{2}\hat{J}_{\mathrm{d}}'(\alpha_{0},\alpha_{1})\right) \in H^{1}(\om)\times H^{1}(\om),
\end{align}
where for $(r_{1},r_{2})\in H^{1}(\om)\times H^{1}(\om)$ we have
\[\hat{J}_{\mathrm{d}}'(\alpha_{0},\alpha_{1})[r_{1},r_{2}]= P_{1}\hat{J}_{\mathrm{d}}'(\alpha_{0},\alpha_{1}) [r_1] +  P_{2}\hat{J}_{\mathrm{d}}'(\alpha_{0},\alpha_{1})[r_2],\]
with $P_{1}, P_{2}$ denoting the first and the second component of the derivative of the reduced objective. Equipped with this gradient, a gradient-related descent scheme as in \cite[Algorithm 1]{hintermuellerPartII} can be set up for our bilevel TGV problem. This will be discussed further in Section \ref{sec:numerics_algorithm_dual} below.

\subsection{An MPEC}

Utilizing the primal-dual first-order optimality characterization \eqref{reg_opt1}--\eqref{reg_opt4} of the solution to the lower level problem, we arrive at the following {\it mathematical program with equilibrium constraints} (MPEC, for short):
 \begin{equation}\label{bilevelTGV_pd}\tag{$\mathbb{P}_{\tgv}$-p.d.}\left \{
 \begin{aligned}
 &\min \;J_{\mathrm{pd}}(u,\alpha_{0},\alpha_{1}):= F(\mathrm{R}(u))+\frac{\lambda_{0}}{2}\|\alpha_{0}\|_{H^{1}(\om)}^{2}
 +\frac{\lambda_{1}}{2}\|\alpha_{1}\|_{H^{1}(\om)}^{2},\\
&\qquad \text{over } (u,\alpha_{0},\alpha_{1})\in H^{1}(\om)\times \mathcal{A}_{ad}^{0}\times \mathcal{A}_{ad}^{1},\\
& \begin{aligned}
	\text{ subject to }\qquad \qquad\qquad
	Bu-\mu \Delta u+\nabla^{\ast}q -T^{\ast}f&=0, \\
\alpha w- \alpha \Delta w -q+E^{\ast}p&=0, \\
\mathrm{max}_{\delta} (|\nabla u -w|,\gamma_{1})q-\alpha_{1}(\nabla u-w)&=0, \\
\mathrm{max}_{\delta}(|Ew|,\gamma_{0})p-\alpha_{0}Ew&=0.
	\end{aligned}
 \end{aligned}\right.
%& \text{such that}\quad Bu-\mu \Delta u+\nabla^{\ast}q -T^{\ast}f=0, \\
%&\hspace{2.0cm} \alpha w- \alpha \Delta w -q+E^{\ast}p=0,\\
%&\hspace{2.0cm} \mathrm{max}_{\delta} (|\nabla u -w|,\gamma_{1})q-\alpha_{1}(\nabla u-w)=0,\\
%&\hspace{2.0cm} \mathrm{max}_{\delta}(|Ew|,\gamma_{0})p-\alpha_{0}Ew=0,
% \end{aligned}\right.
 \end{equation}
 In order to avoid constraint degeneracy and for the sake of differentiability, we employ here a smoothed version $\max_{\delta}(\cdot,\gamma)$ of $\max$ and its derivative, denoted by $\mathcal{X}_{\delta}$, defined as follows for $r\ge 0$ and for $\frac{\delta}{2}<\gamma$:

\begin{equation*}
\mathrm{max}_{\delta}(r,\gamma)=
\begin{cases}
\gamma \\
\frac{1}{2\delta} (r+\frac{\delta}{2}-\gamma)^{2} +\gamma,\\
r  
\end{cases}
\quad 
\mathcal{X}_{\delta}(r,\gamma)=
\begin{cases}
0 &\;\;\;\; \text{ if } r\le \gamma-\frac{\delta}{2},\\
\frac{1}{\delta} (r+\frac{\delta}{2}-\gamma) &\;\;\;\; \text{ if } \gamma-\frac{\delta}{2} < r< \gamma+\frac{\delta}{2},\\
1 &\;\;\;\; \text{ if } r> \gamma+\frac{\delta}{2}.
\end{cases}
\end{equation*}

%\begin{equation*}
%\mathrm{max}_{\delta}(r,\gamma)=
%\begin{cases}
%\gamma & \text{ if } r\le \gamma-\frac{\delta}{2},\\
%\frac{1}{2\delta} (r+\frac{\delta}{2}-\gamma)^{2} +\gamma& \text{ if } \gamma-\frac{\delta}{2} < r< \gamma+\frac{\delta}{2},\\
%r  & \text{ if } r> \gamma+\frac{\delta}{2},
%\end{cases}
%\end{equation*}
%
%\begin{equation*}
%\mathcal{X}_{\delta}(r,\gamma)=
%\begin{cases}
%0 & \text{ if } r\le \gamma-\frac{\delta}{2},\\
%\frac{1}{\delta} (r+\frac{\delta}{2}-\gamma) & \text{ if } \gamma-\frac{\delta}{2} < r< \gamma+\frac{\delta}{2},\\
%1 & \text{ if } r> \gamma+\frac{\delta}{2}.
%\end{cases}
%\end{equation*}

We treat \eqref{bilevelTGV_pd} similarly to \eqref{bilevelTGV} via the KKT  framework, with $V=H^{1}(\om)$, $\mathcal{A}$, $\mathcal{C}$ as before, $X=H^{1}(\om)\times H^{1}(\om,\RR^{d})\times L^{2}(\om,\RR^{d}), L^{2}(\om,\mathcal{S}^{d\times d})$ and $Z=H^{1}(\om)^{\ast}\times H^{1}(\om,\RR^{d})^{\ast}\times L^{2}(\om,\RR^{d}), L^{2}(\om,\mathcal{S}^{d\times d})$. Here $g_{\mathrm{pd}}: X\to Z$ is defined by the optimality conditions \eqref{reg_opt1}--\eqref{reg_opt4}.

We will skip here the  proofs for the differentiability of the functions $g$ and the reduced objective $J$ as well as the existence proofs for \eqref{bilevelTGV} and \eqref{bilevelTGV_pd}. These results can be shown similarly to the corresponding assertions for TV; see \cite{hintermuellerPartI, hintermuellerPartII}.

\subsection{Newton solvers for the lower level problems}

\subsubsection{Dual TGV Newton}
Before we proceed to devising of a projected gradient algorithm for the solution of both aforementioned bilevel problems, we  discuss here two Newton algorithms for the solutions of the corresponding lower level problems. 

We first state the corresponding function space Newton method for the solution of  \eqref{TGV_EL}; see Algorithm \ref{TGV_Newton}. 

\begin{algorithm}[!h]
  \caption{\newline Function space Newton algorithm for the solution of the regularized TGV dual problem \eqref{TGV_predual_Newton} \label{TGV_Newton}}
  \begin{algorithmic}
    %\Statex {\textbf{Input}:  }
    %\Statex {\textbf{Initialise}: }
    \While{some stopping criterion is not satisfied}
     \Statex{Find $\delta p^{k}\in H_{0}^{2}(\om,\mathcal{S}^{d\times d})$ such that the following equation is satisfied in $[H_{0}^{2}(\om,\mathcal{S}^{d\times d})]^{\ast}$}:
     \Statex{
     \begin{align*}
     \nabla^{2} B^{-1}\di^{2} \delta p^{k}+\beta \Delta^{2} \delta p^{k} + \gamma \delta p
     &+\frac{1}{\epsilon_{0}} \left ( G_{\delta}''(p^{k}-\alpha_{0})+ G_{\delta}''(-p^{k}-      \alpha_{0})\right)\delta p^{k}\\
     & -\frac{1}{\epsilon_{1}}\nabla \left ( G_{\delta}''(\di p^{k}-\alpha_{1})+ G_{\delta}''(-\di p^{k}-\alpha_{1})\right)\di \delta p^{k}
     =-G(p^{k}),
     \end{align*}
     }
      \Statex{Update $p^{k+1}$:}
       \Statex{\[\qquad p^{k+1}=p^{k}+\delta p^{k}\]}
 \EndWhile
  \end{algorithmic}
\end{algorithm}
Here $G_{\delta}''$ denotes the second derivative of $G_{\delta}$ in \eqref{G_d}. Due to the regularization of $p$ in \eqref{TGV_EL} the algorithm admits a local superlinear convergence; see \cite{HIK,HiKu_pf}. Moreover, similar to \cite{HiUl} it can be shown that the solver is mesh (i.e. image resolution) independent.
%\begin{equation}
%G_{\delta}''(t)=
%\begin{cases}
%1, & \text{ if } t\ge \delta,\\
%\frac{t}{\delta}, & \text{ if } 0<t<\delta,\\
%0,& \text{ if } t\le 0,
%\end{cases}
%\end{equation}
%with that function applied pointwise. %Here $\mathbf{1}$ denotes  the vector with values equal to one. Its multiplication with $\alpha_{0}$ and $\alpha_{1}$ is regarded pointwise.

%We mention a few more words about the discrete version  of Algorithm \ref{TGV_Newton}.
 A few words on the discrete version of  Algorithm \ref{TGV_Newton} are in order. Images ($d=2$) are considered as elements of $U_{h}:=\{u\, |\,u:\om_{h}\to \RR\}$ where $\om_{h}=\{1,2,\ldots,n\}\times \{1,2,\ldots,m\}$ is a discrete cartesian grid that corresponds to the image pixels. The mesh size, defined as the distance between the grid points, is set to $h=1/\sqrt{nm}$.
We define the associated discrete function spaces
$W_{h}=U_{h}\times U_{h}$, $V_{h}=U_{h}\times U_{h} \times U_{h}$,
so that  $p\in V_{h}$ with  $p=(p^{11}, p^{12}, p^{22})$.
For the discrete gradient and divergence we have, $\nabla: W_{h}\to V_{h}$ and $\di: V_{h}\to W_{h}$ satisfying the adjoint relation $\nabla=-\di^{\top}$. We refer the reader to Appendix \ref{sec:app} for precise definitions of these operators as well as for a detailed description of the other discrete second- order differential operators, $\nabla^{2}:U_{h}\to V_{h}$, $\di^{2}: V_{h}\to U_{h}$, the vector bi-Laplacian $\Delta^{2}: V_{h}\to V_{h}$, as well as the operator $\nabla^{2}\di ^{2}:V_{h}\to V_{h}$. We note here that these operators must be defined with the correct boundary conditions in order to reflect the boundary conditions imposed on $p\in H_{0}^{2}(\om,\mathcal{S}^{2\times 2})$.

\subsubsection{Primal-Dual TGV Newton}

Next we briefly describe the primal-dual TGV Newton method for the solution of the first-order optimality conditions in Proposition \ref{lbl:optimality_reg2} written here for the denoising case, for the sake of readability only:
\begin{align}
u-\mu \Delta u - \di q -f&=0, \label{lbl:opt1}\\
\alpha w -\alpha \Delta w -q-\di p&=0, \label{lbl:opt2}\\
\mathrm{max}_{\delta}(|\nabla u -w|,\gamma_{1})q - \alpha_{1} (\nabla u -w)&=0, \label{lbl:opt3}\\
\mathrm{max}_{\delta} (|Ew|,\gamma_{0})p-\alpha_{0}E w&=0.\label{lbl:opt4}
\end{align}

For the discretized versions of the above differential operators, we use the standard five-point stencils with zero Neumann boundary conditions. Note that  these act on the primal variables $u$ and $w$,  which satisfy natural boundary conditions in contrast to the dual variable. The discretized symmetrized gradient $Ew$ is defined as $\frac{1}{2}(\nabla  w+ (\nabla w)^{\top})$.

The system of equations \eqref{lbl:opt1}--\eqref{lbl:opt4} can be shortly written as $g_{\mathrm{pd}}(\mathrm{x})=0$, where $\mathrm{x}=(u,w,q,p)$. We compute the  derivative of $g_{\mathrm{pd}}$ at a point $\mathrm{x}=(u,w,q,p)$ as the following block-matrix:

\[Dg_{\mathrm{pd}}(\mathrm{x})=Dg_{\mathrm{pd}}(u,w,q,p)=
\begin{bmatrix}
   A    & B \\
    C & D
\end{bmatrix},
\]
where
\begin{equation}\label{ABD}
A=
\begin{bmatrix}
   I-\mu\Delta    & 0 \\
    0 & \alpha(I-\Delta)
\end{bmatrix},\;\;
B=
\begin{bmatrix}
   -\di     & 0 \\
    -I & -\di
\end{bmatrix},\;\;
D=
\begin{bmatrix}
   \mathrm{max}_{\delta}(|\nabla u-w |,\gamma_{1}) & 0 \\
    0 & \mathrm{max}_{\delta}(|Ew|,\gamma_{0})
\end{bmatrix},
\end{equation}
\begin{equation}\label{CCC}
C=
\begin{bmatrix}
-\alpha_{1}\nabla+q\mathcal{X}_{\delta}(|\nabla u-w|, \gamma_{1}) \frac{\nabla u-w}{|\nabla u-w|} \cdot \nabla
& \alpha_{1}I+q\mathcal{X}_{\delta}(|\nabla u-w|, \gamma_{1}) \frac{\nabla u-w}{|\nabla u-w|}\cdot(-I)\\
0    & -\alpha_{0}E+p\mathcal{X}_{\delta}(|Ew|, \gamma_{0})\frac{Ew}{|Ew|}\cdot E    \\ 
\end{bmatrix}.
\end{equation}
Given $\mathrm{x}^k$, the Newton iteration for solving the system of equations  \eqref{lbl:opt1}--\eqref{lbl:opt4}, or $g_{\mathrm{pd}}(\mathrm{x})=0$ for short, reads
\[\mathrm{x}^{k+1}=\mathrm{x}^{k}-D\mathcal{F}(\mathrm{x}^{k})^{-1}\mathcal{F}(\mathrm{x}^{k}),\]
which can also be written as 
\begin{equation}\label{snn1_linear_system}
Dg_{\mathrm{pd}}(\mathrm{x}^{k})\mathrm{x}^{k+1}=Dg_{\mathrm{pd}}(\mathrm{x}^{k})\mathrm{x}^{k}-g_{\mathrm{pd}}(\mathrm{x}^{k}).
\end{equation}
Here it is convenient to introduce the notation
\[Dg_{\mathrm{pd}}(\mathrm{x}^{k})=Dg_{\mathrm{pd}}(u^{k},w^{k},q^{k},p^{k})=
\begin{bmatrix}
   A    & B \\
    C_{k} & D_{k}
\end{bmatrix}
\]
since only the submatrices $C$ and $D$ depend on $k$. Note that the righthand side $Dg_{\mathrm{pd}}(\mathrm{x}^{k})\mathrm{x}^{k}-g_{\mathrm{pd}}(\mathrm{x}^{k})$ of the linear system \eqref{snn1_linear_system} can be written as 
\[Dg_{\mathrm{pd}}(\mathrm{x}^{k})\mathrm{x}^{k}-\mathcal{F}(\mathrm{x}^{k})=
\left (
\begin{array}{c}
b_{1}^{k}\\
b_{2}^{k}
\end{array}
\right ),
\]
where
\[b_{1}^{k}=\left (f,0 \right)^\top,\quad \text{ and }\quad b_{2}^{k}=\left (q^{k}\mathcal{X}_{\delta}(|\nabla u^{k}-w^{k}|, \gamma_{1})|\nabla u^{k}-w^{k}|, p^{k} \mathcal{X}_{\delta}(|Ew^{k}|, \gamma_{0}) |Ew^{k}| \right)^\top.\]
Notation-wise, the components that appear in $b_{2}^{k}$ should be regarded as the diagonals of the 
corresponding diagonal matrices that we mentioned before, multiplied component-wise. By introducing the notation $\mathrm{x}_{1}^{k}=(u^{k},w^{k})^\top$, $\mathrm{x}_{2}^{k}=(q^{k},p^{k})^\top$, the Newton system \eqref{snn1_linear_system} can be written as 
\begin{equation}\label{snn1_linear_system_detailed}
\begin{bmatrix}
   A    & B \\
    C_{k} & D_{k}
\end{bmatrix}
\left (
\begin{array}{c}
\mathrm{x}_{1}^{k+1}\\
\mathrm{x}_{2}^{k+1}
\end{array}
\right )=
\left (
\begin{array}{c}
b_{1}^{k}\\
b_{2}^{k}
\end{array}
\right ).
\end{equation}
The above system can be simplified utilizing the Schur complement: First solve for the primal variables $\mathrm{x}_{1}^{k+1}=(u^{k+1},w^{k+1})$ and then recover the dual ones $\mathrm{x}_{2}^{k+1}=(q^{k+1},p^{k+1})$. This yields
\begin{align*}
(A-B D_{k}^{-1} C_{k})\mathrm{x}_{1}^{k+1}&= b_{1}^{k}-B D_{k}^{-1}b_{2}^{k},\\
\mathrm{x}_{2}^{k+1}&=D_{k}^{-1}(b_{2}^{k}-C_{k}\mathrm{x}_{1}^{k+1}).
\end{align*}
The folllowing result then holds.
\newtheorem{pos_def}[Wdiv]{Lemma}
\begin{pos_def}\label{lbl:pos_def}
If $(q^{k},p^{k})$ belong to the feasible set, i.e., $|q^{k}|\le \alpha_{1}$ and $|p^{k}|\le \alpha_{0}$ component-wise, then the matrix $S_{k}:=(A-BD_{k}^{-1}C_{k})$ is positive definite and for the minimum eigenvalues we have $\lambda_{\min}(S_{k})\ge \lambda_{\min}(A)>0$. Furthermore, $S_{k}^{-1}$ is bounded independently of $k$.
\end{pos_def}

The proof of Lemma \ref{lbl:pos_def} follows the steps of the analogous proof in \cite{stadler} and is hence omitted. 
Summarizing, the Newton method for the solution of the \eqref{lbl:opt1}-\eqref{lbl:opt4} is outlined in Algorithm \ref{TGV_Newton_pd}. Here we have followed \cite{stadler} and project in every iteration the variables $q,p$ onto the feasible sets such that the result of Lemma \ref{lbl:pos_def} holds.

\begin{algorithm}[!h]
  \caption{\newline Newton algorithm for the solution of the regularized TGV primal problem \eqref{tgv_primal_reg2} \label{TGV_Newton_pd}}
  \begin{algorithmic}
    %\Statex {\textbf{Input}:  }
    %\Statex {\textbf{Initialise}: }
    \While{some stopping criterion is not satisfied}
     \Statex{Solve the linear system for $\mathrm{x}_{1}^{k+1}=(u^{k+1},w^{k+1})$}
     \Statex{
     \[(A-B D_{k}^{-1} C_{k})\mathrm{x}_{1}^{k+1}= b_{1}^{k}-B D_{k}^{-1}b_{2}^{k}\]}
      \Statex{Update $\tilde{\mathrm{x}}_{2}^{k+1}=(\tilde{q}^{k+1},\tilde{p}^{k+1})$ as follows}
       \Statex{\[\tilde{\mathrm{x}}_{2}^{k+1}=D_{k}^{-1}(b_{2}^{k}-C_{k}\mathrm{x}_{1}^{k+1})\]}
        \Statex{Compute $q^{k+1}$, $p^{k+1}$ as  projections of $\tilde{q}^{k+1}$, $\tilde{p}^{k+1}$  onto the feasible sets $\{q:|q|\le \alpha_{1}\}$, $\{p:|p|\le \alpha_{0}\}$} 
 \EndWhile
  \end{algorithmic}
\end{algorithm}
The projections onto the feasible sets  are defined respectively as
\begin{equation}\label{projections}
q=\frac{\tilde{q}}{\max\left \{1, \frac{|\tilde{q}|}{\alpha_{1}} \right \}},\quad p=\frac{\tilde{p}}{\max\left \{1, \frac{|\tilde{p}|}{\alpha_{0}} \right \}},
\end{equation}
with the equalities above to be considered component-wise. 
%$|\cdot|$ denote the Euclidean norm of the vectors $\tilde{q}$, $\tilde{p}$ also considered pointwise.

\section{Numerical implementation}\label{sec:numerics}
In this section we will describe two projected gradient algorithms for the solution of the discretized versions of the two bilevel problems \eqref{bilevelTGV} and \eqref{bilevelTGV_pd}. Note that for most of the experiments we will keep $\alpha_{0}$ a scalar -- this is justified by the numerical results; see the relevant discussion later on. 

\subsection{The numerical algorithm for \eqref{bilevelTGV}}\label{sec:numerics_algorithm_dual}

We now describe our strategy for solving  the discretized version of the bilevel TGV problem \eqref{bilevelTGV}.
For this purpose, we introduce the discrete versions of differential operators and norms that appear in the upper level objective of \eqref{bilevelTGV}. We will make use of the discrete Laplacian with zero Neumann boundary conditions $\Delta_{N}: U_{h}\to U_{h}$ which is used to act on the weight function $\alpha_{1}$. These are the desired boundary conditions for $\alpha_{1}$ as dictated by the regularity result for the $H^{1}$--projection  in \cite[Corollary 2.3]{hintermuellerPartII}.  For that we use the standard Laplacian stencil, setting the function values of ghost grid points to be the same with  the function value of the nearest grid point in $\om_{h}$. 
%\begin{figure}[!h]
%\begin{tikzpicture}
%\node at (9.0,0.2) {$\frac{1}{h^{2}}\;\times$};
%  \node[shape=circle,draw, minimum size=13pt, inner sep=0pt, fill=white]  at (9.8,0.2) {\tiny{1}};
% \node[shape=circle,draw, minimum size=13pt, inner sep=0pt, fill=mygray]  at (10.4,0.2) {\tiny{-4}};
% \node[shape=circle,draw, minimum size=13pt, inner sep=0pt, fill=white]  at (11.0,0.2) {\tiny{1}};
% \node[shape=circle,draw, minimum size=13pt, inner sep=0pt, fill=white]  at (10.4,-0.4) {\tiny{1}};
% \node[shape=circle,draw, minimum size=13pt, inner sep=0pt, fill=white]  at (10.4,0.8) {\tiny{1}};
% \end{tikzpicture}
% %\caption{Finite difference stencils that constitute the discrete bi-Laplacian $\Delta^{2}$}
% %\label{Stencil_biLap}
% \end{figure}
%
%\noindent
%setting the function values of ghost grid points to be the same with  the function value of the nearest grid point in $\om_{h}$.
 For a function $u\in U_{h}$ we define the discrete $\ell^{2}$ norm as 
\[\|u\|_{\ell^{2}(\om_{h})}^{2}=h^{2}\sum_{(i,j)\in\om_{h}} |u_{i,j}|^{2}.\]
For the discrete  $H^{1}$ norm applied to the weight function $\alpha_{1}$ we use
\[\|\alpha_{1}\|_{H^{1}(\om_{h})}=h\sqrt{\alpha_{1}^{\top} (I-\Delta_{N})\alpha_{1}},\]
while the dual norm is defined as 
\[\|r\|_{H^{1}(\om_{h})^{\ast}}=\|(I-\Delta_{N})^{-1}r\|_{H^{1}(\om_{h})}=h\sqrt{r^{\top}(I-\Delta_{N})^{-1}r}\]
based on the $H^{1}\to H^{1}(\om)^{\ast}$ Riesz map $\alpha\mapsto r=(I-\Delta_{N})\alpha$. We will also make use if the following version of the discrete dual $H_{0}^{2}(\om_{h})^{\ast}$:
\[\|v\|_{H_{0}^{2}(\om_{h})^{\ast}}=h\sqrt{v^{\top} (I+\Delta^{2})^{-1}v}.\]
For the discrete version of the averaging filter in the definition of the localized residuals \eqref{loc_res} we use a filter of size $n_{w}\times n_{w}$, with entries of equal value whose sum is equal to one. With these definitions the discrete version of the bilevel TGV \eqref{bilevelTGV} is the following:

  \begin{equation}\tag{$\mathbb{P}_{\tgv}^{\,h}$-d}\label{bilevelTGV_h}\left \{
 \begin{aligned}
 &\text{minimize } \frac{1}{2}\|(R(\di^{2} p)-\overline{\sigma}^{2})^{+}\|_{\ell^{2}(\om_{h})}^{2}
 +\frac{1}{2}\|(\underline{\sigma}^{2}-R(\di^{2} p))^{+}\|_{\ell^{2}(\om_{h})}^{2}
 +\frac{\lambda}{2}\|\alpha_{1}\|_{H^{1}(\om_{h})}^{2},\\
 &\qquad \text{over } (p,\alpha_{0},\alpha_{1})\in V_{h}\times (\mathcal{A}_{ad}^{0})_{h}\times (\mathcal{A}_{ad}^{1})_{h},\\
 &\text{subject to}\quad \beta\Delta^{2}p+\gamma p+\nabla^{2} B^{-1}\di^{2}p  -\nabla^{2}B^{-1}  T^{\ast}f+\frac{1}{\epsilon_{0}} Q_{\delta}(p,\alpha_{0})-\frac{1}{\epsilon_{1}}\nabla P_{\delta}(\di p,\alpha_{1})=0.
 \end{aligned}\right.
 \end{equation}
Here, $(\cdot)^+$ is applied in a component-wise way and we  have 
 \begin{align*}
  (\mathcal{A}_{ad}^{0})_{h}&=
  \{\alpha_{0}\in \RR: \underline{\alpha}_{0}\le \alpha_{0}\le \overline{\alpha}_{0}\},\\
 (\mathcal{A}_{ad}^{1})_{h}&=
 \{\alpha\in U_{h}:\; \underline{\alpha}_{1}\le (\alpha_{1})_{i,j} \le \overline{\alpha}_{1},\; \text{for all }(i,j)\in \om_{h}\}.\label{A_ad_h_tgv}
 \end{align*}
 Note that the discrete penalty functions $P_{\delta}: W_{h}\to W_{h}$ and $Q_{\delta}: V_{h}\to V_{h}$ are defined straightforwardly by componentwise application of the function $G_{\delta}'$. 
 
 Regarding the choice of the lower and upper bounds for the local variance $\underline{\sigma}^{2}$ and $\overline{\sigma}^{2}$, respectively, we follow here the following rules, where $\sigma^{2}$ is the variance of the ``Gaussian''  noise contaminating the data:
\begin{equation}\label{sigma_choice}
\overline{\sigma}^{2}=\sigma^{2}\left (1+\frac{\sqrt{2}}{n_{w}} \right ), \qquad \underline{\sigma}^{2}=\sigma^{2}\left (1-\frac{\sqrt{2}}{n_{w}} \right ).
\end{equation}
 The formulae \eqref{sigma_choice} are based on the statistics of the extremes; see \cite[Section 4.2.1]{hintermuellerPartII}.

We now proceed by describing the algorithm for the numerical solution of \eqref{bilevelTGV_h}. In essence, we employ a discretized projected gradient method with Armijo line search. The discrete gradient of the reduced objective functional is computed with the help of the adjoint equation which is the discrete version of \eqref{TGV_as_2a}. We summarize this in Algorithm \ref{alg:bilevelTGV}.

 \begin{algorithm}[!h]
  \caption{\newline Discretized projected gradient method for the bilevel TGV problem \eqref{bilevelTGV_h} \label{alg:bilevelTGV}}
  \begin{algorithmic}
    \Statex {\textbf{Input}: $f$, $\underline{\alpha}_{0}$, $\overline{\alpha}_{0}$, $\underline{\alpha}_{1}$, $\overline{\alpha}_{1}$, $\overline{\sigma}$, $\underline{\sigma}$, $\lambda$, $\beta$, $\gamma$, $\epsilon_{0}$, $\epsilon_{1}$, $\delta$, $n_{w}$ $\tau_{0}^{0}$, $\tau_{1}^{0}$, $0<c<1$, $0<\theta_{-}<1\le \theta_{+}$}
    \Statex{\textbf{Initialize}: $\alpha_{0}^{0}\in(\mathcal{A}_{ad}^{0})_{h}$, $\alpha_{1}^{0}\in(\mathcal{A}_{ad}^{1})_{h}$  and set $k=0$.}
 \Repeat
     \State{Use Algorithm \ref{TGV_Newton} to compute the solution $p^{k}$ of the lower level problem 
     \[g_{\mathrm{d}}(p^{k},\alpha_{0}^{k}, \alpha_{1}^{k}):=\beta\Delta^{2}p^{k}+\gamma p^{k}+ \nabla^{2}B^{-1}\di^{2}p^{k} -\nabla^{2} B^{-1} T^{\ast}f+\frac{1}{\epsilon_{0}} Q_{\delta}(p^{k},\alpha_{0}^{k})-\frac{1}{\epsilon_{1}}\nabla P_{\delta}(\di p^{k},\alpha_{1}^{k})=0\]}
    \State{Solve the adjoint equation for $q^{k}$
    \begin{align*}
    \beta \Delta^{2} q^{k}+\gamma q^{k}+\nabla^{2} B^{-1}\di^{2} q^{k}
    &+\frac{1}{\epsilon_{0}} \left (G_{\delta}''(p^{k}-\mathbf{1}\alpha_{0}^{k})+G_{\delta}''(-p^{k}-\mathbf{1}\alpha_{0}^{k}) \right)q^{k}\\
    &-\frac{1}{\epsilon_{1}}  \nabla \left ( G_{\delta}''(\di p^{k}-\mathbf{1}\alpha_{1}^{k})+G_{\delta}''(-\di p^{k}-\mathbf{1}\alpha_{1}^{k})\right )\di q^{k}\\
    &=-2\nabla B^{-1}T^{\ast}\di^{2}p^{k}\left (w\ast\left((R(\di^{2} p^{k})-\overline{\sigma}^{2})^{+}-(\underline{\sigma}^{2}-R(\di^{2} p^{k}))^{+}\right) \right )
    \end{align*}
    } 
  \State{Compute the derivative of the reduced objective with respect to $\alpha_{0}$ and $\alpha_{1}$
  \begin{align*}
  \hat{J}_{\mathrm{d},\alpha_{0}}'(\alpha_{0}^{k},\alpha_{1}^{k})&=\frac{1}{\epsilon_{0}}[\mathbbm{1}\;\; \mathbbm{1} \;\; \mathbbm{1}] \left (-G_{\delta}''(p^{k}-\mathbf{1}\alpha_{0}^{k})+G_{\delta}''(-p^{k}-\mathbf{1}\alpha_{0}^{k})\right )q^{k},\\
  \hat{J}_{\mathrm{d},\alpha_{1}}'(\alpha_{0}^{k},\alpha_{1}^{k})&=-\frac{1}{\epsilon_{1}}[Id \;\; Id]\nabla \left (-G_{\delta}''(\di p^{k}-\mathbf{1}\alpha_{1}^{k})+G_{\delta}''(-\di p^{k}-\mathbf{1}\alpha_{1}^{k}) \right )q^{k}+ \lambda(I-\Delta_{N})\alpha_{1}^{k}.
  \end{align*} 
  } 
  \State{Compute the reduced gradients
  \begin{align*}
  \nabla_{\alpha_{0}} \hat{J}_{\mathrm{d}}(\alpha_{0}^{k},\alpha_{1}^{k})
  &=\hat{J}_{\mathrm{d},\alpha_{0}}'(\alpha_{0}^{k},\alpha_{1}^{k}),\\
  \nabla_{\alpha_{1}} \hat{J}_{\mathrm{d}}(\alpha_{0}^{k},\alpha_{1}^{k})
  &=(I-\Delta_{N})^{-1}\hat{J}_{\mathrm{d}}'(\alpha_{0}^{k},\alpha_{1}^{k})
  \end{align*}
  }

\State{Compute the trial points 
\begin{align*}
\alpha_{i}^{k+1}&=P_{(\mathcal{A}_{ad}^{i})_{h}}\big(\alpha_{i}^{k}-\tau_{i}^{k}  \nabla_{\alpha_{i}} \hat{J}_{\mathrm{d}}(\alpha_{0}^{k},\alpha_{1}^{k})\big), \quad i=0,1
%\alpha_{1}^{k+1}&=P_{(\mathcal{A}_{ad}^{1})_{h}}\big(\alpha_{1}^{k}-\tau_{1}^{k}  \nabla_{\alpha_{1}} \hat{J}_{\mathrm{d}}(\alpha_{0}^{k},\alpha_{1}^{k})\big)
\end{align*}
}
\While{
\begin{align*}
\hat{J}_{\mathrm{d}}(\alpha_{0}^{k+1},\alpha_{1}^{k+1})
&> \hat{J}_{\mathrm{d}}(\alpha_{0}^{k},\alpha_{1}^{k})\\
&+c\left (\hat{J}_{\mathrm{d},\alpha_{0}}'(\alpha_{0}^{k},\alpha_{1}^{k})^{\top}(\alpha_{0}^{k+1}-\alpha_{0}^{k})
+\hat{J}_{\mathrm{d},\alpha_{1}}'(\alpha_{0}^{k},\alpha_{1}^{k})^{\top}(\alpha_{1}^{k+1}-\alpha_{1}^{k})\right)
\end{align*}
}{ (Armijo line search)}
\State{Set $\tau_{0}^{k}:=\theta_{-}\tau_{0}^{k}$, $\tau_{1}^{k}:=\theta_{-}\tau_{1}^{k}$ and re-compute 
\begin{align*}
\alpha_{i}^{k+1}&=P_{(\mathcal{A}_{ad}^{i})_{h}}\big(\alpha_{i}^{k}-\tau_{i}^{k}  \nabla_{\alpha_{i}} \hat{J}_{\mathrm{d}}(\alpha_{0}^{k},\alpha_{1}^{k})\big),\quad i=0,1
%\alpha_{1}^{k+1}&=P_{(\mathcal{A}_{ad}^{1})_{h}}\big(\alpha_{1}^{k}-\tau_{1}^{k}  \nabla_{\alpha_{1}} \hat{J}_{\mathrm{d}}(\alpha_{0}^{k},\alpha_{1}^{k})\big)
\end{align*}
}
  \EndWhile
  \State{Update $\tau_{0}^{k+1}=\theta_{+}\tau_{0}^{k}$, $\tau_{1}^{k+1}=\theta_{+}\tau_{1}^{k}$ and $k:=k+1$}
 \Until{some stopping condition is satisfied}
  \end{algorithmic}
\end{algorithm}

For the sake of notation, here $\bf{1}$ denotes a matrix either of the form $[Id;Id]$ or $[Id;Id;Id]$ of size $nm\times 2nm$ or $nm\times 3nm$, respectively, depending on whether it is applied on $\alpha_{1}$ or $\alpha_{0}$. On the other hand, $\mathbbm{1}$ denotes a matrix of size $1\times nm$ with all entries equal to one. The projection $P_{(\mathcal{A}_{ad}^{1})_{h}}$ is computed as described in \cite[Algorithm 4]{hintermuellerPartII}, that is via the semismooth Newton method developed in \cite{HiKu_pf}. We only mention   that  the original discretized $H^{1}$--projection problem $P_{(\mathcal{A}_{ad})_{h}}(\tilde{\alpha})$ given by
\begin{equation}\label{H1_proj_h}
\left\{
\begin{aligned}
&\min \;\frac{1}{2}\|\alpha-\tilde{\alpha}\|_{H^{1}(\om_{h})}^{2}:= \frac{h}{2}(\alpha-\tilde{\alpha})^{\top}(I-\Delta_{N})(\alpha-\tilde{\alpha}),\\
&\text{over} \quad \alpha\in   (\mathcal{A}_{ad})_{h}=\{\alpha\in U_{h}:\; \underline{\alpha}\le \alpha_{i,j} \le \overline{\alpha}\},
\end{aligned}
\right.
\end{equation}
is approximated by the following penalty version: 
\begin{equation}\label{H1_proj_h}
\min_{\alpha\in U_{h}} \;\frac{1}{2} \|\alpha-\tilde{\alpha}\|_{H^{1}(\om_{h})}^{2}
+ \frac{1}{\epsilon_{\alpha}}\left (\frac{1}{2}\|(\alpha-\overline{\alpha})^{+}\|_{\ell^{2}(\om_{h})}^{2}+ \frac{1}{2}\|(\underline{\alpha}-\alpha)^{+}\|_{\ell^{2}(\om_{h})}^{2}\right ),
\end{equation}
with some small $\epsilon_{\alpha}>0$. For the projection regarding $\alpha_{0}$, we simply set $P_{(\mathcal{A}_{ad}^{1})_{h}}(\alpha_{0})=\max(\min(\alpha_{0},\overline{\alpha}_{0}),\underline{\alpha}_{0})$.

Furthermore, a path following scheme is employed for solving $g_{\mathrm{d}}(p,\alpha_{0},\alpha_{1})=0$. This done by using a decaying sequence $\epsilon_{0}=\epsilon_{0}^{\ell}$, $\epsilon_{1}=\epsilon_{1}^{\ell}$ up to a tolerance
 \[g_{\mathrm{d}}(p^{\ell+1},\alpha_{0},\alpha_{1})\le \mathrm{tol}_{(\ell)},\]
 and then setting $\epsilon_{0}^{\ell+1}:=\max(\theta_{\epsilon}\epsilon_{0}^{\ell},\epsilon_{0})$, 
 $\epsilon_{1}^{\ell+1}:=\max(\theta_{\epsilon}\epsilon_{1}^{\ell},\epsilon_{1})$ for some $0<\theta_{\epsilon}<1$, until a desired level of penalization is reached.

\subsection{The numerical algorithm for \eqref{bilevelTGV_pd}}

We now turn our attention to the  discretized bilevel problem  \eqref{bilevelTGV_pd} which, again for simplicity, is here formulated for the denoising case, only. Since for that problem we also report on numerical experiments for spatially varying $\alpha_{0}$ we formulate the problem for this general case:
  \begin{equation}\tag{$\mathbb{P}_{\tgv}^{\,h}$-p.d.}\label{bilevelTGV_h_pd}\left \{
 \begin{aligned}
 &\text{minimize }\; \frac{1}{2}\|(R(u)-\overline{\sigma}^{2})^{+}\|_{\ell^{2}(\om_{h})}^{2}
 +\frac{1}{2}\|(\underline{\sigma}^{2}-R(u))^{+}\|_{\ell^{2}(\om_{h})}^{2}
 +\frac{\lambda_{0}}{2}\|\alpha_{0}\|_{H^{1}(\om_{h})}^{2}
  +\frac{\lambda_{0}}{2}\|\alpha_{1}\|_{H^{1}(\om_{h})}^{2},\\
 &\qquad \text{over } (u,\alpha_{0},\alpha_{1})\in U_{h}\times (\mathcal{A}_{ad}^{0})_{h}\times (\mathcal{A}_{ad}^{1})_{h},\\
 %&\text{subject to the discretized version of }  \eqref{lbl:opt1}-\eqref{lbl:opt4}
 	& \begin{aligned}
	\text{ subject to }\qquad \qquad\qquad
	u-\mu \Delta u - \di q -f&=0, \\
\alpha w -\alpha \Delta w -q-\di p&=0, \\
\mathrm{max}_{\delta}(|\nabla u -w|,\gamma_{1})q - \alpha_{1} (\nabla u -w)&=0, \\
\mathrm{max}_{\delta} (|Ew|,\gamma_{0})p-\alpha_{0}E w&=0.
	\end{aligned}
 \end{aligned}\right.
 \end{equation}
Here the set $(\mathcal{A}_{ad}^{0})_{h}$ is defined similarly to $(\mathcal{A}_{ad}^{1})_{h}$ before.
% \begin{align*}
%  (\mathcal{A}_{ad}^{0})_{h}&=
% \{\alpha_{0}\in U_{h}:\; \underline{\alpha}_{0}\le (\alpha_{0})_{i,j} \le \overline{\alpha}_{0},\; \text{for all }(i,j)\in \om_{h}\}\\
% (\mathcal{A}_{ad}^{1})_{h}&=
% \{\alpha_{1}\in U_{h}:\; \underline{\alpha}_{1}\le (\alpha_{1})_{i,j} \le \overline{\alpha}_{1},\; \text{for all }(i,j)\in \om_{h}\}\label{A_ad_h_tgv}
% \end{align*}
 The constraints in \eqref{bilevelTGV_h_pd} are the discretized versions of \eqref{lbl:opt1}-\eqref{lbl:opt4}, still  denoted by $g_{\mathrm{pd}}(\mathrm{x})=0$. The upper level objective is still denoted by $J_{\mathrm{pd}}$. 
The corresponding discretized adjoint equation  
\[D_{\mathrm{x}}g_{\mathrm{pd}}(\mathrm{x}^{\ast})^{\top}=-D_{\mathrm{x}}J_{\mathrm{pd}}(\mathrm{x}),\]
where $\mathrm{x}^{\ast}:=(u^{\ast}, w^{\ast}, q^{\ast},p^{\ast})$ is the adjoint variable,  reads
\begin{equation}\label{adjoint_pd}
\begin{bmatrix}
   A^{\top}    & C^{\top} \\
    B^{\top} & D^{\top}
\end{bmatrix}
\left (
\begin{array}{c}
u^{\ast}\\
w^{\ast}\\
q^{\ast}\\
p^{\ast}
\end{array}
\right )=
\left (
\begin{array}{c}
-2(u-f)\left(w\ast\left ((R(u)-\overline{\sigma}^{2})^{+}-(\underline{\sigma}^{2}-R(u))^{+} \right)\right)\\
0\\
0\\
0
\end{array}
\right )
:=
\left(
\begin{array}{c}
b_{1}^{\ast}\\
b_{2}^{\ast}
\end{array}
\right ),
\end{equation}
 where the matrices above were defined in \eqref{ABD} and \eqref{CCC}.
The equation can be solved again for $\mathrm{x}_{1}^{\ast}:=(u^{\ast},w^{\ast})$ first and then subsequently for  $\mathrm{x}_{2}^{\ast}:=(q^{\ast},p^{\ast})$ as follows
\begin{align*}
\left( A^{\top} - C^{\top} (D^{\top})^{-1}B^{\top}  \right)x_{1}^{\ast}&=b_{1}^{\ast},\\
x_{2}^{\ast}&= (D^{\top})^{-1}(b_{2}^{\ast}-B^{\top}x_{1}^{\ast}).
\end{align*}
The derivatives of the reduced objective with respect to $\alpha_{0}$ and $\alpha_{1}$, respectively, are
\begin{align}
\hat{J}_{\mathrm{pd},\alpha_{0}}'(\alpha_{0},\alpha_{1})
&=(D_{\alpha_{0}}g_{\mathrm{pd}})^{\top} x^{\ast} + D_{\alpha_{0}}J_{\mathrm{pd}}(\alpha_{0},\alpha_{1})\\
&=
\begin{bmatrix}
   Id   & Id & 2 Id
\end{bmatrix}
\left(
\begin{array}{c c c c}
0 &    &  &\\
   & 0 &  &\\
   &    & 0 & \\
   &	 & & -diag(Ew) 
\end{array}
\right )
\left (
\begin{array}{c}
u^{\ast}\\
w^{\ast}\\
q^{\ast}\\
p^{\ast}
\end{array}
\right ) + \lambda_{0}(Id-\Delta_{N})\alpha_{0}
\label{derJa0}\\
%&=
%\begin{bmatrix}
%   Id   & Id & 2 Id
%\end{bmatrix}
%\left(
%\begin{array}{c c c c}
%0 &    &  &\\
%   & 0 &  &\\
%   &    & & \\
%   &	 & & -diag(Ew) 
%\end{array}
%\right )
%\left (
%\begin{array}{c}
%u^{\ast}\\
%w^{\ast}\\
%q^{\ast}\\
%p^{\ast}
%\end{array}
%\right ) + \lambda_{0}(Id-\Delta_{N})\alpha_{0}
%\nonumber\\
&=- 
\begin{bmatrix}
   Id   & Id & 2 Id
\end{bmatrix}
diag(Ew) p^{\ast}
+\lambda_{0}(Id-\Delta_{N})\alpha_{0},
\nonumber
\end{align}

\begin{align}
\hat{J}_{\mathrm{pd},\alpha_{1}}'(\alpha_{0},\alpha_{1})
&=(D_{\alpha_{1}}g_{\mathrm{pd}})^{\top} x^{\ast}+ D_{\alpha_{1}}J_{\mathrm{pd}}(\alpha_{0},\alpha_{1})\label{derJa1}\\
&=
\begin{bmatrix}
   Id   & Id 
\end{bmatrix}
\left(
\begin{array}{c c c c}
0 &    &  &\\
   & 0 &  &\\
   &    & -diag(Du-w)& \\
   &	 & &  0
\end{array}
\right )
\left (
\begin{array}{c}
u^{\ast}\\
w^{\ast}\\
q^{\ast}\\
p^{\ast}
\end{array}
\right )
+\lambda_{1}(Id-\Delta_{N})\alpha_{1},
\nonumber
\\
&=- 
\begin{bmatrix}
   Id   & Id 
\end{bmatrix}
diag(Du-w) q^{\ast}
+\lambda_{1}(Id-\Delta_{N})\alpha_{1},\nonumber
\end{align}
where $\mathrm{x}=(u,w,q,p)$ solves  $g_{\mathrm{pd}}(\mathrm{x})=0$ for $\alpha_{0}, \alpha_{1}$. The corresponding reduced gradients are  
\begin{align}
\nabla_{\alpha_{i}} \hat{J}_{\mathrm{pd}}(\alpha_{0},\alpha_{1})=(I-\Delta_{N})^{-1}\hat{J}_{\mathrm{pd},\alpha_{i}}'(\alpha_{0},\alpha_{1}),\quad i=0,1. \label{nablaJa0}
%\nabla_{\alpha_{1}} \hat{J}_{\mathrm{pd}}(\alpha_{0},\alpha_{1})=(Id-\Delta_{N})^{-1}\hat{J}_{\mathrm{pd},\alpha_{1}}'(\alpha_{0},\alpha_{1}).\label{nablaJa1}
\end{align}
We note that in the case of a scalar $\alpha_{0}$, we set $\lambda_{0}=0$. Then, $\hat{J}_{\mathrm{pd},\alpha_{0}}'(\alpha_{0},\alpha_{1})=-[\mathbbm{1}\;\; \mathbbm{1}\;\; 2 \mathbbm{1}] diag (Ew)p^{\ast}$,  and  $\nabla_{\alpha_{0}} \hat{J}_{\mathrm{pd}}(\alpha_{0},\alpha_{1})=\hat{J}_{\mathrm{pd},\alpha_{0}}'(\alpha_{0},\alpha_{1})$.

In summary, the projected gradient  algorithm for the solutions of \eqref{bilevelTGV_h_pd} is described in Algorithm \ref{alg:bilevelTGV_pd}. The projections $P_{(\mathcal{A}_{ad}^{0})_{h}}$ and $P_{(\mathcal{A}_{ad}^{1})_{h}}$ are computed  as before, using  \cite[Algorithm 4]{hintermuellerPartII}.

 \begin{algorithm}[!h]
  \caption{\newline Discretized projected gradient method for  the bilevel TGV problem \eqref{bilevelTGV_h_pd} \label{alg:bilevelTGV_pd}}
  \begin{algorithmic}
    \Statex {\textbf{Input}: $f$, $\underline{\alpha}_{0}$, $\overline{\alpha}_{0}$, $\underline{\alpha}_{1}$, $\overline{\alpha}_{1}$, $\overline{\sigma}$, $\underline{\sigma}$, $\lambda_{0}$, $\lambda_{1}$, $\alpha$, $\mu$, $\gamma_{0}$, $\gamma_{1}$, $\delta$, $n_{w}$ $\tau_{0}^{0}$, $\tau_{1}^{0}$, $0<c<1$, $0<\theta_{-}<1\le \theta_{+}$}
    \Statex{\textbf{Initialize}: $\alpha_{0}^{0}\in(\mathcal{A}_{ad}^{0})_{h}$, $\alpha_{1}^{0}\in(\mathcal{A}_{ad}^{1})_{h}$  and set $k=0$.}
 \Repeat
     \State{Use the Algorithm  \ref{TGV_Newton_pd} to compute the solution $\mathrm{x}^{k}=(u^{k},w^{k},q^{k},p^{k})$ of the lower level problem 
     \[g_{\mathrm{pd}}(u^{k},w^{k},q^{k},p^{k})=0\]}
    \State{Solve the adjoint equation \eqref{adjoint_pd} for $(u^{\ast},w^{\ast},q^{\ast}, p^{\ast})$
    } 
  \State{Compute the derivative of the reduced objective with respect to $\alpha_{0}$ and $\alpha_{1}$ as in \eqref{derJa0} and \eqref{derJa1}
  } 
  \State{Compute the reduced gradients
  \begin{align*}
  \nabla_{\alpha_{i}} \hat{J}_{\mathrm{pd}}(\alpha_{0}^{k},\alpha_{1}^{k})
  &=(I-\Delta_{N})^{-1}\hat{J}_{\mathrm{pd},\alpha_{i}}'(\alpha_{0}^{k},\alpha_{1}^{k}),\quad i=0,1
%  \nabla_{\alpha_{1}} \hat{J}_{\mathrm{pd}}(\alpha_{0}^{k},\alpha_{1}^{k})
%  &=(Id-\Delta_{N})^{-1}\hat{J}_{\mathrm{pd},\alpha_{1}}'(\alpha_{0}^{k},\alpha_{1}^{k})
  \end{align*}
  }
\State{Compute the trial points 
\begin{align*}
\alpha_{i}^{k+1}&=P_{(\mathcal{A}_{ad}^{i})_{h}}\big(\alpha_{i}^{k}-\tau_{i}^{k}  \nabla_{\alpha_{i}} \hat{J}_{\mathrm{pd}}(\alpha_{0}^{k},\alpha_{1}^{k})\big),\quad i=0,1
%\alpha_{1}^{k+1}&=P_{(\mathcal{A}_{ad}^{1})_{h}}\big(\alpha_{1}^{k}-\tau_{1}^{k}  \nabla_{\alpha_{1}} \hat{J}_{\mathrm{pd}}(\alpha_{0}^{k},\alpha_{1}^{k})\big)
\end{align*}
}
\While{
\begin{align*}
\hat{J}_{\mathrm{pd}}(\alpha_{0}^{k+1},\alpha_{1}^{k+1})
&> \hat{J}_{\mathrm{pd}}(\alpha_{0}^{k},\alpha_{1}^{k})\\
&+c\left (\hat{J}_{\mathrm{pd},\alpha_{0}}'(\alpha_{0}^{k},\alpha_{1}^{k})^{\top}(\alpha_{0}^{k+1}-\alpha_{0}^{k})
+\hat{J}_{\mathrm{pd},\alpha_{1}}'(\alpha_{0}^{k},\alpha_{1}^{k})^{\top}(\alpha_{1}^{k+1}-\alpha_{1}^{k})\right)
\end{align*}
}{ (Armijo line search)}
\State{Set $\tau_{0}^{k}:=\theta_{-}\tau_{0}^{k}$, $\tau_{1}^{k}:=\theta_{-}\tau_{1}^{k}$ and re-compute 
\begin{align*}
\alpha_{i}^{k+1}&=P_{(\mathcal{A}_{ad}^{i})_{h}}\big(\alpha_{i}^{k}-\tau_{i}^{k}  \nabla_{\alpha_{i}} \hat{J}_{\mathrm{pd}}(\alpha_{0}^{k},\alpha_{1}^{k})\big),\quad i=0,1
%\alpha_{1}^{k+1}&=P_{(\mathcal{A}_{ad}^{1})_{h}}\big(\alpha_{1}^{k}-\tau_{1}^{k}  \nabla_{\alpha_{1}} \hat{J}_{\mathrm{pd}}(\alpha_{0}^{k},\alpha_{1}^{k})\big)
\end{align*}
}
  \EndWhile
  \State{Update $\tau_{0}^{k+1}=\theta_{+}\tau_{0}^{k}$, $\tau_{1}^{k+1}=\theta_{+}\tau_{1}^{k}$ and $k:=k+1$}
 \Until{some stopping condition is satisfied}
  \end{algorithmic}
\end{algorithm}

\subsection{Numerical examples in denoising}

We now discuss some weighted TGV numerical examples, with regularization weights produced automatically by Algorithms \ref{alg:bilevelTGV}  and \ref{alg:bilevelTGV_pd}. We are particularly interested in the degree of improvement over the scalar TGV examples.
% where the scalar regularization weights are chosen ``manually'' to optimize the standard image quality measures PSNR and SSIM.
We are also interested in whether the statistics-based upper level objective enforces an automatic choice of regularization parameters that ultimately leads to a reduction of the staircasing effect. Our TGV results are also compared with the bilevel weighted TV method of \cite{hintermuellerPartI, hintermuellerPartII}.
 The associated test images are depicted in Figure \ref{fig:testimages} with resolution $n=m=256$. The first one is the well-known ``Cameraman'' image which essentially consists of a combination of piecewise constant parts and texture. The next two images, ``Parrot'' and ``Turtle'' contain large piecewise affine type  areas, thus they are more suitable for the TGV prior. The final image ``hatchling'' is characterized by highly oscillatory patterns of various kinds, depicting sand in various degrees of focus. 
\begin{figure}[h!]
	\centering
	\begin{minipage}[t]{0.23\textwidth}\centering
	\includegraphics[width=0.95\textwidth]{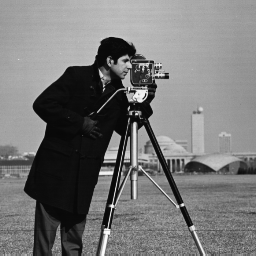}\\[1pt]
	Cameraman
	\end{minipage}
	\begin{minipage}[t]{0.23\textwidth}\centering
	\includegraphics[width=0.95\textwidth]{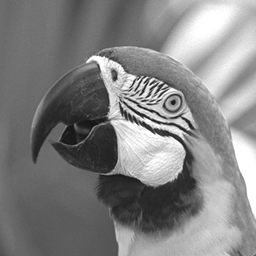}\\[1pt] Parrot
	\end{minipage}
	\begin{minipage}[t]{0.23\textwidth}\centering
	\includegraphics[width=0.95\textwidth]{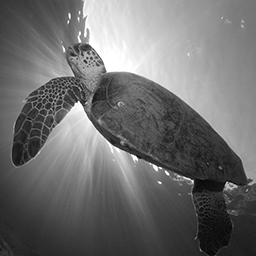}\\[1pt Turtle]
	
	\end{minipage}
	\begin{minipage}[t]{0.23\textwidth}\centering
	\includegraphics[width=0.95\textwidth]{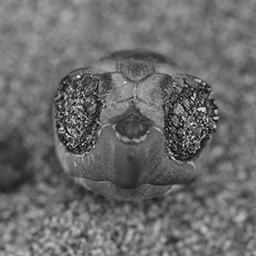}\\[2pt] Hatchling
	\end{minipage}
	\caption{Test images, resolution $256\times 256$.}
	\label{fig:testimages}
\end{figure}

\emph{Parameter values for \eqref{bilevelTGV_h}}: For the lower level dual TGV problem we used $\beta=10^{-3}$, $\gamma=0$, $\delta=10^{-6}$, $\epsilon_{0}=10^{-12}$, $\epsilon_{1}=10^{-12}$. Initially the lower problem is solved for $\epsilon_{0}^{0}=10^{3}$, $\epsilon_{1}^{0}=10^{3}$ and each of these successively decreased by the same factor $\theta_{\epsilon}=0.05$ down to final values $\epsilon_{0}=\epsilon_{1}=10^{-12}$.

MATLAB's backslash was used for the solution of the linear systems. We set $\underline{\alpha}_{0}=10^{-7}$, $\overline{\alpha}_{0}=10^{-2}$, and $\lambda=10^{-11}$, while for the $H^{1}$--projection we used  $\epsilon_{\alpha}=10^{-10}$, and $\underline{\alpha}_{1}=10^{-7}$, $\overline{\alpha}_{1}=10^{-2}$. A normalized  $n_{w}\times n_{w}$ filter for $w$ (i.e., with entries $1/n_{w}^{2}$), with $n_{w}=7$ was used. The local variance barriers $\underline{\sigma}^{2}$ and $\overline{\sigma}^{2}$ were set according to \eqref{sigma_choice}.  For our noisy images we have $\sigma^{2}=10^{-2}$, and thus the corresponding values for $(\underline{\sigma},\overline{\sigma})$ are $(0.00798, 0.01202).$
For the Armijo line search the parameters had the values $\tau_{0}^{0}=1$, $\tau_{1}^{0}=10^{-12}$, while $c=10^{-8}$, $\theta_{-}=0.25$, $\theta_{+}=2$.

\emph{Parameter values for \eqref{bilevelTGV_h_pd}}: For the lower level primal-dual TGV problem we used $\mu=0.1$, $\alpha=1$, $\delta=10^{-5}$, $\gamma_{0}=\gamma_{1}=10^{-3}$. 
We note that here we chose a mesh size $h=1$.
%{\bf\color{red}We note that here we chose a mesh size $h=1$, as this resulted in a better scaling of the Newton system -- contradicts our discussion!}.
 For the $H^{1}$--projection, we set $\epsilon_{\alpha}=10^{-6}$, and we also weighted the discrete Laplacian $\Delta_{N}$ with $6\times 10^{4}$. For the lower and upper bounds of $\alpha_{0}$ and $\alpha_{1}$ we set here $\underline{\alpha}_{0}=10^{-2}$, $\overline{\alpha}_{0}=10$ and $\underline{\alpha}_{1}=10^{-4}$, $\overline{\alpha}_{1}=10$. We also set $\lambda_{1}=10^{-11}$ and when we spatially varied $\alpha_{0}$ we also set $\lambda_{0}=10^{-11}$.   We used the same  filter $w$ and  local variance barriers as before. For the Armijo line search the parameters were $\tau_{0}^{0}=0.05$, $\tau_{1}^{0}=100$, $c=10^{-9}$, $\theta_{-}=0.25$, $\theta_{+}=2$. We solved each lower level problem until the residual of each of the optimality conditions \eqref{lbl:opt1}--\eqref{lbl:opt4} had Euclidean norm less than $10^{-4}$. Again,  MATLAB's backslash was used for the solution of the linear systems.  

We note that the initialization of the algorithms needs some attention. As it was done in \cite{hintermuellerPartII} for the TV case, $\alpha_{0}^{0}$ and $\alpha_{0}^{1}$ must be large enough in order to produce cartoon-like images, providing the local variance estimator with useful information. However, if $\alpha_{0}$ is initially too large then there is a danger of falling into the regime, in which the TGV functional and hence the solution map of (at least the non-regularized) lower level problem does not depend on $\alpha_{0}$.  In that case the derivative of the reduced functional with respect to $\alpha_{0}$ will be close to zero, thus making no or little progress with respect to its optimal choice.  Indeed this was confirmed after some numerical experimentation. Note that  an analogous phenomenon can occur also in the case where $\alpha_{0}$ is much smaller than $\alpha_{1}$. In that case it is the effect of $\alpha_{1}$ which vanishes. This has been shown theoretically in \cite[Proposition 2]{tgv_asymptotic} for dimension one, but numerical experiments indicate that this phenomenon persists also in higher dimensions. In our examples we used and $\alpha_{1}^{0}=9\times 10^{-4}$ and $\alpha_{0}^{0}= 3.125\times 10^{-6}$ for \eqref{bilevelTGV_h} and $\alpha_{1}^{0}=0.25$ and $\alpha_{0}^{0}= 0.2$ for \eqref{bilevelTGV_h_pd}. Regarding the termination of the projected gradient algorithm, we used  a fixed number of iterations, $n=30$ for \eqref{bilevelTGV_h} and $n=40$ for \eqref{bilevelTGV_h_pd}. Neither the upper level objective nor the argument changed significantly after running the algorithm for more iterations; see for instance Figure \ref{fig:Jvalues}.
The same holds true for the corresponding PSNR and SSIM values. We also note that a termination criterion as in \cite{hintermuellerPartII} based on the proximity measures $\|P_{(\mathcal{A}_{ad}^{i})_{h}}\left(\alpha_{i}^{k}-\nabla _{\alpha_{i}} \hat{J} (\alpha_{0}^{k},\alpha_{1}^{k})\right)-\alpha_{i}^{k}\|_{H^{1}(\om_{h})}$, $i=0,1$, is also possible here.

We note that due to the line search, the number of times that the lower level problem has to be solved is more than the number of projected gradient iterations. For instance for the four examples of \eqref{bilevelTGV_h_pd} of Figure  \ref{fig:weightedTGV_01} the lower level problem had to be solved 57, 57, 57, and 59 times respectively (40 projected gradient iterations). Typically  8-12 Newton iterations were needed per each lower level problem. 

 \begin{figure}
	 \begin{minipage}[t]{0.48\textwidth}\centering
		\resizebox{0.95\textwidth}{!}{
		% This file was created by matlab2tikz.
%
%The latest updates can be retrieved from
%  http://www.mathworks.com/matlabcentral/fileexchange/22022-matlab2tikz-matlab2tikz
%where you can also make suggestions and rate matlab2tikz.
%
\begin{tikzpicture}

\begin{axis}[%
width=6.028in,
height=4.754in,
at={(1.011in,0.642in)},
scale only axis,
xmin=0,
xmax=30,
ymode=log,
ymin=1e-07,
ymax=1e-05,
yminorticks=true,
ylabel style={font=\color{white!15!black}},
ylabel={\LARGE{$J_{\mathrm{d}}$ values}},
xlabel style={font=\color{white!15!black}},
xlabel={\LARGE{Projected gradient iterations}},
axis background/.style={fill=white},
legend style={legend cell align=left, align=left, draw=white!15!black}
]
\addplot [color=red, mark=o, mark options={solid, red}]
  table[row sep=crcr]{%
1	7.92025418523312e-06\\
2	1.51344947460644e-06\\
3	1.13172138991821e-06\\
4	1.0526806219983e-06\\
5	9.84499478093348e-07\\
6	9.6583625633586e-07\\
7	9.42163873274314e-07\\
8	9.26162347805311e-07\\
9	9.1148770532944e-07\\
10	8.99312396916726e-07\\
11	8.77435070870293e-07\\
12	8.40028382444426e-07\\
13	8.1386271769096e-07\\
14	7.85787403319441e-07\\
15	7.73212118345409e-07\\
16	7.68025160745699e-07\\
17	7.58592449251271e-07\\
18	7.50807172242724e-07\\
19	7.39288095553082e-07\\
20	7.35205323466642e-07\\
21	7.28318166231559e-07\\
22	7.24459320783163e-07\\
23	7.18444014204643e-07\\
24	7.14700256026075e-07\\
25	7.0988946417872e-07\\
26	7.05863451310332e-07\\
27	7.02250893772741e-07\\
28	6.97766013181739e-07\\
29	6.94416847751218e-07\\
30	6.90104631775679e-07\\
};
\addlegendentry{\Large{Cameraman}}

\addplot [color=green, mark=o, mark options={solid, green}]
  table[row sep=crcr]{%
1	6.65345935634544e-06\\
2	6.77908093043133e-07\\
3	5.72081660391083e-07\\
4	5.42202463578948e-07\\
5	4.96041167915544e-07\\
6	4.45896783618417e-07\\
7	4.34118425922962e-07\\
8	4.04498206576398e-07\\
9	3.90398998679469e-07\\
10	3.75267803370281e-07\\
11	3.64436149603147e-07\\
12	3.53669963915777e-07\\
13	3.46827267613679e-07\\
14	3.35299939007164e-07\\
15	3.19393393361473e-07\\
16	3.18185806937104e-07\\
17	3.11000300558638e-07\\
18	3.07985239065732e-07\\
19	3.05543215059237e-07\\
20	3.01332317788882e-07\\
21	2.96755215938297e-07\\
22	2.93575346178062e-07\\
23	2.91978722966538e-07\\
24	2.89432823027451e-07\\
25	2.87246948577717e-07\\
26	2.84470583595698e-07\\
27	2.83250291154434e-07\\
28	2.81452100039515e-07\\
29	2.78867774160883e-07\\
30	2.78565331612873e-07\\
};
\addlegendentry{\Large{Parrot}}

\addplot [color=blue, mark=o, mark options={solid, blue}]
  table[row sep=crcr]{%
1	6.9394483327094e-06\\
2	4.90193979800763e-06\\
3	1.03740490428506e-06\\
4	4.45245538798915e-07\\
5	4.3606100945152e-07\\
6	4.30482486127051e-07\\
7	4.22364854700092e-07\\
8	4.10234298169838e-07\\
9	4.07224660251881e-07\\
10	4.00222321826385e-07\\
11	3.95440261109315e-07\\
12	3.90338721172028e-07\\
13	3.85856509177952e-07\\
14	3.83427571822911e-07\\
15	3.79785858335904e-07\\
16	3.73361600233068e-07\\
17	3.7048146321775e-07\\
18	3.69531626991805e-07\\
19	3.69060601014483e-07\\
20	3.60705158041514e-07\\
21	3.58377122497501e-07\\
22	3.55059663407019e-07\\
23	3.53365307196353e-07\\
24	3.5077426134247e-07\\
25	3.48843885905963e-07\\
26	3.45146155466283e-07\\
27	3.38402713147035e-07\\
28	3.31387638361471e-07\\
29	3.29537946107342e-07\\
30	3.25247979590751e-07\\
};
\addlegendentry{\Large{Turtle}}

\addplot [color=black, mark=o, mark options={solid, black}]
  table[row sep=crcr]{%
1	7.17100556552085e-06\\
2	1.23854914228852e-06\\
3	1.08241336018669e-06\\
4	1.0592810865747e-06\\
5	9.54481539708672e-07\\
6	9.15418993573692e-07\\
7	8.71629875209475e-07\\
8	8.33397444260807e-07\\
9	7.84162061300719e-07\\
10	7.58225443877901e-07\\
11	7.27359839347395e-07\\
12	7.04974381338192e-07\\
13	6.65318737247608e-07\\
14	6.14752638439505e-07\\
15	5.95673100078165e-07\\
16	5.83982136611549e-07\\
17	5.63217444620514e-07\\
18	5.45603864233703e-07\\
19	5.22635851066064e-07\\
20	5.1444200332927e-07\\
21	5.02381601316868e-07\\
22	4.94608216107242e-07\\
23	4.8493086449051e-07\\
24	4.77482766853051e-07\\
25	4.68851056113842e-07\\
26	4.6239147909933e-07\\
27	4.54476941454008e-07\\
28	4.49251857991626e-07\\
29	4.41836691439026e-07\\
30	4.37966836607012e-07\\
};
\addlegendentry{\Large{Hatchling}}

\end{axis}
\end{tikzpicture}%}
	\end{minipage}	
	\begin{minipage}[t]{0.48\textwidth}\centering	
		\resizebox{0.95\textwidth}{!}{
		% This file was created by matlab2tikz.
%
%The latest updates can be retrieved from
%  http://www.mathworks.com/matlabcentral/fileexchange/22022-matlab2tikz-matlab2tikz
%where you can also make suggestions and rate matlab2tikz.
%
\begin{tikzpicture}

\begin{axis}[%
width=6.028in,
height=4.754in,
at={(1.011in,0.642in)},
scale only axis,
xmin=0,
xmax=40,
ymode=log,
ymin=0.003,
ymax=4,
yminorticks=true,
ylabel style={font=\color{white!15!black}},
ylabel={\LARGE{$J_{\mathrm{pd}}$ values}},
xlabel style={font=\color{white!15!black}},
xlabel={\LARGE{Projected gradient iterations}},
axis background/.style={fill=white},
legend style={legend cell align=left, align=left, draw=white!15!black}
]
\addplot [color=red, mark=o, mark options={solid, red}]
  table[row sep=crcr]{%
1	1.84841216104632\\
2	0.696435829956872\\
3	0.0340406133234087\\
4	0.0285943845410081\\
5	0.0199530855729707\\
6	0.0198423813114976\\
7	0.0187687206607354\\
8	0.0184577289874712\\
9	0.0180760585029656\\
10	0.0179067446108614\\
11	0.0175366624917547\\
12	0.017308230691463\\
13	0.0168974632134879\\
14	0.0167112352719757\\
15	0.0165788811610914\\
16	0.0164113817471497\\
17	0.0161422884114486\\
18	0.0160426662669468\\
19	0.0159990877656258\\
20	0.015958926049297\\
21	0.0157426434467411\\
22	0.0156745797881065\\
23	0.0155544348280554\\
24	0.0154931220637163\\
25	0.0154820957654815\\
26	0.0154386140920273\\
27	0.0152758080763588\\
28	0.0152207294514238\\
29	0.015115620973279\\
30	0.0150228254669821\\
31	0.0148889529032339\\
32	0.0148430080046342\\
33	0.0147539588372936\\
34	0.0146046957682597\\
35	0.0145641019246968\\
36	0.0145247344386612\\
37	0.0144477821737675\\
38	0.0143005866053737\\
39	0.0140319961143433\\
40	0.0139457183792569\\
};
\addlegendentry{\Large{Cameraman}}

\addplot [color=green, mark=o, mark options={solid, green}]
  table[row sep=crcr]{%
1	1.24149826549551\\
2	0.515876075165367\\
3	0.120865562415\\
4	0.0186532526051514\\
5	0.0152011100800669\\
6	0.0143472570002263\\
7	0.0141466171927817\\
8	0.0138829501264856\\
9	0.0137594474289833\\
10	0.0135174379914446\\
11	0.0134438160562508\\
12	0.0133386288888268\\
13	0.0132735199325537\\
14	0.0132140494342015\\
15	0.0131393411259071\\
16	0.0130968408848996\\
17	0.0130552943835091\\
18	0.0129789808757056\\
19	0.0128723054280741\\
20	0.0128277231967711\\
21	0.0127988005852667\\
22	0.0127459655113215\\
23	0.0126956270304976\\
24	0.0126356807979988\\
25	0.0126120345868281\\
26	0.0125713028507724\\
27	0.0125703036704617\\
28	0.0124837754330539\\
29	0.0124638189110912\\
30	0.0124383489024112\\
31	0.0124152541168594\\
32	0.0123999973573687\\
33	0.012371824809377\\
34	0.0123547609084117\\
35	0.0123393527673036\\
36	0.0123099758071778\\
37	0.0122631644515754\\
38	0.0122462070722556\\
39	0.0122325696308575\\
40	0.0122071683885609\\
};
\addlegendentry{\Large{Parrot}}

\addplot [color=blue, mark=o, mark options={solid, blue}]
  table[row sep=crcr]{%
1	0.5658567810425\\
2	0.47474916504711\\
3	0.233631152945399\\
4	0.216762926257541\\
5	0.171904758994717\\
6	0.0427071478260397\\
7	0.0388375858894744\\
8	0.0182592414074006\\
9	0.0152366617152323\\
10	0.0149520973706874\\
11	0.0146510650208991\\
12	0.014446445487838\\
13	0.0142093072736291\\
14	0.0140728347728681\\
15	0.0138766238305132\\
16	0.0136745927095213\\
17	0.0135113386456451\\
18	0.01341958475044\\
19	0.0132868581612123\\
20	0.013071828294027\\
21	0.0129865674326082\\
22	0.0128705507872063\\
23	0.0127647617846798\\
24	0.0126052631747444\\
25	0.0125496161943959\\
26	0.0124109193898952\\
27	0.0123173302897316\\
28	0.012184637407874\\
29	0.0121438434613726\\
30	0.0119903380312925\\
31	0.0119019128182433\\
32	0.0117939522186329\\
33	0.0117760641611864\\
34	0.0115896161773863\\
35	0.0115112372108981\\
36	0.0114957372396083\\
37	0.0113463105044151\\
38	0.0112869609849009\\
39	0.0112202534305624\\
40	0.0111317410230551\\
};
\addlegendentry{\Large{Turtle}}

\addplot [color=black, mark=o, mark options={solid, black}]
  table[row sep=crcr]{%
1	0.859921023725285\\
2	0.516505120297241\\
3	0.516055891488347\\
4	0.514766777012052\\
5	0.511024265094735\\
6	0.496666007325591\\
7	0.263550551367947\\
8	0.0336260323321901\\
9	0.0237447302186058\\
10	0.0197148891266367\\
11	0.0176440163763841\\
12	0.0159234472038393\\
13	0.0152939048641668\\
14	0.0148888174687519\\
15	0.0143666607118204\\
16	0.0141959692120003\\
17	0.0140642578254204\\
18	0.0139213997672532\\
19	0.0137432675582621\\
20	0.0136491001384465\\
21	0.0134823800032121\\
22	0.0133337525320166\\
23	0.0131581952014422\\
24	0.0130969246630868\\
25	0.0129874436976769\\
26	0.0129624594190046\\
27	0.0127570704882738\\
28	0.012707067974945\\
29	0.0126283323630932\\
30	0.0125801330688227\\
31	0.0125155810660578\\
32	0.0124653437506712\\
33	0.0124158038581694\\
34	0.0123605052046173\\
35	0.0123289914943226\\
36	0.0122638803754335\\
37	0.0122422044202015\\
38	0.0121723964867417\\
39	0.0121572163433941\\
40	0.0120856979089995\\
};
\addlegendentry{\Large{Hatchling}}

\end{axis}
\end{tikzpicture}%}
	\end{minipage}
\caption{Upper level objective values vs projected gradient iterations for the  problems \eqref{bilevelTGV_h} (left) and \eqref{bilevelTGV_h_pd} (right) of Figure \ref{fig:weightedTGV_01}. Note the different scaling which is due to the different values for the mesh size $h$ used for the two methods
%, i.e., $h=1/\sqrt{nm}$ for \eqref{bilevelTGV_h} and $h=1$ for  \eqref{bilevelTGV_h_pd}
}
\label{fig:Jvalues}		
\end{figure}
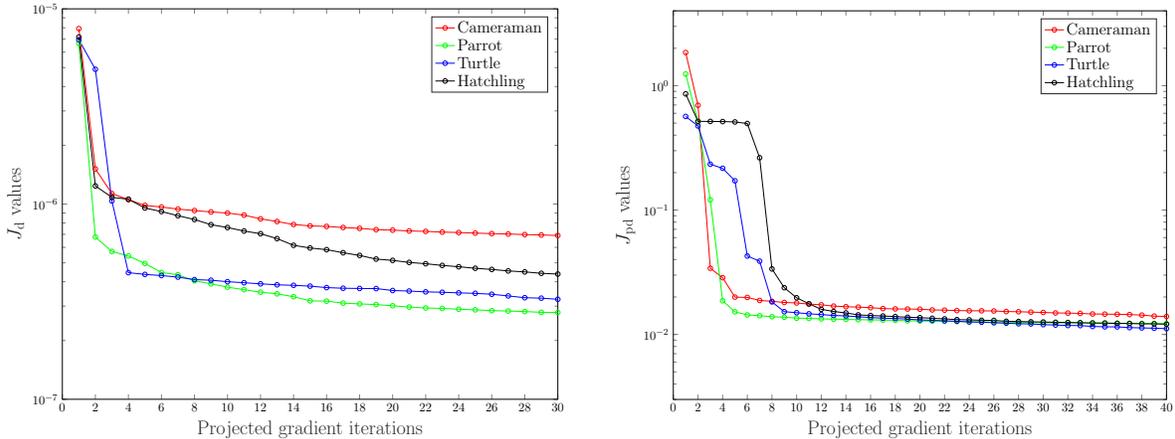

\begin{figure}[h!]
	\centering	
	%\cmmnt{ 
	\begin{minipage}[t]{0.23\textwidth}
	\includegraphics[width=0.95\textwidth]{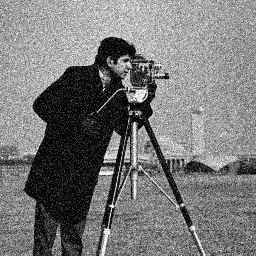}
	\end{minipage}
	%}
	\begin{minipage}[t]{0.23\textwidth}
	\includegraphics[width=0.95\textwidth]{parrot_noisy_01}
	\end{minipage}
	\begin{minipage}[t]{0.23\textwidth}
	\includegraphics[width=0.95\textwidth]{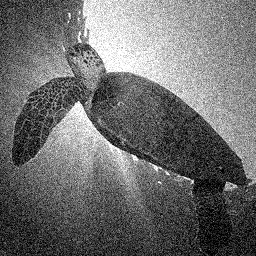}
	\end{minipage}
	\begin{minipage}[t]{0.23\textwidth}
	\includegraphics[width=0.95\textwidth]{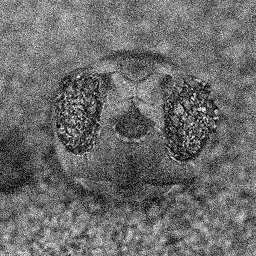}
	\end{minipage}
	
	 \vspace{-6pt}
	\begin{minipage}[t]{0.23\textwidth}
	\centering \scalebox{.65}{PSNR=20.00, SSIM=0.3304}
	\end{minipage}%}
	\begin{minipage}[t]{0.23\textwidth}
	\centering \scalebox{.65}{PSNR=20.04, SSIM=0.2773}
	\end{minipage}
	\begin{minipage}[t]{0.23\textwidth}
	 \centering \scalebox{.65}{PSNR=19.99, SSIM=0.2448}
	\end{minipage}
	\begin{minipage}[t]{0.23\textwidth}
	\centering \scalebox{.65}{PSNR=20.00, SSIM=0.3349}
	\end{minipage}\vspace{2pt}
	
	\begin{minipage}[t]{0.23\textwidth}
	\includegraphics[width=0.95\textwidth]{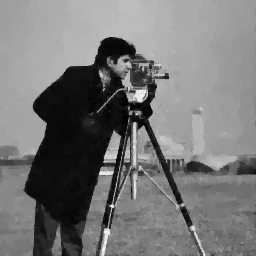}
	\end{minipage}
	\begin{minipage}[t]{0.23\textwidth}
	\includegraphics[width=0.95\textwidth]{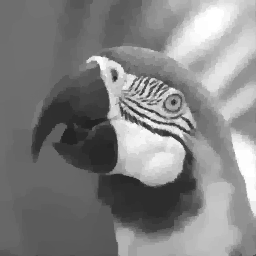}
	\end{minipage}
	\begin{minipage}[t]{0.23\textwidth}
	\includegraphics[width=0.95\textwidth]{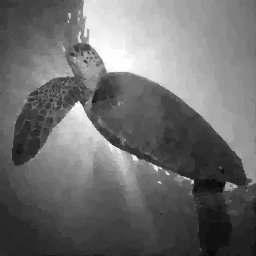}
	\end{minipage}
	\begin{minipage}[t]{0.23\textwidth}
	\includegraphics[width=0.95\textwidth]{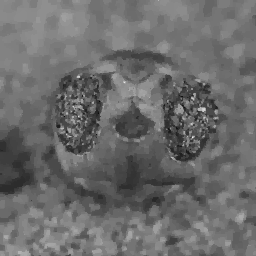}
	\end{minipage}
	
	\vspace{-6pt}
	\begin{minipage}[t]{0.23\textwidth}
	\centering \scalebox{.65}{PSNR=\textbf{27.85}, SSIM=\textbf{0.8259}}
	\end{minipage}
	\begin{minipage}[t]{0.23\textwidth}
	\centering \scalebox{.65}{PSNR=28.96, SSIM=0.8477}
	\end{minipage}
	\begin{minipage}[t]{0.23\textwidth}
	 \centering \scalebox{.65}{PSNR=29.60, SSIM=0.8176}
	\end{minipage}
	\begin{minipage}[t]{0.23\textwidth}
	\centering \scalebox{.65}{PSNR=27.55, SSIM=0.7750}
	\end{minipage}\vspace{2pt}

	%\cmmnt{ 
	\begin{minipage}[t]{0.23\textwidth}
	\includegraphics[width=0.95\textwidth]{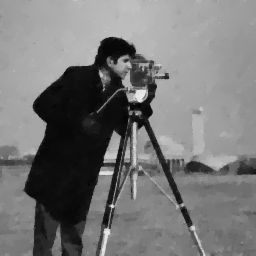}
	\end{minipage}%}
	\begin{minipage}[t]{0.23\textwidth}
	\includegraphics[width=0.95\textwidth]{parrot_scalarTGVpd}
	\end{minipage}
	\begin{minipage}[t]{0.23\textwidth}
	\includegraphics[width=0.95\textwidth]{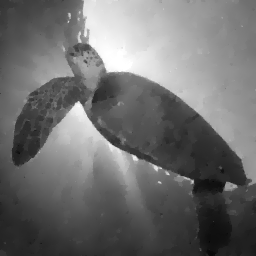}
	\end{minipage}
	\begin{minipage}[t]{0.23\textwidth}
	\includegraphics[width=0.95\textwidth]{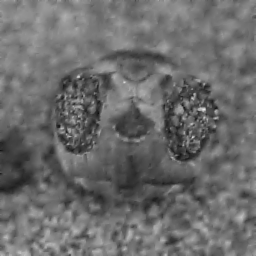}
	\end{minipage}
	
	\vspace{-6pt}
	\begin{minipage}[t]{0.23\textwidth}
	\centering \scalebox{.65}{PSNR=26.87, SSIM=0.8070}
	\end{minipage}%}
	\begin{minipage}[t]{0.23\textwidth}
	\centering \scalebox{.65}{PSNR=28.61, SSIM=0.8588}
	\end{minipage}
	\begin{minipage}[t]{0.23\textwidth}
	 \centering \scalebox{.65}{PSNR=29.24, SSIM=0.8273}
	\end{minipage}
	\begin{minipage}[t]{0.23\textwidth}
	\centering \scalebox{.65}{PSNR=27.82, SSIM=\textbf{0.8108}}
	\end{minipage}\vspace{2pt}
	
	%\cmmnt{ 
	\begin{minipage}[t]{0.23\textwidth}
	\includegraphics[width=0.95\textwidth]{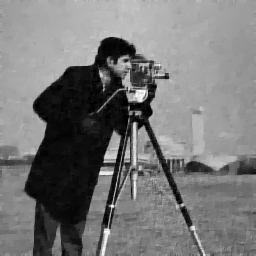}
	\end{minipage}%}
	\begin{minipage}[t]{0.23\textwidth}
	\includegraphics[width=0.95\textwidth]{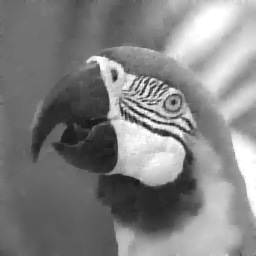}
	\end{minipage}
	\begin{minipage}[t]{0.23\textwidth}
	\includegraphics[width=0.95\textwidth]{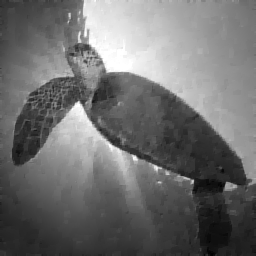}
	\end{minipage}
	\begin{minipage}[t]{0.23\textwidth}
	\includegraphics[width=0.95\textwidth]{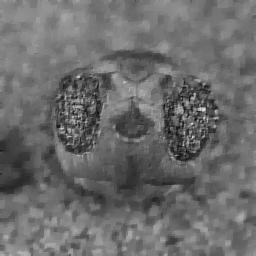}
	\end{minipage}
	
	\vspace{-6pt}
	\begin{minipage}[t]{0.23\textwidth}
	\centering \scalebox{.65}{PSNR=27.50, SSIM=0.8061}
	\end{minipage}%}
	\begin{minipage}[t]{0.23\textwidth}
	\centering \scalebox{.65}{PSNR=29.36, SSIM=\textbf{0.8653}}
	\end{minipage}
	\begin{minipage}[t]{0.23\textwidth}
	 \centering \scalebox{.65}{PSNR=29.10, SSIM=0.8231}
	\end{minipage}
	\begin{minipage}[t]{0.23\textwidth}
	\centering \scalebox{.65}{PSNR=27.67, SSIM=0.7884}
	\end{minipage}\vspace{2pt}
	
	\begin{minipage}[t]{0.23\textwidth}
	\includegraphics[width=0.95\textwidth]{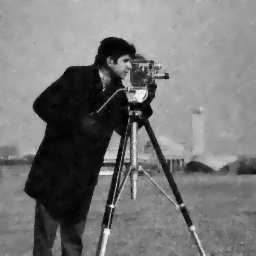}
	\end{minipage}
	\begin{minipage}[t]{0.23\textwidth}
	\includegraphics[width=0.95\textwidth]{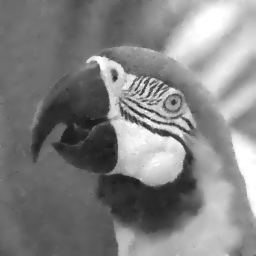}
	\end{minipage}
	\begin{minipage}[t]{0.23\textwidth}
	\includegraphics[width=0.95\textwidth]{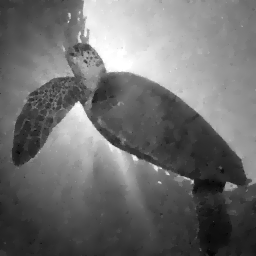}
	\end{minipage}
	\begin{minipage}[t]{0.23\textwidth}
	\includegraphics[width=0.95\textwidth]{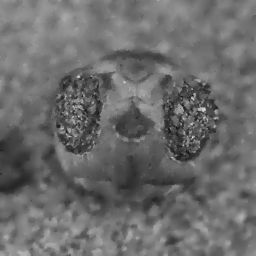}
	\end{minipage}	
	
        \vspace{-6pt}
	\begin{minipage}[t]{0.23\textwidth}
	\centering \scalebox{.65}{PSNR=27.42, SSIM=0.8077}
	\end{minipage}%}
	\begin{minipage}[t]{0.23\textwidth}
	\centering \scalebox{.65}{PSNR=\textbf{29.47}, SSIM=0.8628}
	\end{minipage}
	\begin{minipage}[t]{0.23\textwidth}
	 \centering \scalebox{.65}{PSNR=\textbf{29.63}, SSIM=\textbf{0.8305}}
	\end{minipage}
	\begin{minipage}[t]{0.23\textwidth}
	\centering \scalebox{.65}{PSNR=\textbf{28.01}, SSIM=0.8037}
	\end{minipage}
	
%	%\cmmnt{ 
%	\begin{minipage}[t]{0.23\textwidth}
%	\includegraphics[width=0.95\textwidth]{figures/weightedTGV_01/alpha_dorsal/cam_alpha.png}
%	\end{minipage}%}
%	\begin{minipage}[t]{0.23\textwidth}
%	\includegraphics[width=0.95\textwidth]{figures/weightedTGV_01/alpha_dorsal/parrot_alpha.png}
%	\end{minipage}
%	\begin{minipage}[t]{0.23\textwidth}
%	\includegraphics[width=0.95\textwidth]{figures/weightedTGV_01/alpha_dorsal/orestis_alpha.png}
%	\end{minipage}
%	\begin{minipage}[t]{0.23\textwidth}
%	\includegraphics[width=0.95\textwidth]{figures/weightedTGV_01/alpha_dorsal/hatchling_alpha.png}
%	\end{minipage}

\caption{First row: noisy images. Second row: bilevel TV.  Third row: Best scalar TGV  (SSIM). Fourth row: bilevel TGV--dual. Fifth row: bilevel TGV--primal-dual}
% Fifth row: bilevel TGV-primal dual}
\label{fig:weightedTGV_01}
\end{figure}	

\begin{figure}[h!]
	\centering	
	%\cmmnt{ 
	\begin{minipage}[t]{0.23\textwidth}
	\includegraphics[width=0.95\textwidth, trim={3cm 4.5cm 2cm 0.5cm},clip]{cam_noisy_01}
	\end{minipage}
	%}
	\begin{minipage}[t]{0.23\textwidth}
	\includegraphics[width=0.95\textwidth, trim={3.5cm 4.0cm 1.5cm 1.0cm},clip]{parrot_noisy_01}
	\end{minipage}
	\begin{minipage}[t]{0.23\textwidth}
	\includegraphics[width=0.95\textwidth, trim={1.5cm 4.0cm 3.5cm 1.0cm},clip]{orestis_noisy_01}
	\end{minipage}
	\begin{minipage}[t]{0.23\textwidth}
	\includegraphics[width=0.95\textwidth, trim={5.0cm 3.0cm 0.0cm 2.0cm},clip]{hatchling_noisy_01}
	\end{minipage}
	
	 \vspace{-5pt}
	\begin{minipage}[t]{0.95\textwidth}
	\centering \footnotesize{Noisy}
	\end{minipage}\vspace{4pt}
	
	\begin{minipage}[t]{0.23\textwidth}
	\includegraphics[width=0.95\textwidth, trim={3cm 4.5cm 2cm 0.5cm},clip]{cam_wTV}
	\end{minipage}
	\begin{minipage}[t]{0.23\textwidth}
	\includegraphics[width=0.95\textwidth, trim={3.5cm 4.0cm 1.5cm 1.0cm},clip]{parrot_wTV}
	\end{minipage}
	\begin{minipage}[t]{0.23\textwidth}
	\includegraphics[width=0.95\textwidth, trim={1.5cm 4.0cm 3.5cm 1.0cm},clip]{orestis_wTV}
	\end{minipage}
	\begin{minipage}[t]{0.23\textwidth}
	\includegraphics[width=0.95\textwidth, trim={5.0cm 3.0cm 0.0cm 2.0cm},clip]{hatchling_wTV}
	\end{minipage}
	
	 \vspace{-5pt}
	\begin{minipage}[t]{0.95\textwidth}
	\centering \footnotesize{Bilevel TV}
	\end{minipage}\vspace{4pt}
	
	\begin{minipage}[t]{0.23\textwidth}
	\includegraphics[width=0.95\textwidth, trim={3cm 4.5cm 2cm 0.5cm},clip]{cam_scalarTGVpd}
	\end{minipage}
	\begin{minipage}[t]{0.23\textwidth}
	\includegraphics[width=0.95\textwidth, trim={3.5cm 4.0cm 1.5cm 1.0cm},clip]{parrot_scalarTGVpd}
	\end{minipage}
	\begin{minipage}[t]{0.23\textwidth}
	\includegraphics[width=0.95\textwidth, trim={1.5cm 4.0cm 3.5cm 1.0cm},clip]{orestis_scalarTGVpd}
	\end{minipage}
	\begin{minipage}[t]{0.23\textwidth}
	\includegraphics[width=0.95\textwidth,  trim={5.0cm 3.0cm 0.0cm 2.0cm},clip]{hatchling_scalarTGV}
	\end{minipage}
	
	\vspace{-5pt}
	\begin{minipage}[t]{0.95\textwidth}
	\centering \footnotesize{Best scalar TGV reconstructions (SSIM)}
	\end{minipage}\vspace{4pt}
	
	\begin{minipage}[t]{0.23\textwidth}
	\includegraphics[width=0.95\textwidth, trim={3cm 4.5cm 2cm 0.5cm},clip]{cam_wTGV}
	\end{minipage}%}
	\begin{minipage}[t]{0.23\textwidth}
	\includegraphics[width=0.95\textwidth, trim={3.5cm 4.0cm 1.5cm 1.0cm},clip]{parrot_wTGV}
	\end{minipage}
	\begin{minipage}[t]{0.23\textwidth}
	\includegraphics[width=0.95\textwidth, trim={1.5cm 4.0cm 3.5cm 1.0cm},clip]{orestis_wTGV}
	\end{minipage}
	\begin{minipage}[t]{0.23\textwidth}
	\includegraphics[width=0.95\textwidth,  trim={5.0cm 3.0cm 0.0cm 2.0cm},clip]{hatchling_wTGV}
	\end{minipage}
	
	\vspace{-5pt}
	\begin{minipage}[t]{0.95\textwidth}
	\centering \footnotesize{Bilevel TGV-dual}
	\end{minipage}\vspace{4pt}
	
	\begin{minipage}[t]{0.23\textwidth}
	\includegraphics[width=0.95\textwidth, trim={3cm 4.5cm 2cm 0.5cm},clip]{cam_wTGVpd}
	\end{minipage}
	\begin{minipage}[t]{0.23\textwidth}
	\includegraphics[width=0.95\textwidth, trim={3.5cm 4.0cm 1.5cm 1.0cm},clip]{parrot_wTGVpd}
	\end{minipage}
	\begin{minipage}[t]{0.23\textwidth}
	\includegraphics[width=0.95\textwidth, trim={1.5cm 4.0cm 3.5cm 1.0cm},clip]{orestis_wTGVpd}
	\end{minipage}
	\begin{minipage}[t]{0.23\textwidth}
	\includegraphics[width=0.95\textwidth,  trim={5.0cm 3.0cm 0.0cm 2.0cm},clip]{hatchling_wTGVpd}
	\end{minipage}	
	
	\vspace{-5pt}
	\begin{minipage}[t]{0.95\textwidth}
	\centering \footnotesize{Bilevel TGV--primal-dual}
	\end{minipage}
\caption{Details of the reconstructions shown in Figure \ref{fig:weightedTGV_01}}
\label{fig:weightedTGV_01_Details}
\end{figure}

{\small
\setlength\extrarowheight{2pt}
\begin{table}[h!]
  \centering
  \begin{tabular}{|c|c|c|c|c|}
  %\hline
%$\sigma^{2}=0.01$ &  $\alpha\tv$ best PSNR &   $\alpha\tv$ best SSIM & wTV & $\sigma^{2}=4\times 10^{-4}$ &  $\alpha\tv$ best PSNR &   $\alpha\tv$ best SSIM &  wTV\\\hline
  \hline
  $\sigma^{2}=0.01$                 			&Cameraman             			& Parrot                   	               &  Turtle 			   	    & Hatchling \\\hline
 scalar TV \tiny{(PSNR)}     			& $27.54$, $0.7857$  		        & $28.88$, $0.8119$ 	       & $29.27$, $0.7924$, 	    & $27.57$, $0.7597$\\\hline
 scalar TV \tiny{(SSIM)}	      			& $27.19$, $0.8064$  		        & $28.51$, $0.8421$	               & $29.11$, $0.8044$  	             & $27.46$, $0.7687$\\\hline
bilevel $\tv$  			     			& $\bf{27.85}$, $\bf{0.8259}$            & $28.96$, $0.8477$                & $29.60$, $0.8176$              & $27.55$, $0.7750$\\\hline
 scalar  TGV-dual  \tiny{(PSNR)}    		& $27.38$, $0.7730$    			& $29.07$, $0.8438$	               & $28.97$, $0.8032$	            & $28.00$, $0.8032$\\\hline
 scalar  TGV--dual  \tiny{(SSIM)}     		& $26.95$, $0.8043$				& $28.61$, $0.8575$		       & $28.70$, $0.8200$              & $27.82$, $\bf{0.8108}$ \\\hline
bilevel TGV--dual  	  		        		& $27.50$, $0.8061$		      	        & $29.36$, $\bf{0.8653}$         & $29.10$, $0.8231$               & $27.67$, $0.7884$ \\\hline
scalar  TGV--primal-dual  \tiny{(PSNR)}    & $27.23$, $0.7873$				& $29.10$, $0.8325$		      &	 $29.40$, $0.8230$		    & $27.88$, $0.7991$     \\\hline
scalar  TGV--primal-dual  \tiny{(SSIM)}	& $26.87$, $0.8070$				& $28.61$, $0.8588$		      & $29.24$, $0.8273$		    & $27.71$, $0.8024$     \\\hline
bilevel TGV--primal-dual 				& $27.42$, $0.8077$				& $\bf{29.47}$, $0.8628$	      &	 $\bf{29.63}$, \bf{0.8305}	    &  $\bf{28.01}$, $0.8037$     \\\hline
  \end{tabular}\vspace{8pt}
  \caption{PSNR and SSIM comparisons for the images of Figure \ref{fig:weightedTGV_01}. Every cell contains the corresponding PSNR and SSIM value}
  \label{tab:psnr_ssim}
\end{table}
}

For the first series of examples we  keep the parameter $\alpha_{0}$  scalar, whose value nevertheless is determined by the bilevel algorithms. We depict the examples in Figure \ref{fig:weightedTGV_01}. The first row shows the  noisy images, while the second contains the bilevel TV results  \cite{hintermuellerPartI}. The third row depicts the best scalar TGV results with respect to SSIM, either using the dual or the primal-dual approach -- whichever had the largest value -- where we have computed the optimal scalars $\alpha_{0}, \alpha_{1}$ with a manual grid method. The fourth and the fifth rows show the results of \eqref{bilevelTGV_h} and \eqref{bilevelTGV_h_pd} respectively. Detailed sections  of all the images of Figure \ref{fig:weightedTGV_01} are highlighted in Figure \ref{fig:weightedTGV_01_Details}. The  weight functions $\alpha_{1}$ for the bilevel TV and the  bilevel TGV algorithms are shown in Figure \ref{weights_contour}. In Table \ref{tab:psnr_ssim} we report all PSNR and SSIM values of the best scalar methods (scalar TV, scalar TGV--dual, scalar TGV--primal-dual) with respect to both   quality measures, as well as the corresponding values of the three bilevel algorithms. We next comment on the results for each image.\\[3pt]
\noindent
\emph{Cameraman}: Here both the best PSNR and SSIM are obtained by the bilevel TV algorithm. This is probably not surprising due to the piecewise constant nature of this image. However, both bilevel TGV algorithms improve upon their scalar versions with respect to both measures. It is interesting to observe the two different spatial weights $\alpha_{1}$ produced by the two bilevel TGV algorithms, see the last two functions at the first column of Figure \ref{weights_contour}. The dual TGV algorithm, solving the anisotropic version of TGV, has the tendency to blur thin objects that have a 45 degree orientation with respect to the pixel grid, like for instance the middle part of the cameraman's tripod. We see that the weight $\alpha_{1}$ drops significantly at this area aiming to reduce this effect. Otherwise both bilevel algorithms preserve better the detailed area of the camera with the weights having small values there.\\[3pt]
\noindent
\emph{Parrot}: Here the best results with respect to both PSNR and SSIM are achieved by the two bilevel TGV algorithms, \eqref{bilevelTGV_h_pd} and \eqref{bilevelTGV_h}, respectively.  There is significant improvement over all TV methods, which is due to the parameters being chosen in a way such that the staircasing effect diminishes. Furthermore, we  observe improvement over the scalar TGV results especially around the parrot's eye, where the weights $\alpha_{1}$ drop significantly; see the second column of Figure \ref{weights_contour}. \\[3pt]
\noindent
\emph{Turtle}: We get analogous results here as well, with the bilevel TGV \eqref{bilevelTGV_h_pd} producing the best results both with respect to PSNR and SSIM. There a significant reduction of the staircasing effect, while the weight $\alpha_{1}$ drops in the detailed areas of the image (head and flipper of the turtle).\\[3pt]
\noindent
\emph{Hatchling}: In this image, the best PSNR is achieved by \eqref{bilevelTGV_h_pd}, but only marginally. In fact, the best SSIM is achieved by the scalar version of the dual TGV algorithm also with a comparable PSNR. Similarly at least with respect to PSNR, the scalar TV is marginally better than bilevel TV. We attribute this to the fact that the natural oscillatory features of the image are interpreted as noise by the upper level objective. Nevertheless, all the bilevel methods are able to locate and preserve better the eyes area, i.e., sand in focus, with the weight $\alpha_{1}$ dropping there significantly.

%\cmmnt{
\begin{figure}[h!]
	\centering	
	\begin{minipage}[t]{0.23\textwidth}
	  	\resizebox{0.95\textwidth}{!}{
%	\begin{tikzpicture}
%		\begin{axis}[xmin=0,xmax=255,ymin=0,ymax=255,width=9cm,
%		zticklabel style={
%          	  /pgf/number format/fixed,
%            	/pgf/number format/precision=1,
%          	  /pgf/number format/fixed zerofill
%      			  },
%		height=9.25cm,grid=both, zmin=0.00015]
%		\addplot3 graphics[points={%
%(17.9949,203.9081,0.00041755) => (120.45,307.1475)
%(212.6781,181.815,0.00035962) => (305.14,292.0913)
%(105.5411,145.7219,0.00021256) => (245.9187,121.4537)
%(234.1194,27.8209,0.00023354) => (458.7138,144.54)
%		}]
%		{cam_tv_trans2};
%		\end{axis}	
%	\end{tikzpicture}
	\begin{tikzpicture}
	\node[anchor=south west,inner sep=0] at (0,0) {\includegraphics{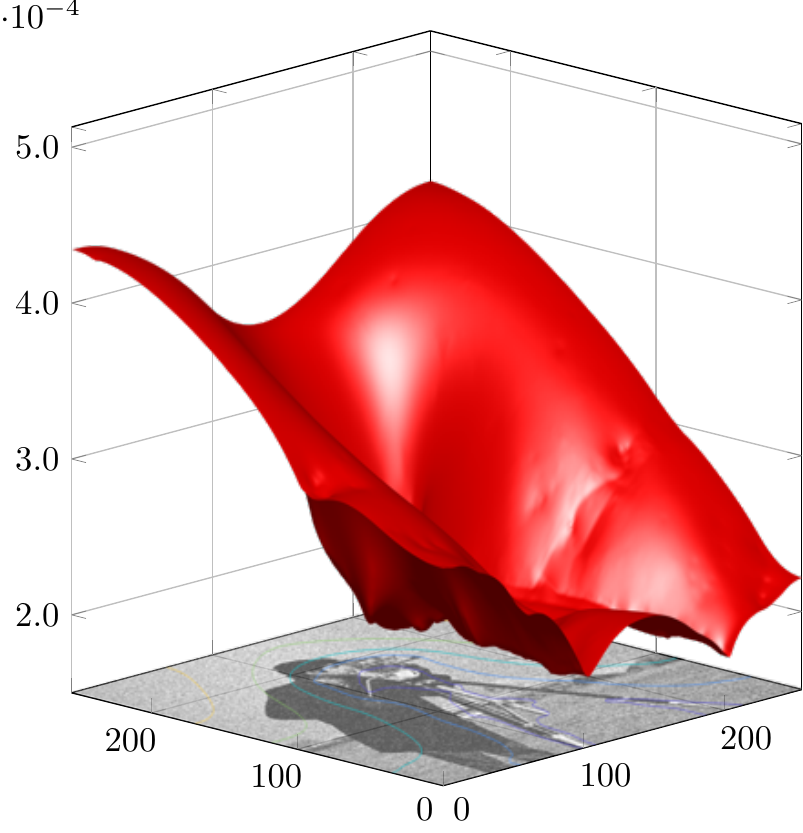}};
	\end{tikzpicture}
}
	\end{minipage}
		\begin{minipage}[t]{0.23\textwidth}
	  	\resizebox{0.95\textwidth}{!}{
%	\begin{tikzpicture}
%		\begin{axis}[xmin=0,xmax=255,ymin=0,ymax=255,width=9cm,
%		zticklabel style={
%          	  /pgf/number format/fixed,
%            	/pgf/number format/precision=1,
%          	  /pgf/number format/fixed zerofill
%      			  },
%		height=9.25cm,grid=both, zmin=0.0001000]
%		\addplot3 graphics[points={%
%(6.452,208.7468,0.00050973) => (106.3975,291.0875)
%(12.1975,9.9079,0.00061589) => (286.0687,310.1587)
%(84.5284,86.6081,0.00048899) => (280.0462,270.0087)
%(168.4054,115.083,0.00023713) => (326.2188,149.5588)
%		}]
%		{parrot_tv_trans2};
%		\end{axis}	
%	\end{tikzpicture}
	\begin{tikzpicture}
	\node[anchor=south west,inner sep=0] at (0,0) {\includegraphics{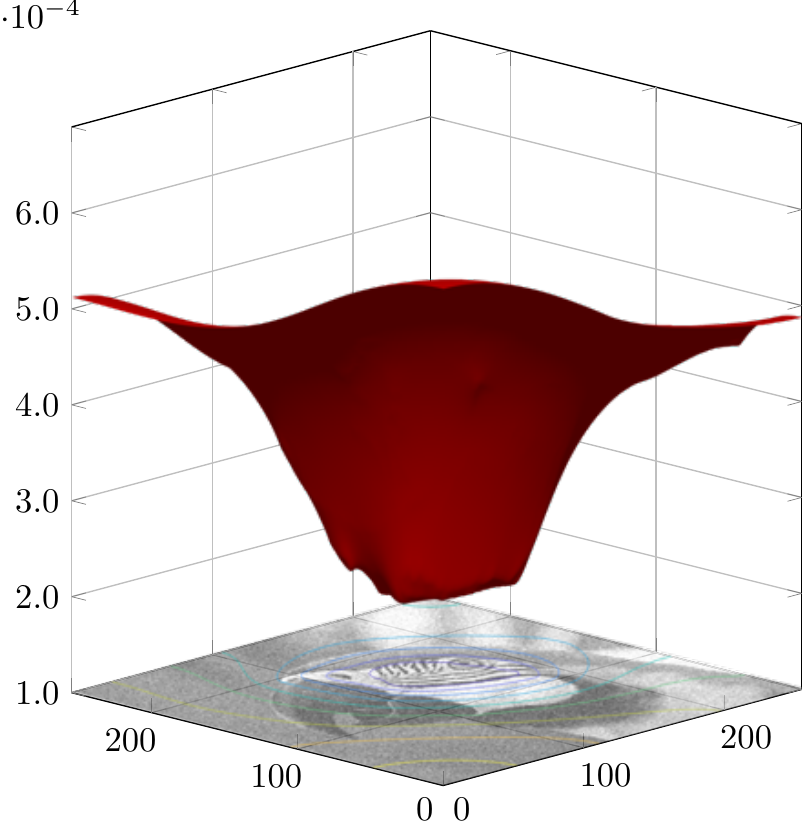}};
	\end{tikzpicture}
	}
	\end{minipage}
		\begin{minipage}[t]{0.23\textwidth}
	  	\resizebox{0.95\textwidth}{!}{
%	\begin{tikzpicture}
%		\begin{axis}[xmin=0,xmax=255,ymin=0,ymax=255,width=9cm,
%		zticklabel style={
%          	  /pgf/number format/fixed,
%            	/pgf/number format/precision=1,
%          	  /pgf/number format/fixed zerofill
%      			  },
%		height=9.25cm,grid=both, zmin=0.00015]
%		\addplot3 graphics[points={%
%(29.5323,224.6306,0.00028101) => (111.4162,155.5812)
%(220.2364,219.933,0.00046472) => (278.0387,330.2337)
%(76.5611,90.2891,0.00030866) => (270.0087,156.585)
%(230.0169,63.2737,0.00020543) => (424.5863,111.4162)
%		}]
%		{orestis_tv_trans2};
%		\end{axis}	
%	\end{tikzpicture}
	\begin{tikzpicture}
	\node[anchor=south west,inner sep=0] at (0,0) {\includegraphics{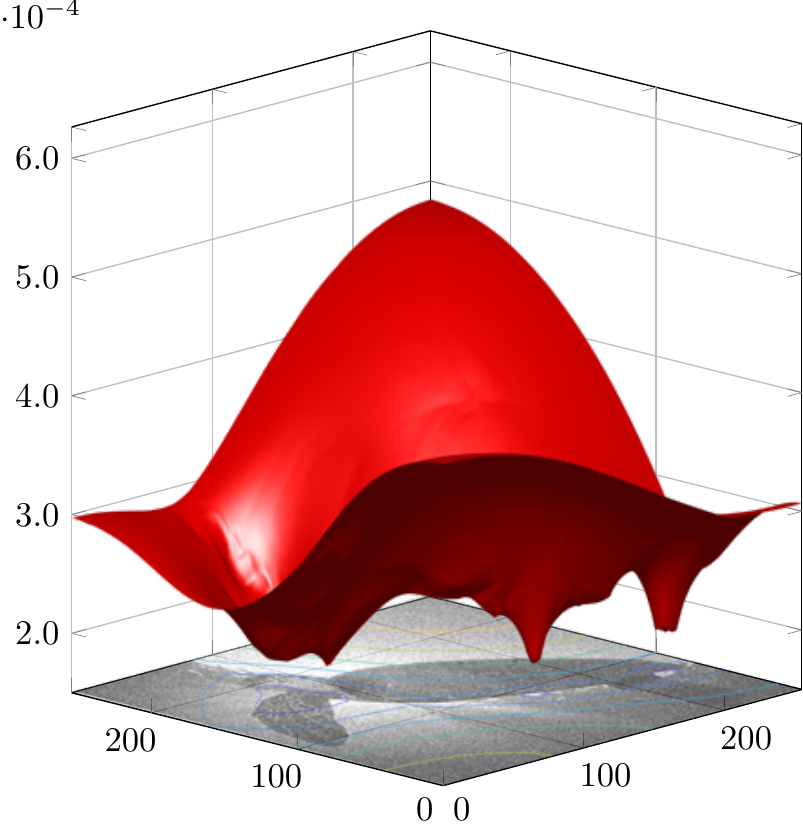}};
	\end{tikzpicture}
	}
	\end{minipage}
		\begin{minipage}[t]{0.23\textwidth}
	  	\resizebox{0.95\textwidth}{!}{
%	\begin{tikzpicture}
%		\begin{axis}[xmin=0,xmax=255,ymin=0,ymax=255,width=9cm,
%		zticklabel style={
%          	  /pgf/number format/fixed,
%            	/pgf/number format/precision=1,
%          	  /pgf/number format/fixed zerofill
%      			  },
%		height=9.25cm,grid=both, zmin=0.00015]
%		\addplot3 graphics[points={%
%(47.1942,232.1054,0.00035702) => (120.45,276.0312)
%(250.7675,226.5904,0.00035409) => (298.1137,318.1888)
%(52.4415,126.4154,0.00023492) => (217.8137,131.4912)
%(210.0773,27.4404,0.00030488) => (438.6387,215.8062)
%		}]
%		{hatchling_tv_trans2};
%		\end{axis}	
%	\end{tikzpicture}
	\begin{tikzpicture}
	\node[anchor=south west,inner sep=0] at (0,0) {\includegraphics{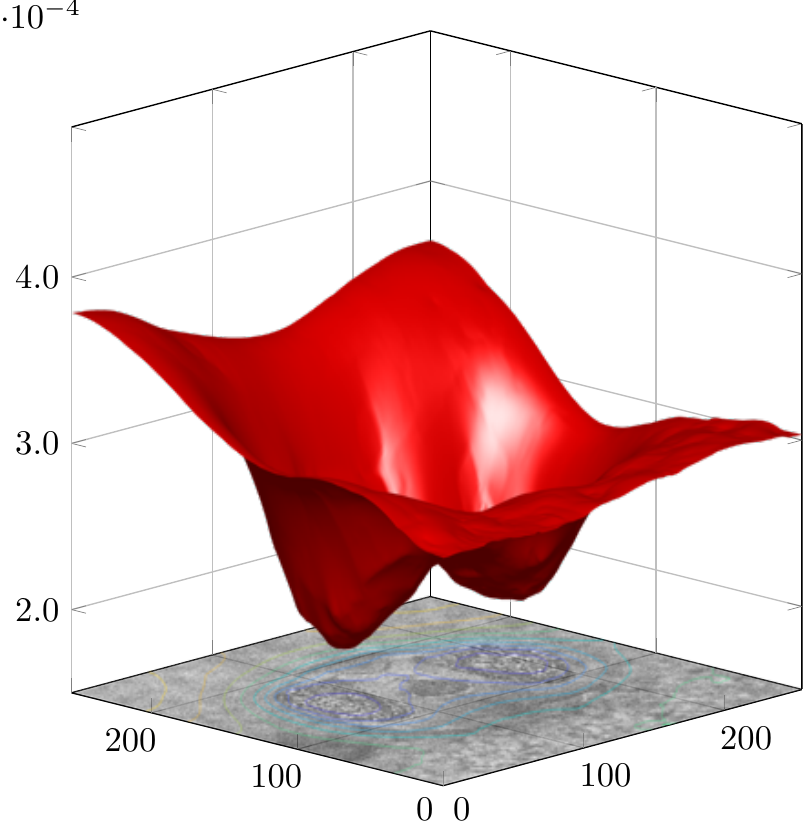}};
	\end{tikzpicture}
	}
	\end{minipage}

	\begin{minipage}[t]{0.23\textwidth}
	  	\resizebox{0.95\textwidth}{!}{
%	\begin{tikzpicture}
%		\begin{axis}[xmin=0,xmax=255,ymin=0,ymax=255,width=9cm,
%		zticklabel style={
%          	  /pgf/number format/fixed,
%            	/pgf/number format/precision=1,
%          	  /pgf/number format/fixed zerofill
%      			  },
%		height=9.25cm,grid=both, zmin=0.0000]
%		\addplot3 graphics[points={%
%		(44.7203,220.5818,0.00024119) => (128.48,292.0913)
%		(222.5813,208.5059,0.00025007) => (291.0875,338.2637)
%		(198.9708,94.3592,0.00018854) => (370.3837,249.9338)
%		(153.4041,68.6252,5.6184e-05) => (354.3237,109.4087)
%		}]
%		{cam_tgv_trans2};
%		\end{axis}	
%	\end{tikzpicture}
	\begin{tikzpicture}
	\node[anchor=south west,inner sep=0] at (0,0) {\includegraphics{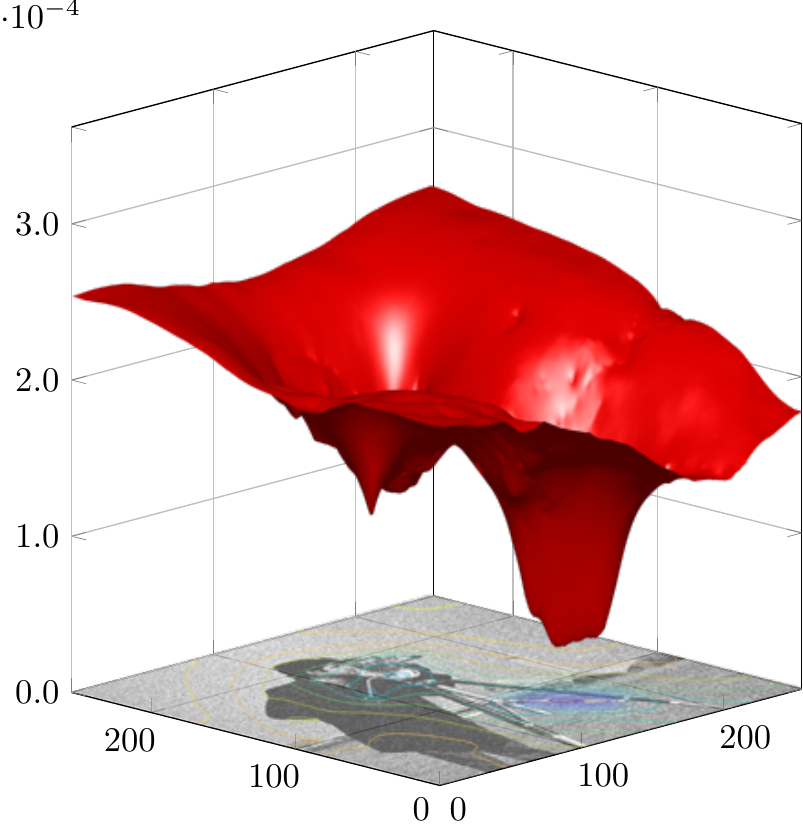}};
	\end{tikzpicture}	
	}
	\end{minipage}
		\begin{minipage}[t]{0.23\textwidth}
	  	\resizebox{0.95\textwidth}{!}{
%	\begin{tikzpicture}
%		\begin{axis}[xmin=0,xmax=255,ymin=0,ymax=255,width=9cm,
%		zticklabel style={
%          	  /pgf/number format/fixed,
%            	/pgf/number format/precision=1,
%          	  /pgf/number format/fixed zerofill
%      			  },
%		height=9.25cm,grid=both, zmin=0.00005000]
%		\addplot3 graphics[points={%
%(129.838,142.9232,0.00012616) => (269.005,129.4837)
%(66.6772,218.9488,0.00028863) => (148.555,263.9862)
%(221.5188,210.9053,0.00031536) => (288.0762,319.1925)
%(176.6583,18.0046,0.00031954) => (418.5638,270.0087)
%		}]
%		{parrot_tgv_trans2};
%		\end{axis}	
%	\end{tikzpicture}
	\begin{tikzpicture}
	\node[anchor=south west,inner sep=0] at (0,0) {\includegraphics{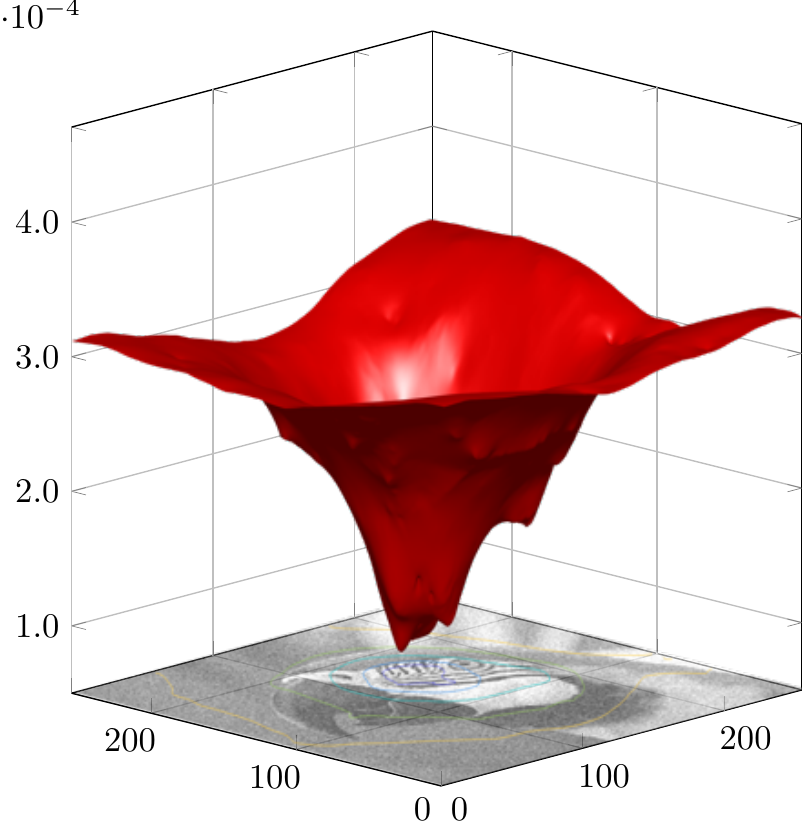}};
	\end{tikzpicture}	
	}
	\end{minipage}
		\begin{minipage}[t]{0.23\textwidth}
	  	\resizebox{0.95\textwidth}{!}{
%	\begin{tikzpicture}
%		\begin{axis}[xmin=0,xmax=255,ymin=0,ymax=255,width=9cm,
%		zticklabel style={
%          	  /pgf/number format/fixed,
%            	/pgf/number format/precision=1,
%          	  /pgf/number format/fixed zerofill
%      			  },
%		height=9.25cm,grid=both, zmin=0.00005]
%		\addplot3 graphics[points={%
%(8.658,207.4516,0.00024322) => (109.4087,235.8813)
%(205.5902,217.218,0.0002584) => (268.0013,297.11)
%(177.748,41.8934,0.00021325) => (398.4887,209.7837)
%(38.7887,148.6277,0.00012909) => (186.6975,122.4575)
%		}]
%		{orestis_tgv_trans2};
%		\end{axis}	
%	\end{tikzpicture}
	\begin{tikzpicture}
	\node[anchor=south west,inner sep=0] at (0,0) {\includegraphics{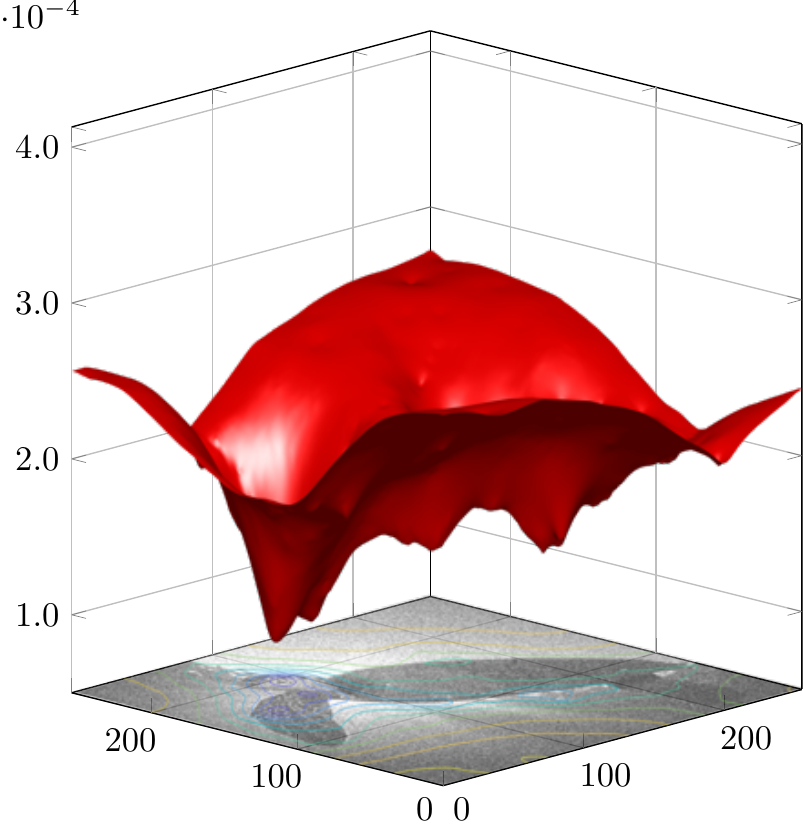}};
	\end{tikzpicture}
	}
	\end{minipage}
		\begin{minipage}[t]{0.23\textwidth}
	  	\resizebox{0.95\textwidth}{!}{
%	\begin{tikzpicture}
%		\begin{axis}[xmin=0,xmax=255,ymin=0,ymax=255,width=9cm,
%		zticklabel style={
%          	  /pgf/number format/fixed,
%            	/pgf/number format/precision=1,
%          	  /pgf/number format/fixed zerofill
%      			  },
%		height=9.25cm,grid=both, zmin=0.0000]
%		\addplot3 graphics[points={%
%(219.4267,221.4451,0.00024888) => (276.0312,339.2675)
%(51.3426,216.7004,0.00023836) => (137.5137,290.0838)
%(53.0496,129.2863,0.00014419) => (215.8062,182.6825)
%(165.7634,69.1422,0.000211) => (364.3612,257.9638)
%		}]
%		{hatchling_tgv_trans2};
%		\end{axis}	
%	\end{tikzpicture}
	\begin{tikzpicture}
	\node[anchor=south west,inner sep=0] at (0,0) {\includegraphics{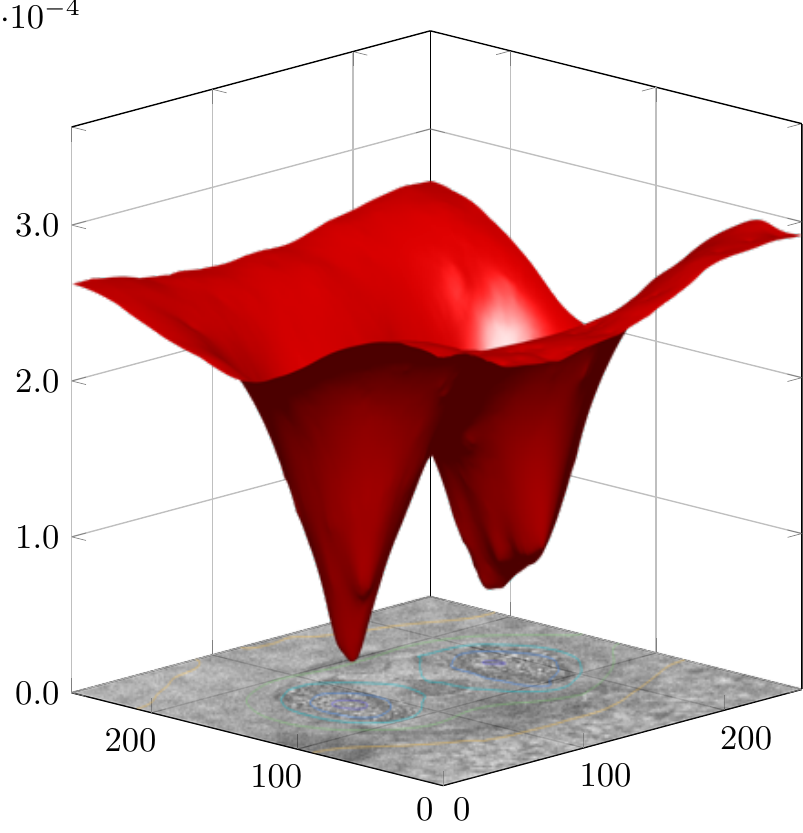}};
	\end{tikzpicture}
	}
	\end{minipage}	
	
\begin{minipage}[t]{0.23\textwidth}
	  	\resizebox{0.95\textwidth}{!}{
%	\begin{tikzpicture}
%		\begin{axis}[xmin=0,xmax=255,ymin=0,ymax=255,width=9cm,
%		zticklabel style={
%          	  /pgf/number format/fixed,
%            	/pgf/number format/precision=1,
%          	  /pgf/number format/fixed zerofill
%      			  },
%		height=9.25cm,grid=both, zmin=0.04, zmax=0.09]
%		\addplot3 graphics[points={%
%(29.1129,231.0518,0.083288) => (105.3937,324.2113)
%(221.0216,230.9307,0.083349) => (269.005,368.3763)
%(102.94,143.0676,0.0656) => (245.9187,214.8025)
%(235.3597,34.3021,0.072706) => (454.6987,263.9862)
%		}]
%		{cam_TGVpd};
%		\end{axis}	
%	\end{tikzpicture}
	\begin{tikzpicture}
	\node[anchor=south west,inner sep=0] at (0,0) {\includegraphics{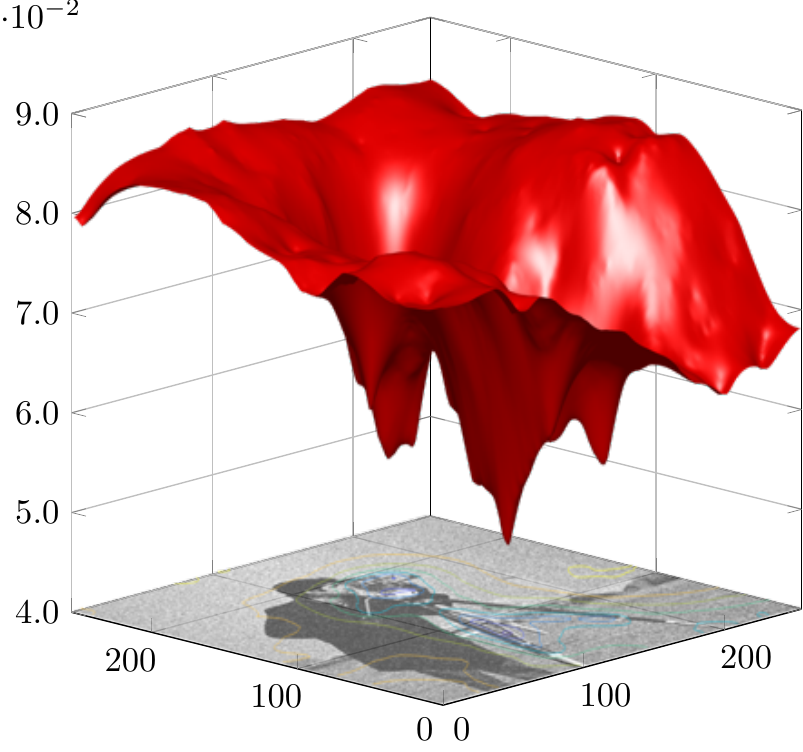}};
	\end{tikzpicture}
	
	}
	\end{minipage}	
\begin{minipage}[t]{0.23\textwidth}
	  	\resizebox{0.95\textwidth}{!}{
%	\begin{tikzpicture}
%		\begin{axis}[xmin=0,xmax=255,ymin=0,ymax=255,width=9cm,
%		zticklabel style={
%          	  /pgf/number format/fixed,
%            	/pgf/number format/precision=1,
%          	  /pgf/number format/fixed zerofill
%      			  },
%		zticklabels={0.02, 0.04, 0.06, 0.08, 0.1},	  
%		height=9.25cm,grid=both, zmin=0.04, zmax=0.1]
%		\addplot3 graphics[points={%
%(135.8037,158.9606,0.050378) => (259.9712,124.465)
%(52.229,224.4075,0.090117) => (131.4912,319.1925)
%(227.3278,198.2457,0.093726) => (303.1325,371.3875)
%(228.0298,13.9476,0.093962) => (465.74,332.2412)
%		}]
%		{parrot_TGVpd};
%		\end{axis}	
%	\end{tikzpicture}
	\begin{tikzpicture}
	\node[anchor=south west,inner sep=0] at (0,0) {\includegraphics{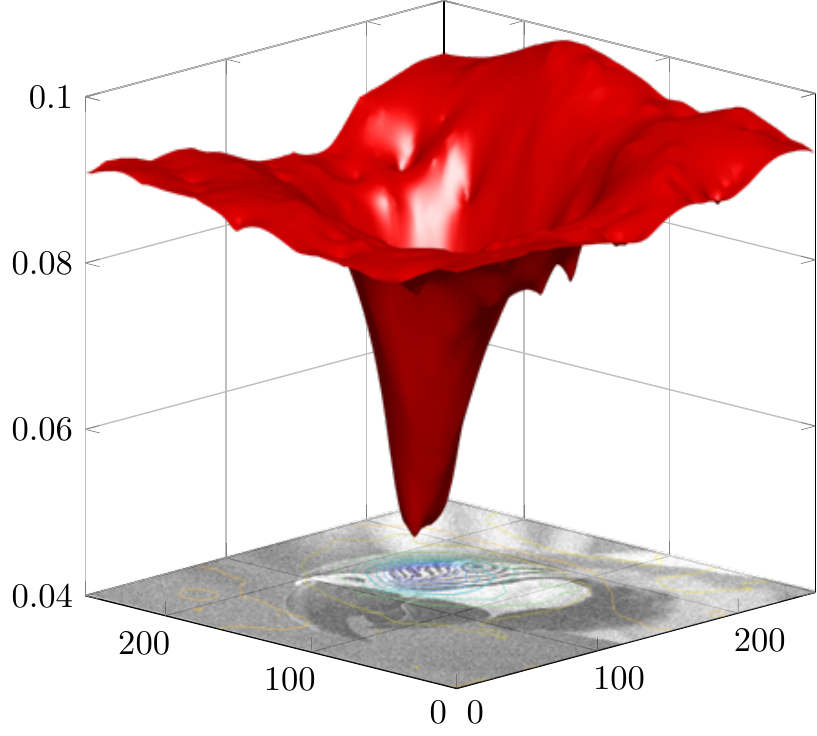}};
	\end{tikzpicture}
	}
	\end{minipage}	
\begin{minipage}[t]{0.23\textwidth}
	  	\resizebox{0.95\textwidth}{!}{
%	\begin{tikzpicture}
%		\begin{axis}[xmin=0,xmax=255,ymin=0,ymax=255,width=9cm,
%		zticklabel style={
%          	  /pgf/number format/fixed,
%            	/pgf/number format/precision=1,
%          	  /pgf/number format/fixed zerofill
%      			  },
%		zticklabels={0.04, 0.05, 0.06, 0.07, 0.08, 0.09, 0.1},	  
%		height=9.25cm,grid=both, zmin=0.05, zmax=0.1]
%		\addplot3 graphics[points={%
%(41.5992,151.3418,0.059819) => (186.6975,108.405)
%(98.7842,231.6849,0.077734) => (164.615,246.9225)
%(215.4277,134.2299,0.085886) => (349.305,301.125)
%(224.4557,35.6163,0.078473) => (443.6575,236.885)
%		}]
%		{orestis_TGVpd};
%		\end{axis}	
%	\end{tikzpicture}
	\begin{tikzpicture}
	\node[anchor=south west,inner sep=0] at (0,0) {\includegraphics{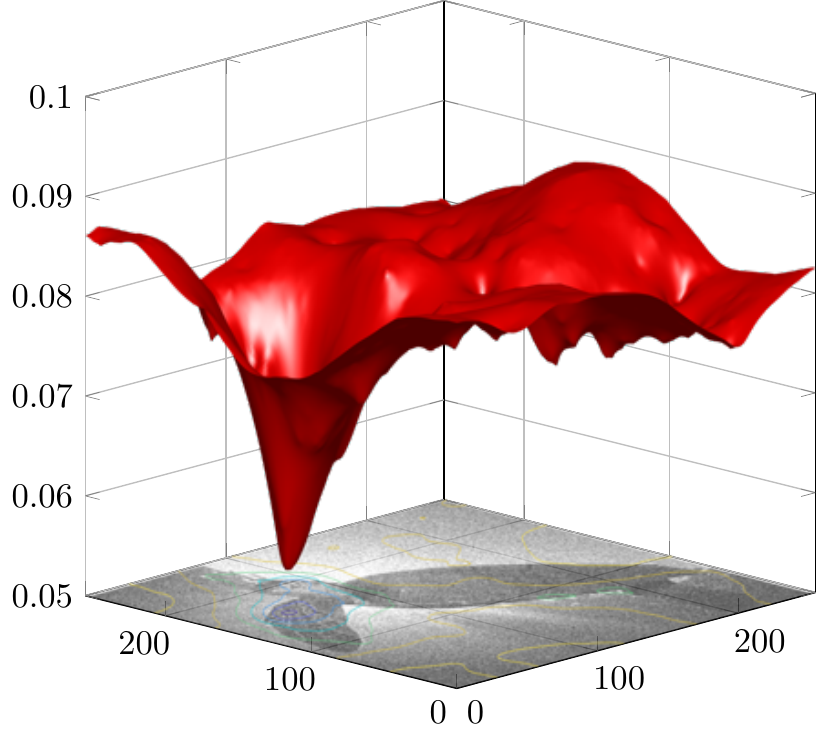}};
	\end{tikzpicture}
	}
	\end{minipage}
	\begin{minipage}[t]{0.23\textwidth}
	  	\resizebox{0.95\textwidth}{!}{
%	\begin{tikzpicture}
%		\begin{axis}[xmin=0,xmax=255,ymin=0,ymax=255,width=9cm,
%		zticklabel style={
%          	  /pgf/number format/fixed,
%            	/pgf/number format/precision=1,
%          	  /pgf/number format/fixed zerofill
%      			  },
%		zticklabels={0.02, 0.04, 0.06,  0.08, 0.1},	  
%		height=9.25cm,grid=both, zmin=0.04, zmax=0.1]
%		\addplot3 graphics[points={%
%(41.2114,219.4769,0.081429) => (126.4725,272.0162)
%(219.718,221.7264,0.089041) => (276.0312,351.3125)
%(172.719,86.1356,0.075302) => (355.3275,241.9038)
%(65.0695,130.6174,0.058828) => (224.84,144.54)
%		}]
%		{hatchling_TGVpd};
%		\end{axis}	
%	\end{tikzpicture}
	\begin{tikzpicture}
	\node[anchor=south west,inner sep=0] at (0,0) {\includegraphics{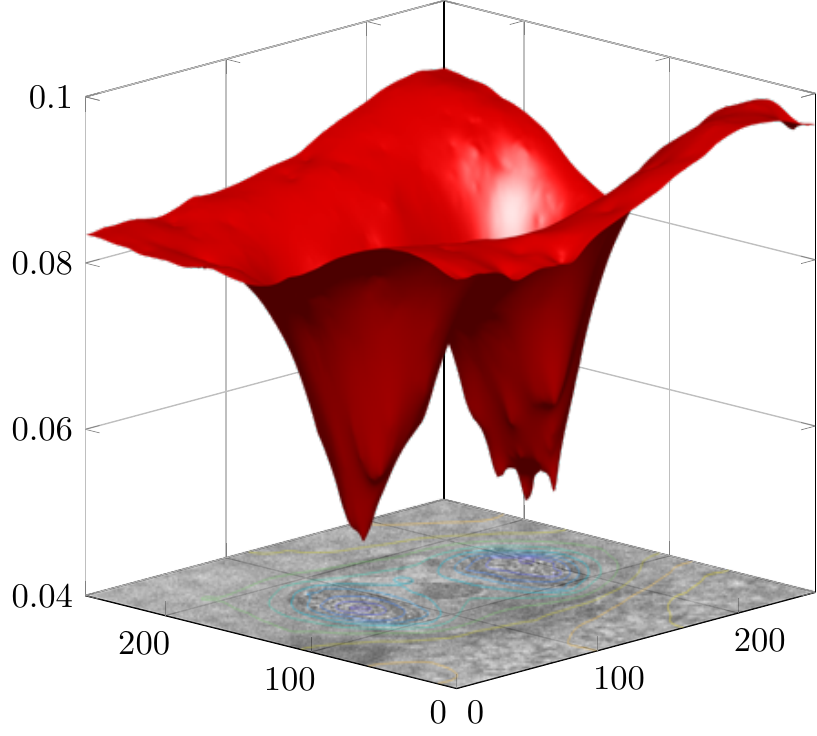}};
	\end{tikzpicture}
	}
	\end{minipage}					
	
\caption{First row: the computed regularization functions $\alpha$ for bilevel TV.\\ Second row: the computed regularization functions $\alpha_{1}$ for bilevel TGV--dual. Third row:  the computed regularization functions $\alpha_{1}$ for bilevel TGV--primal-dual.}	
\label{weights_contour}
\end{figure}
%}

Finally, we show an example where also the weight $\alpha_{0}$ varies spatially. For simplicity we use here only the primal-dual version \eqref{bilevelTGV_h_pd}. We note that by spatially varying both TGV parameters, the reduced problem becomes highly non-convex with many combinations of these parameters leading to similar values for the upper level objective. In order to deal with this, we use the following initialization strategy, which according to our numerical experiments, produces satisfactory results. We keep the spatial weight $\alpha_{1}$ fixed,  as it has been computed from the previous experiments, see the last row of Figure \ref{weights_contour}, and we optimize only with respect to a spatially varying  $\alpha_{0}$. As initialization for $\alpha_{0}$, we set it constant, equal to 5.

%\cmmnt{
\begin{figure}[h!]
	\centering		
	\begin{minipage}[t]{0.23\textwidth}
	  	\resizebox{0.95\textwidth}{!}{
	\begin{tikzpicture}
		\begin{axis}[xmin=0,xmax=255,ymin=0,ymax=255,width=9cm, view={-46}{15},
		zticklabel style={
          	  /pgf/number format/fixed,
            	/pgf/number format/precision=1,
          	  /pgf/number format/fixed zerofill
      			  },
		height=9.25cm,grid=both, zmin=0.000, zmax=0.45]
		\addplot3[fill=blue]
		coordinates{
		(0,0,0.0753)
		(0,255,0.0753)
		(255,255,0.0753)
		(255,0,0.0753)
		};
		\end{axis}	
	\end{tikzpicture}}
	\end{minipage}
	\begin{minipage}[t]{0.23\textwidth}
	  	\resizebox{0.95\textwidth}{!}{
	\begin{tikzpicture}
		\begin{axis}[xmin=0,xmax=255,ymin=0,ymax=255,width=9cm, view={-46}{15},
		zticklabel style={
          	  /pgf/number format/fixed,
            	/pgf/number format/precision=1,
          	  /pgf/number format/fixed zerofill
      			  },
		height=9.25cm,grid=both, zmin=0.000, zmax=0.55]
		\addplot3[fill=blue]
		coordinates{
		(0,0,0.0911)
		(0,255,0.0911)
		(255,255,0.0911)
		(255,0,0.0911)
		};
		\end{axis}	
	\end{tikzpicture}}
	\end{minipage}
		\begin{minipage}[t]{0.23\textwidth}
	  	\resizebox{0.95\textwidth}{!}{
	\begin{tikzpicture}
		\begin{axis}[xmin=0,xmax=255,ymin=0,ymax=255,width=9cm, view={-46}{15},
		zticklabel style={
          	  /pgf/number format/fixed,
            	/pgf/number format/precision=1,
          	  /pgf/number format/fixed zerofill
      			  },
		height=9.25cm,grid=both, zmin=0.000, zmax=0.4]
		\addplot3[fill=blue]
		coordinates{
		(0,0,0.1057)
		(0,255,0.1057)
		(255,255,0.1057)
		(255,0,0.1057)
		};
		\end{axis}	
	\end{tikzpicture}}
	\end{minipage}
	\begin{minipage}[t]{0.23\textwidth}
	  	\resizebox{0.95\textwidth}{!}{
	\begin{tikzpicture}
		\begin{axis}[xmin=0,xmax=255,ymin=0,ymax=255,width=9cm, view={-46}{15},
		zticklabel style={
          	  /pgf/number format/fixed,
            	/pgf/number format/precision=2,
          	  /pgf/number format/fixed zerofill
      			  },
		height=9.25cm,grid=both, zmin=0.000, zmax=0.25]
		\addplot3[fill=blue]
		coordinates{
		(0,0,0.0803)
		(0,255,0.0803)
		(255,255,0.0803)
		(255,0,0.0803)
		};
		\end{axis}	
	\end{tikzpicture}}
	\end{minipage}
	
		 \vspace{-6pt}
	\begin{minipage}[t]{0.23\textwidth}
	\centering \scalebox{.65}{PSNR=27.42, SSIM=0.8077}
	\end{minipage}%}
	\begin{minipage}[t]{0.23\textwidth}
	\centering \scalebox{.65}{PSNR=29.47, SSIM=0.8628}
	\end{minipage}
	\begin{minipage}[t]{0.23\textwidth}
	 \centering \scalebox{.65}{PSNR=$\mathbf{29.63}$, SSIM=0.8305}
	\end{minipage}
	\begin{minipage}[t]{0.23\textwidth}
	\centering \scalebox{.65}{PSNR=$\mathbf{28.01}$, SSIM=$\mathbf{0.8037}$}
	\end{minipage}\vspace{8pt}	
	
	\begin{minipage}[t]{0.23\textwidth}
	  	\resizebox{0.95\textwidth}{!}{
%	\begin{tikzpicture}
%		\begin{axis}[xmin=0,xmax=255,ymin=0,ymax=255,width=9cm,
%		zticklabel style={
%          	  /pgf/number format/fixed,
%            	/pgf/number format/precision=1,
%          	  /pgf/number format/fixed zerofill
%      			  },
%		height=9.25cm,grid=both, zmin=0.000, zmax=0.45]
%		\addplot3 graphics[points={%
%(26.0186,211.6564,0.16089) => (120.45,166.6225)
%(27.4852,122.8683,0.080012) => (199.7463,93.3487)
%(166.1742,158.6015,0.16516) => (286.0687,189.7087)
%(225.7687,216.1512,0.35858) => (286.0687,345.29)
%		}]
%		{cam_alpha0_trans.png};
%		\end{axis}	
%	\end{tikzpicture}
	\begin{tikzpicture}
	\node[anchor=south west,inner sep=0] at (0,0) {\includegraphics{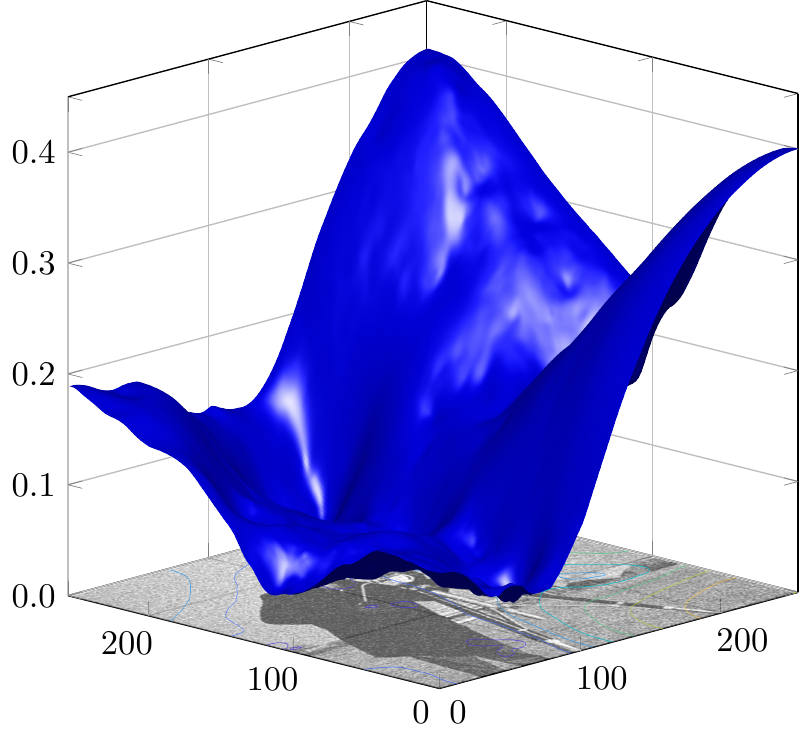}};
	\end{tikzpicture}
	
	}
	\end{minipage}
	\begin{minipage}[t]{0.23\textwidth}
	  	\resizebox{0.95\textwidth}{!}{
%	\begin{tikzpicture}
%		\begin{axis}[xmin=0,xmax=255,ymin=0,ymax=255,width=9cm,
%		zticklabel style={
%          	  /pgf/number format/fixed,
%            	/pgf/number format/precision=1,
%          	  /pgf/number format/fixed zerofill
%      			  },
%		height=9.25cm,grid=both, zmin=0.000, zmax=0.55]
%		\addplot3 graphics[points={%
%(63.5405,255.8837,0.45674) => (113.4237,327.2225)
%(126.9842,234.939,0.24436) => (185.6937,220.825)
%(92.5223,40.6618,0.21013) => (327.2225,151.5662)
%(192.0373,28.2886,0.089363) => (422.5787,105.3937)
%		}]
%		{parrot_alpha0_trans};
%		\end{axis}	
%	\end{tikzpicture}
	\begin{tikzpicture}
	\node[anchor=south west,inner sep=0] at (0,0) {\includegraphics{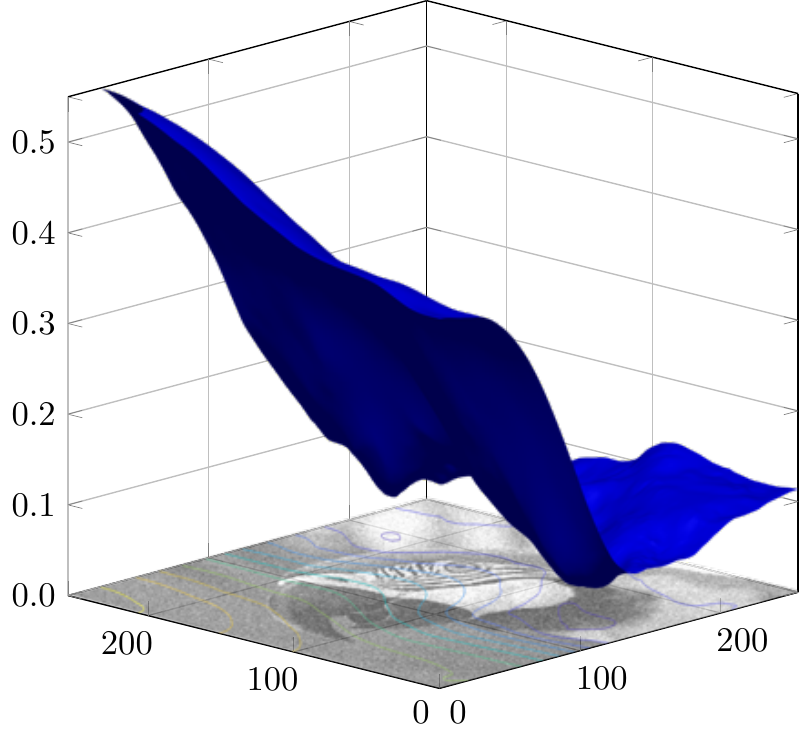}};
	\end{tikzpicture}
	
	}
	\end{minipage}
	\begin{minipage}[t]{0.23\textwidth}
	  	\resizebox{0.95\textwidth}{!}{
%	\begin{tikzpicture}
%		\begin{axis}[xmin=0,xmax=255,ymin=0,ymax=255,width=9cm,
%		zticklabel style={
%          	  /pgf/number format/fixed,
%            	/pgf/number format/precision=1,
%          	  /pgf/number format/fixed zerofill
%      			  },
%		height=9.25cm,grid=both, zmin=0.000, zmax=0.4]
%		\addplot3 graphics[points={%
%(233.3628,218.9174,0.34764) => (290.0838,369.38)
%(30.8767,232.3335,0.12884) => (106.3975,161.6037)
%(35.6826,5.1833,0.20971) => (310.1587,173.6487)
%(65.1622,99.8775,0.10379) => (251.9412,121.4537)
%		}]
%		{orestis_alpha0_trans};
%		\end{axis}	
%	\end{tikzpicture}
		\begin{tikzpicture}
	\node[anchor=south west,inner sep=0] at (0,0) {\includegraphics{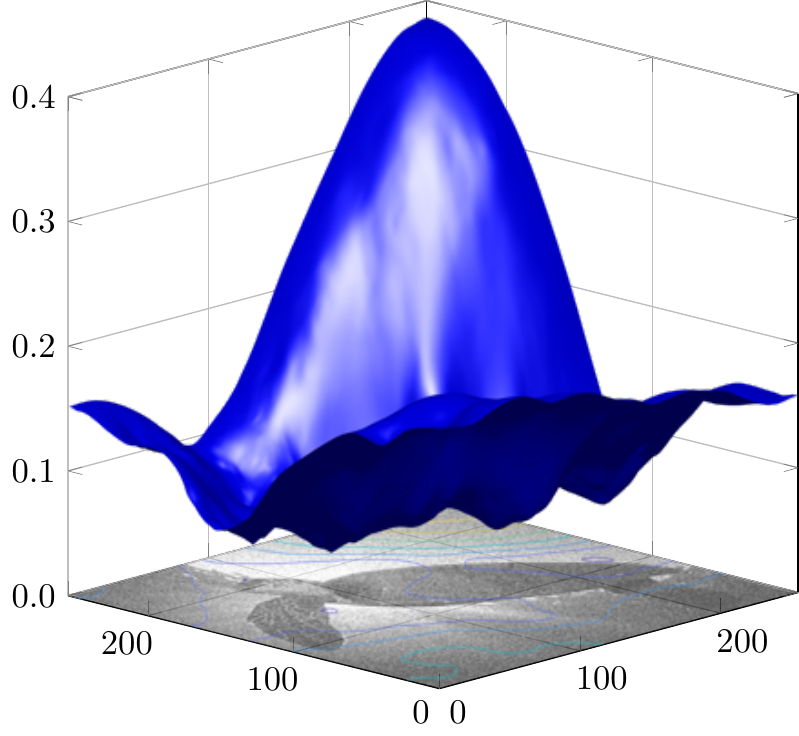}};
	\end{tikzpicture}

	}
	\end{minipage}
	\begin{minipage}[t]{0.23\textwidth}
	  	\resizebox{0.95\textwidth}{!}{
%	\begin{tikzpicture}
%		\begin{axis}[xmin=0,xmax=255,ymin=0,ymax=255,width=9cm,
%		zticklabel style={
%          	  /pgf/number format/fixed,
%            	/pgf/number format/precision=1,
%          	  /pgf/number format/fixed zerofill
%      			  },
%		height=9.25cm,grid=both, zmin=0.000, zmax=0.25]
%		\addplot3 graphics[points={%
%(23.6746,240.2223,0.11629) => (93.3487,204.765)
%(187.6217,190.7314,0.096011) => (276.0312,206.7725)
%(101.2141,52.4808,0.055868) => (324.2113,108.405)
%(238.5958,4.74,0.19785) => (482.8037,300.1212)
%		}]
%		{hatchling_alpha0_trans};
%		\end{axis}	
%	\end{tikzpicture}
		\begin{tikzpicture}
	\node[anchor=south west,inner sep=0] at (0,0) {\includegraphics{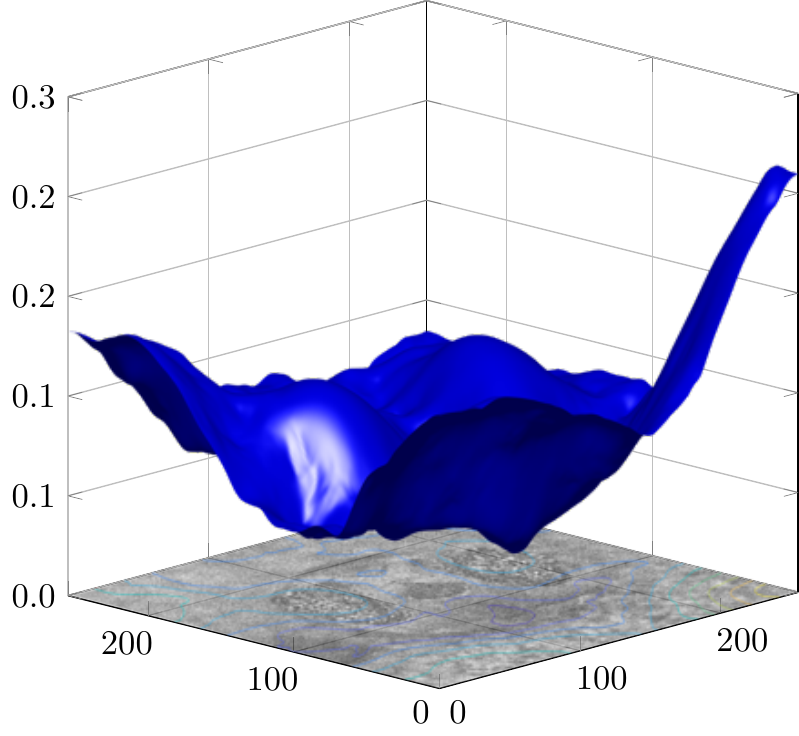}};
	\end{tikzpicture}
	}
	\end{minipage}	
	
	 \vspace{-6pt}
	\begin{minipage}[t]{0.23\textwidth}
	\centering \scalebox{.65}{PSNR=$\mathbf{27.44}$, SSIM=$\mathbf{0.8098}$}
	\end{minipage}
	\begin{minipage}[t]{0.23\textwidth}
	\centering \scalebox{.65}{PSNR=$\mathbf{29.50}$, SSIM=$\mathbf{0.8640}$}
	\end{minipage}
	\begin{minipage}[t]{0.23\textwidth}
	 \centering \scalebox{.65}{PSNR=$\mathbf{29.63}$, SSIM=$\mathbf{0.8310}$}
	\end{minipage}
	\begin{minipage}[t]{0.23\textwidth}
	\centering \scalebox{.65}{PSNR=27.96, SSIM=0.8012}
	\end{minipage}\vspace{8pt}	

	\begin{minipage}[t]{0.23\textwidth}
	\includegraphics[width=0.95\textwidth, trim={0cm 0cm 0cm 0cm},clip]{parrot_wTGVpd}
	\end{minipage}
	\begin{minipage}[t]{0.23\textwidth}
	\includegraphics[width=0.95\textwidth, trim={0cm 0cm 0cm 0cm},clip]{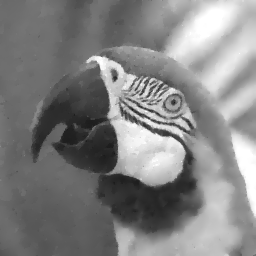}
	\end{minipage}
	\begin{minipage}[t]{0.23\textwidth}
	\includegraphics[width=0.95\textwidth, trim={0cm 3cm 6.0cm 3.0cm},clip]{parrot_wTGVpd}
	\end{minipage}
	\begin{minipage}[t]{0.23\textwidth}
	\includegraphics[width=0.95\textwidth,  trim={0cm 3.0cm 6.0cm 3.0cm},clip]{parrot_wTGVpd_spatiala0}
	\end{minipage}
	 \vspace{-6pt}
	\begin{minipage}[t]{0.23\textwidth}
	\centering \small{spatial $\alpha_{1}$, scalar $\alpha_{0}$}
	\end{minipage}
	\begin{minipage}[t]{0.23\textwidth}
	\centering \small{spatial $\alpha_{1}$, spatial $\alpha_{0}$}
	\end{minipage}
	\begin{minipage}[t]{0.23\textwidth}
	 \centering \small{spatial $\alpha_{1}$, scalar $\alpha_{0}$ \\(detail)}
	\end{minipage}
	\begin{minipage}[t]{0.23\textwidth}
	\centering \small{spatial $\alpha_{1}$, spatial $\alpha_{0}$ \\(detail)}
	\end{minipage}\vspace{8pt}		
	
\caption{Experiments with optimizing over a  spatially varying $\alpha_{0}$. Top row: the automatically computed scalar parameters $\alpha_{0}$, that correspond to the images of the last row of Figure \ref{fig:weightedTGV_01}. Middle row: the automatically computed spatially varying parameters $\alpha_{0}$, where $\alpha_{1}$ has been kept fixed (last row of Figure \ref{weights_contour}). 
The weight $\alpha_{0}$ is adapted to piecewise constant parts having there large values and hence promoting TV like behaviour, see for instance the parrot image at the last row.  On the contrary $\alpha_{0}$ has low values in piecewise smooth parts promoting a TGV like behaviour reducing the staircasing.}	
\label{fig:alpha0}
\end{figure}
%}

In Figure \ref{fig:alpha0} we depict the computed spatially varying parameters $\alpha_{0}$ as well as the corresponding PSNR and SSIM values. Observe that the shape of $\alpha_{0}$ is  different to the one of $\alpha_{1}$, compare the last row of Figure \ref{weights_contour} to the second row of Figure \ref{fig:alpha0}. This implies that a non-constant ratio of $\alpha_{0}/\alpha_{1}$ is preferred throughout the image domain.  Secondly, by spatially varying $\alpha_{0}$ we only get a  slight improvement with respect to PSNR and SSIM in all images, apart from the last one. However, it is interesting to observe the spatial adaptation of $\alpha_{0}$ with respect to piecewise constant versus piecewise smooth areas. The values of $\alpha_{0}$ are high in large piecewise constant areas, like the background of cameraman, the left area of the parrot image, as well as the top-right corner of the turtle image. This is  not so surprising as large values of $\alpha_{0}$ imply a large ratio $\alpha_{0}/\alpha_{1}$ and  a promotion of TV like behaviour in those areas. We can observe this in more detail at the parrot image, see last row of Figure \ref{fig:alpha0}. On the contrary, the values of $\alpha_{0}$ are kept small in piecewise smooth areas like  the right part of the parrot image and the sun rays around the turtle's body. This results in low ratio $\alpha_{0}/\alpha_{1}$  and thus to a more TGV like behaviour, reducing the staircasing effect. This is another indication of the fact that by minimizing the statistics-based upper level objective one is able not only to better preserve detailed areas  but also to finely adjust the TGV parameters such that the staircasing is reduced.

\section{Conclusion}

In this work we have adapted the bilevel optimization framework of \cite{hintermuellerPartI, hintermuellerPartII} for automatically computing spatially dependent regularization parameters for the TGV regularizer. For that we first examined two variants of the TGV regularization problem establishing  rigorous dualization frameworks that form the basis for their algorithmic treatment via  Newton methods. We showed that the bilevel optimization framework with the statistics/localized residual based upper level objective is able to automatically produce spatially varying parameters that not only adapt to the level of detail in the image but also reduce the staircasing effect. 

Future continuation of this work includes adaptation of the bilevel TGV framework for advanced inverse problems tasks, i.e., Magnetic Resonance Imaging (MRI) and Positron Emission Tomography (PET) reconstruction as well as in multimodal medical imaging problems where structural TV based regularizers (edge aligning) have been suggested. Adaptation of the framework for different noise distributions e.g. Poisson, Salt \& Pepper as well as combination of those  \cite{calatronimixed,CalatroniPapafitsoros}, should also be investigated. A fine structural analysis of the weighted TGV regularized solutions in the spirit of \cite{mine_spatial, jalalzai2014discontinuities} would be also of interest.

\subsection*{Acknowledgements}  This work is funded by the Deutsche Forschungsgemeinschaft (DFG, German Research Foundation) under Germany's Excellence Strategy -- The Berlin Mathematics Research Center MATH+ (EXC-2046/1, project ID: 390685689). M.H and K.P. acknowledge support of Institut Henri Poincar\'e (UMS 839 CNRS-Sorbonne Universit\'e), and LabEx CARMIN (ANR-10-LABX-59-01). H.S.  acknowledges the financial support of Alexander von Humboldt Foundation. This work is partially supported by NSF grant  DMS-2012391.

\appendix

\section{Proof of Proposition \ref{lbl:weighted_TGV_Wdiv}}\label{sec:app.proof}
\begin{proof}
The proof follows \cite[Proposition 3.3]{TGV_decompression_part1}. Denote by $C_{\balpha}$, $K_{\balpha}$ the following convex sets 
\begin{align}
C_{\balpha}&=\left \{\di^{2}\phi:\;  \phi\in C_{c}^{\infty}(\om,\mathcal{S}^{d\times d}),\; |\phi(x)|_{r}\le \alpha_{0}(x),\; |\di\phi(x)|_{r}\le \alpha_{1}(x), \text { for all }x\in\om\right \},\\
K_{\balpha}&=\left \{\di^{2}p:\;  p  \in  W_{0}^{d}(\di^{2};\om),\; |p(x)|_{r}\le \alpha_{0}(x),\; |\di p (x)|_{r}\le \alpha_{1}(x), \text { for a.e. }x\in\om\right \}.\label{lbl:K_balpha}
\end{align}
It suffices to show that 
\begin{equation}
\overline{C_{\balpha}}^{L^{d}(\om)}=K_{\balpha}.\label{density_weighted_TGV}
\end{equation}
We first show that $K_{\balpha}$ is closed in $L^{d}(\om)$. Let $g\in K_{\balpha}$ and assume that there exists $(p_{n})_{n\in\NN}\subset W_{0}^{d}(\di^{2};\om)$ where every $p_{n}$ satisfies the convex constraints and $\di^{2} p_{n}\to g$ in $L^{d}(\om)$. By boundedness of $\alpha_{0},\alpha_{1}$ we have that there exist $h_{0}\in L^{d}(\om,\mathcal{S}^{d\times d})$, $h_{1}\in L^{d}(\om,\RR^{d})$ and a subsequence of $(p_{n_{k}})_{k\in\NN}$ such that 
\[p_{n_{k}} \rightharpoonup h_{0} \quad \text{and} \quad \di p_{n_{k}} \rightharpoonup h_{1},\]
in $L^{d}(\om)$ and $L^{d}(\om,\RR^{d})$ respectively. Using that, we have for every 
%$\phi\in C_{c}^{\infty}(\om;\mathcal{S}^{d\times d})$
$\phi\in C_{c}^{\infty}(\om,\RR^{d})$
\begin{equation}\label{h0_div}
\int_{\om} \nabla \phi \cdot h_{0}\,dx=\lim_{k\to\infty} \int_{\om} \nabla \phi  \cdot p_{n_{k}} \,dx=-\lim_{k\to\infty} \int_{\om} \phi \cdot \di p_{n_{k}}\,dx= - \int_{\om} \phi \cdot h_{1}\,dx,
\end{equation}
thus $h_{1}=\di h_{0}$. Similarly we derive that $g=\di h_{1}= \di^{2} h_{0}$ and hence $h_{0}\in W^{d}(\di^{2};\om)$. Finally note that the set 
\[\left \{ (h,\di h, \di^{2} h):\; h\in W_{0}^{d}(\di^{2};\om), \; |h(x)|_{r}\le \alpha_{0}(x),\; |\di h (x)|_{r}\le \alpha_{1}(x), \text { for a.e. }x\in\om  \right \},\]
is a norm-closed and convex subset of $ L^{d}\left(\om,(\mathcal{S}^{d\times d}\times \RR^{d}\times \RR)\right)$ and hence weakly closed. Since  $(p_{n_{k}},\di p_{n_{k}}, \di^{2}p_{n_{k}})_{k\in\NN}$ belongs to that set,  converging weakly to $(h_{0},\di h_{0}, \di^{2}h_{0})$ we get that the latter also belongs there. Thus, $K_{\balpha}$ is closed in $L^{d}(\om)$ and since  $C_{\balpha}\subset K_{\balpha}$, we get  $\overline{C_{\balpha}}^{L^{d}(\om)}\subset K_{\balpha}$.

It remains to show the other direction, i.e.,  $K_{\balpha}\subset \overline{C_{\balpha}}^{L^{d}(\om)}$. Towards that, note first that the functional $\tgv_{\balpha}^{2}(\om): L^{d/d-1}(\om)\to \overline{\RR}$, can also be written as \[\tgv_{\balpha}^{2}(u)=\mathcal{I}_{C_{\balpha}}^{\ast}(u).\]
Using  the convexity of $C_{\balpha}$ one gets
\[\tgv_{\balpha}^{2^{\,\ast}}(v)=\mathcal{I}_{C_{\balpha}}^{\ast\ast}(v)=\mathcal{I}_{\overline{C_{\balpha}}^{L^{d}(\om)}}(v).\] 
Secondly, note that due to the lower bounds on $\alpha_{0},\alpha_{1}$, for $u\in L^{d/d-1}(\om)$, we have that $\tgv_{\balpha}^{2}(u)<\infty$ if and only if $u\in \bv(\om).$ Indeed this holds from the equivalence of the (scalar) $\|\cdot\|_{\mathrm{BGV}}$ with $\|\cdot\|_{\bv(\om)}$ and from the estimate
\[\tgv_{\underline{\alpha},\underline{\alpha}}^{2}(u)\le \tgv_{\balpha}^{2}(u)\le \|\alpha_{1}\|_{\infty} \tv (u),\]
for every $u\in L^{d/d-1}(\om)$. This means that if for  $\di^{2}p\in K_{\balpha}$ it holds
\begin{equation}\label{ineq_for_bv}
\int_{\om} u\,\di^{2}p\,dx \le \tgv_{\balpha}^{2}(u),\quad \text{for all }u\in \bv(\om), 
\end{equation}
then in fact the inequality \eqref{ineq_for_bv} will hold for every $u\in L^{d/d-1}(\om)$ and thus  
$\tgv_{\balpha}^{2^{\,\ast}}(\di^{2}p)=0$ which implies $\di^{2}p\in  \overline{C_{\balpha}}^{L^{d}(\om)}$. Thus in order to finish the proof it suffices to show \eqref{ineq_for_bv} for every $\di^{2}p\in K_{\balpha}$. In view of Proposition \ref{lbl:prop_weighted_TGV_min} it suffices to show 
\begin{equation}\label{ineq_for_bv_2}
\int_{\om} u\,\di^{2}p\,dx \le \min_{w\in\mathrm{BD}(\om)} \int_{\om} \alpha_{1}\,d |Du-w|_{r^{\ast}}+ \int_{\om}\alpha_{0}\, d|\mathcal{E}w|_{r^{\ast}}
\end{equation}
for all $u\in\bv(\om)$. The first step towards that is to show that for every $w\in \mathrm{BD}(\om)$ and for every $p\in W_{0}^{d}(\di^{2};\om)$ with $|p(x)|_{r}\le \alpha_{0}(x)$ for a.e. $x\in\om$, it holds
\begin{equation}\label{ind_step_1}
\int_{\om}w \, \di p\,dx \le \int_{\om} \alpha_{0}\, d|\mathcal{E}w|_{r^{\ast}}.
\end{equation}
Indeed, note first that from \eqref{ibp_2} and using the H\"older inequality, we get for every $\phi\in C^{\infty}(\overline{\om},\RR^{d})$
\begin{equation}\label{bd_before_strict}
\left |\int_{\om} \phi\cdot \di p\, dx \right| =\left | \int_{\om} p \cdot E\phi \,dx \right |\le \int_{\om} |p|_{r}|E\phi |_{r^{\ast}}\,dx\le \int_{\om}\, \alpha_{0}\,d|E\phi|_{r^{\ast}}.
\end{equation}
Recall now that every $w\in\mathrm{BD}(\om)$ can be strictly approximated by a sequence $(\phi_{n})_{n\in\NN}\subset  C^{\infty}(\overline{\om},\RR^{d})$, that is $\phi_{n}\to w$ in $L^{d/d-1}(\om,\RR^{d})$ and $|E\phi|_{r^{\ast}}(\om)\to |\mathcal{E}w|_{r^{\ast}}(\om)$, see \cite[Proposition 2.10]{TGV_decompression_part1}. Furthermore, using that, along with Reshetnyak's continuity theorem \cite[Theorem 2.39]{AmbrosioBV} we also get that 
\[\int_{\om} \alpha_{0}\, d|E\phi_{n}|_{r^{\ast}}\to \int_{\om} \alpha_{0}\, d|\mathcal{E}w|_{r^{\ast}},\quad\text{as }n\to\infty ..\]
Using that fact, by taking limits in \eqref{bd_before_strict} we obtain \eqref{ind_step_1}. Finally in order to obtain \eqref{ineq_for_bv} let $p\in W_{0}^{d}(\di^{2};\om)$ with $|p(x)|_{r}\le \alpha_{0}(x)$ and $|\di p (x)|_{r}\le\alpha_{1}(x)$ for a.e. $x\in\om$, and let $\phi\in C^{\infty}(\overline{\om},\RR)$. Then by using \eqref{ibp_3} and \eqref{ind_step_1},  we have for every $w\in \mathrm{BD}(\om)$
\begin{align*}
\int_{\om} \phi\,\di^{2}p\,dx
&\le\left | \int_{\om} \nabla \phi\cdot \di p\,dx \right |=\left | \int_{\om} (\nabla \phi -w)\cdot \di p\,dx + \int_{\om}w\,\di p\,dx \right |\\
&\le  \int_{\om} |\nabla \phi-w|_{r^{\ast}}|\di p|_{r}\,dx+ \int_{\om} \alpha_{0} \,d|\mathcal{E}w|_{r^{\ast}}\\
&\le \int_{\om} \alpha_{1} |\nabla \phi-w|_{r^{\ast}}\,dx+ \int_{\om} \alpha_{0} \,d|\mathcal{E}w|_{r^{\ast}}.
\end{align*}
Similarly as before given $u\in\bv(\om)$ and $w\in \mathrm{BD}(\om)$,  there exists a sequence $(\phi_{n})_{n\in\NN}\subset C^{\infty}(\overline{\om},\RR)$ such that $\phi_{n}\to u$ in $L^{d/d-1}(\om)$ and $|\nabla \phi_{n}-w|(\om)\to |Du-w|(\om)$, see again \cite[Proposition 2.10]{TGV_decompression_part1}. Using again the Reshetnyak's continuity theorem and taking limits  we get that for every $w\in \mathrm{BD}(\om)$
\[\int_{\om}u\,\di^{2}p\,dx\le \int_{\om} \alpha_{1}\,d |Du-w|_{r^{\ast}}+ \int_{\om}\alpha_{0}\, d|\mathcal{E}w|_{r^{\ast}}.
\] 
By taking the minimum over $w\in\mathrm{BD}(\om)$,  we obtain \eqref{ineq_for_bv}.

\end{proof}

\section{}\label{sec:app}
We provide here a few more details about the discrete differential operators involved in the implementation of Algorithm \ref{TGV_Newton}. Recall that  for the discrete gradient and divergence we have $\nabla: W_{h}\to V_{h}$ and $\di: V_{h}\to W_{h}$, where $\nabla=-\di^{\top}$ holds.
For $p\in V_{h}$, the divergence is defined  as
\begin{align*}
(\di p)_{i,j}^{1}
=\frac{1}{h} (p_{i,j}^{11}-p_{i-1,j}^{11}+p_{i,j}^{12}-p_{i,j-1}^{12}), \quad 
(\di p)_{i,j}^{2}
=\frac{1}{h} (p_{i,j}^{12}-p_{i-1,j}^{12}+p_{i,j}^{22}-p_{i,j-1}^{22}), \quad (i,j)\in \om_{h}.
\end{align*}
Here we set  zero values at the ghost points. For the second-order gradient $\nabla^{2}u:U_{h}\to V_{h}$ we have $\nabla^{2}=(D_{xx} u, D_{xy} u, D_{yy} u)$, where $D_{xx}, D_{xy}, D_{xy}$ are operators $U_{h}\to V_{h}$ and are defined using the following stencils with zero values at ghost points:
%\begin{center}
\begin{figure}[!h]
\begin{tikzpicture}
 \node at (3.6,0.2) {$D_{xy}$};
\node at (4.6,0.2) {$\frac{1}{h^{2}}\;\times$};
 \node[shape=circle,draw,  minimum size=13pt, inner sep=0pt, fill=white]  at (5.4,0.2) {-\tiny{$\tfrac{1}{2}$}};
 \node[shape=circle,draw, minimum size=13pt, inner sep=0pt, fill=mygray]  at (6.0,0.2) {\tiny{1}};
 \node[shape=circle,draw, minimum size=13pt, inner sep=0pt, fill=white]  at (6.6,0.2) {\tiny{-$\tfrac{1}{2}$}};
 \node[shape=circle,draw,  minimum size=13pt, inner sep=0pt, fill=white]  at (5.4,0.8) {\tiny{$\tfrac{1}{2}$}};
 \node[shape=circle,draw,  minimum size=13pt, inner sep=0pt, fill=white]  at (6.0,0.8) {\tiny{-$\tfrac{1}{2}$}};
 \node[shape=circle,draw, minimum size=13pt, inner sep=0pt, fill=white]  at (6.6,-0.4) {\tiny{$\tfrac{1}{2}$}};
 \node[shape=circle,draw, minimum size=13pt, inner sep=0pt, fill=white]  at (6.0,-0.4) {\tiny{-$\tfrac{1}{2}$}};
   
 \node at (8.4,0.2) {$D_{xx}$};
\node at (9.4,0.2) {$\frac{1}{h^{2}}\;\times$};
 \node[shape=circle,draw,  minimum size=13pt, inner sep=0pt, fill=mygray]  at (10.2,0.2) {\tiny{-$2$}};
 \node[shape=circle,draw,  minimum size=13pt, inner sep=0pt, fill=white]  at (10.2,0.8) {\tiny{$1$}};
 \node[shape=circle,draw,  minimum size=13pt, inner sep=0pt, fill=white]  at (10.2,-0.4) {\tiny{$1$}};
     
  \node at (12.0,0.2) {$D_{yy}$};
\node at (13.0,0.2) {$\frac{1}{h^{2}}\;\times$};
 \node[shape=circle,draw,  minimum size=13pt, inner sep=0pt, fill=white]  at (13.8,0.2) {\tiny{$1$}};   
  \node[shape=circle,draw,  minimum size=13pt, inner sep=0pt, fill=mygray]  at (14.4,0.2) {\tiny{-$2$}};  
  \node[shape=circle,draw,  minimum size=13pt, inner sep=0pt, fill=white]  at (15.0,0.2) {\tiny{$1$}};      
 \end{tikzpicture}
 %\caption{Symmetric finite difference stencil for the second order derivative operators}
 %\label{Stencil_second_order}
 \end{figure}
%\end{center}

\noindent
 Note that the use of symmetric differences for the mixed derivative results in a symmetric matrix representing $Dxy$. 
 All the resulting operators $D_{xx}, D_{xy}, D_{yy}$ are then symmetric. For the discrete second divergence $\di^{2}:V_{h}\to U_{h}$, we have $\di^{2}p=D_{xx}p^{11}+2D_{xy}p^{12}+D_{yy}p^{22}$. The vector bi-Laplacian is an operator $V_{h}\to V_{h}$ where $p\mapsto (\Delta^{2}p^{11}, \Delta^{2}p^{12}, \Delta^{2}p^{22})$ with $\Delta^{2}=D_{xxxx}+D_{yyyy}+D_{xxyy}+D_{yyxx}$.  The resulting stencil for $\Delta^{2}$ is as shown below. 
\begin{figure}[!h]
\begin{tikzpicture} 
  \node at (12.6,0.2) {$\Delta^{2}$};
  \node at (13.6,0.2) {$\frac{1}{h^{4}}\;\times$};
  \node[shape=circle,draw, minimum size=13pt, inner sep=0pt, fill=white]  at (14.4,0.2) {\tiny{1}};
  \node[shape=circle,draw, minimum size=13pt, inner sep=0pt, fill=white]  at (15.0,0.2) {\tiny{-8}};
  \node[shape=circle,draw, minimum size=13pt, inner sep=0pt, fill=mygray]  at (15.6,0.2) {\tiny{20}};
  \node[shape=circle,draw, minimum size=13pt, inner sep=0pt, fill=white]  at (16.2,0.2) {\tiny{-8}};
  \node[shape=circle,draw, minimum size=13pt, inner sep=0pt, fill=white]  at (16.8,0.2) {\tiny{1}};
   \node[shape=circle,draw, minimum size=13pt, inner sep=0pt, fill=white]  at (15.0,-0.4) {\tiny{2}};
  \node[shape=circle,draw, minimum size=13pt, inner sep=0pt, fill=white]  at (15.6,-0.4) {\tiny{-8}};
  \node[shape=circle,draw, minimum size=13pt, inner sep=0pt, fill=white]  at (16.2,-0.4) {\tiny{2}};
   \node[shape=circle,draw, minimum size=13pt, inner sep=0pt, fill=white]  at (15.0,0.8) {\tiny{2}};
  \node[shape=circle,draw, minimum size=13pt, inner sep=0pt, fill=white]  at (15.6,0.8) {\tiny{-8}};
  \node[shape=circle,draw, minimum size=13pt, inner sep=0pt, fill=white]  at (16.2,0.8) {\tiny{2}};
    \node[shape=circle,draw, minimum size=13pt, inner sep=0pt, fill=white]  at (15.6,-1) {\tiny{1}};
        \node[shape=circle,draw, minimum size=13pt, inner sep=0pt, fill=white]  at (15.6,1.4) {\tiny{1}};
 \end{tikzpicture}
 %\caption{Finite difference stencils that constitute the discrete bi-Laplacian $\Delta^{2}$}
 %\label{Stencil_biLap}
 \end{figure}
 
 \noindent
 In order to reflect the boundary conditions of $H_{0}^{2}(\om,\mathcal{S}^{2\times 2})$, the bi-Laplacian  must be endowed with  both zero Neumann and zero Dirichlet boundary conditions.  Again this is enforced by considering any ghost points (up to two of them in the boundary), to have zero value. Finally we discuss the dicretization of the operator $\nabla^{2}\di^{2}: V_{h}\to V_{h}$, which is equal to
 \begin{align*}
 (\nabla^{2}\di^{2}p)^{11}
 &= Dxxxx p^{11}+ 2Dxxxy p^{12}+ Dxxyy p^{22},\\
  (\nabla^{2}\di^{2}p)^{12}
&= Dxyxx p^{11} + 2Dxyxy p^{12} + Dxyyy p^{22},\\
  (\nabla^{2}\di^{2}p)^{22}
 &=Dyyxx p^{11}+ 2Dyyxy p^{12} + Dyyyy p^{22}, 
 \end{align*}
 where in fact it  holds $Dxxxy=Dxyxx$, $Dxxyy=Dxyxy=Dyyxx$ and $Dxyyy=Dyyxy$. For these fourth order discretized differential operators we use the stencils 
 \begin{figure}[!h]
\begin{tikzpicture}
 \node at (0.0,0.2) {$D_{xxxx}$};
   \node at (1.2,0.2) {$\frac{1}{h^{4}}\;\times$};
  \node[shape=circle,draw, minimum size=13pt, inner sep=0pt, fill=white]  at (2,-1) {\tiny{1}};
 \node[shape=circle,draw, minimum size=13pt, inner sep=0pt, fill=white]  at (2,-0.4) {\tiny{-4}};
 \node[shape=circle,draw, minimum size=13pt, inner sep=0pt, fill=mygray]  at (2,0.2) {\tiny{6}};
 \node[shape=circle,draw, minimum size=13pt, inner sep=0pt, fill=white]  at (2,0.8) {\tiny{-4}};
 \node[shape=circle,draw, minimum size=13pt, inner sep=0pt, fill=white]  at (2,1.4) {\tiny{1}};

\node at (3.7,0.2) {$D_{yyyy}$};
   \node at (4.9,0.2) {$\frac{1}{h^{4}}\;\times$};
 \node[shape=circle,draw, minimum size=13pt, inner sep=0pt, fill=white]  at (5.7,0.2) {\tiny{1}};
 \node[shape=circle,draw, minimum size=13pt, inner sep=0pt, fill=white]  at (6.3,0.2) {\tiny{-4}};
 \node[shape=circle,draw, minimum size=13pt, inner sep=0pt, fill=mygray]  at (6.9,0.2) {\tiny{6}};
 \node[shape=circle,draw, minimum size=13pt, inner sep=0pt, fill=white]  at (7.5,0.2) {\tiny{-4}};
 \node[shape=circle,draw, minimum size=13pt, inner sep=0pt, fill=white]  at (8.1,0.2) {\tiny{1}};

 %\node at (10.6,0.0) {$D_{xxyy}$};
  \node at (10,0.2) {$D_{yyxx}$};
     \node at (11,0.2) {$\frac{1}{h^{4}}\;\times$};
  \node[shape=circle,draw, minimum size=13pt, inner sep=0pt, fill=white]  at (11.8,0.2) {\tiny{-2}};
 \node[shape=circle,draw, minimum size=13pt, inner sep=0pt, fill=mygray]  at (12.4,0.2) {\tiny{4}};
 \node[shape=circle,draw, minimum size=13pt, inner sep=0pt, fill=white]  at (13.0,0.2) {\tiny{-2}};
  \node[shape=circle,draw, minimum size=13pt, inner sep=0pt, fill=white]  at (11.8,-0.4) {\tiny{1}};
 \node[shape=circle,draw, minimum size=13pt, inner sep=0pt, fill=white]  at (12.4,-0.4) {\tiny{-2}};
 \node[shape=circle,draw, minimum size=13pt, inner sep=0pt, fill=white]  at (13.0,-0.4) {\tiny{1}};
  \node[shape=circle,draw, minimum size=13pt, inner sep=0pt, fill=white]  at (11.8,0.8) {\tiny{1}};
 \node[shape=circle,draw, minimum size=13pt, inner sep=0pt, fill=white]  at (12.4,0.8) {\tiny{-2}};
 \node[shape=circle,draw, minimum size=13pt, inner sep=0pt, fill=white]  at (13.0,0.8) {\tiny{1}};
 
 \end{tikzpicture}
 \vspace{7pt}
 %\caption{Finite difference stencils that constitute the discrete bi-Laplacian $\Delta^{2}$}
 %\label{Stencil_biLap}
 %\end{figure}
 
%and \\

%\begin{figure}[!h]
\begin{tikzpicture}
 \node at (0.3,0.2) {$D_{xxxy}$};
 \node at (1.5,0.2) {$\frac{1}{h^{4}}\;\times$};
 \node[shape=circle,draw,  minimum size=13pt, inner sep=0pt, fill=white]  at (2.3,0.2) {\tiny{$\tfrac{3}{2}$}};
 \node[shape=circle,draw,  minimum size=13pt, inner sep=0pt, fill=mygray]  at (2.9,0.2) {\tiny{-$3$}};
 \node[shape=circle,draw,  minimum size=13pt, inner sep=0pt, fill=white]  at (3.5,0.2) {\tiny{$\tfrac{3}{2}$}};
 \node[shape=circle,draw,  minimum size=13pt, inner sep=0pt, fill=white]  at (2.3,0.8) {\tiny{-$\tfrac{3}{2}$}};
 \node[shape=circle,draw,  minimum size=13pt, inner sep=0pt, fill=white]  at (2.9,0.8) {\tiny{$2$}};
 \node[shape=circle,draw,  minimum size=13pt, inner sep=0pt, fill=white]  at (3.5,0.8) {\tiny{-$\tfrac{1}{2}$}};
 \node[shape=circle,draw,  minimum size=13pt, inner sep=0pt, fill=white]  at (2.3,-0.4) {\tiny{-$\tfrac{1}{2}$}};
 \node[shape=circle,draw,  minimum size=13pt, inner sep=0pt, fill=white]  at (2.9,-0.4) {\tiny{$2$}};
 \node[shape=circle,draw,  minimum size=13pt, inner sep=0pt, fill=white]  at (3.5,-0.4) {\tiny{-$\tfrac{3}{2}$}};
 \node[shape=circle,draw,  minimum size=13pt, inner sep=0pt, fill=white]  at (2.3,1.4) {\tiny{$\tfrac{1}{2}$}};
 \node[shape=circle,draw,  minimum size=13pt, inner sep=0pt, fill=white]  at (2.9,1.4) {\tiny{-$\tfrac{1}{2}$}};
  \node[shape=circle,draw,  minimum size=13pt, inner sep=0pt, fill=white]  at (2.9,-1)  {\tiny{-$\tfrac{1}{2}$}};
  \node[shape=circle,draw,  minimum size=13pt, inner sep=0pt, fill=white]  at (3.5,-1)  {\tiny{$\tfrac{1}{2}$}};
  
 \node at (5.4,0.2) {$D_{xyyy}$};
  \node at (6.6,0.2) {$\frac{1}{h^{4}}\;\times$};
 \node[shape=circle,draw,  minimum size=13pt, inner sep=0pt, fill=white]  at (7.4,0.2) {\tiny{-$\tfrac{1}{2}$}};
 \node[shape=circle,draw,  minimum size=13pt, inner sep=0pt, fill=white]  at (8.0,0.2) {\tiny{$2$}};
 \node[shape=circle,draw,  minimum size=13pt, inner sep=0pt, fill=mygray]  at (8.6,0.2) {\tiny{-$3$}};
 \node[shape=circle,draw,  minimum size=13pt, inner sep=0pt, fill=white]  at (9.2,0.2) {\tiny{$2$}};
 \node[shape=circle,draw,  minimum size=13pt, inner sep=0pt, fill=white]  at (9.8,0.2) {\tiny{-$\tfrac{1}{2}$}};
  \node[shape=circle,draw,  minimum size=13pt, inner sep=0pt, fill=white]  at (7.4,0.8) {\tiny{$\tfrac{1}{2}$}};
 \node[shape=circle,draw,  minimum size=13pt, inner sep=0pt, fill=white]  at (8.0,0.8) {\tiny{-$\tfrac{3}{2}$}};
 \node[shape=circle,draw,  minimum size=13pt, inner sep=0pt, fill=white]  at (8.6,0.8) {\tiny{$\tfrac{3}{2}$}};
 \node[shape=circle,draw,  minimum size=13pt, inner sep=0pt, fill=white]  at (9.2,0.8) {\tiny{-$\tfrac{1}{2}$}};
  \node[shape=circle,draw,  minimum size=13pt, inner sep=0pt, fill=white]  at (8.0,-0.4) {\tiny{-$\tfrac{1}{2}$}};
 \node[shape=circle,draw,  minimum size=13pt, inner sep=0pt, fill=white]  at (8.6,-0.4) {\tiny{$\tfrac{3}{2}$}};
 \node[shape=circle,draw,  minimum size=13pt, inner sep=0pt, fill=white]  at (9.2,-0.4) {\tiny{-$\tfrac{3}{2}$}};
 \node[shape=circle,draw,  minimum size=13pt, inner sep=0pt, fill=white]  at (9.8,-0.4) {\tiny{$\tfrac{1}{2}$}};  
 \end{tikzpicture}
 %\caption{Symmetric finite difference stencil for the fourth order derivative operators}
 %\label{Stencil_grad2div2}
 \end{figure}
 %\vspace{3em}
 
 \noindent
We use again the same rule to enforce zero Neumann and  Dirichlet boundary conditions. 
Note that the matrix representing  $\nabla^{2}\di ^{2}$  will not be symmetric due to the factor of $2$ multiplying the terms that correspond to $p^{12}$. That  leads to a non-symmetric linear system corresponding to the equation in Algorithm \ref{TGV_Newton}. However, having a symmetric matrix is desirable as this benefits from efficient iterative solvers for linear systems, e.g., conjugate gradients. A remedy for that is instead of solving for the vector $(p^{11}, p^{12}, p^{22})$ to do so for $(p^{11}, 2p^{12}, p^{22})$. This eliminates the 2-factor in the $p^{12}$ part of the matrix that represents $\nabla^{2}\di^{2}$. In that case the other operators must be modified, for instance the vector bi-Laplacian must take the form $(\Delta^{2}, \frac{1}{2}\Delta^{2}, \Delta^{2})$, and similarly for the other differential operators. The functions $Q_{\delta}$ and $P_{\delta}$ must be also accordingly modified in this case.

\bibliographystyle{amsplain}
\bibliography{kostasbib}
\end{document}